\documentclass[11pt,onecolumn,oneside]{amsart}
\usepackage[a4paper, hmargin={2.8cm, 2.8cm}, vmargin={2.5cm, 2.5cm}]{geometry}

\usepackage{graphicx, color}
\usepackage{enumitem}
\usepackage{amssymb}
\usepackage{amsmath}
\usepackage{amsthm}
\usepackage{mathrsfs}
\usepackage{caption, subcaption}
\usepackage{wrapfig}
\usepackage{mathtools}
\usepackage{bm}
\usepackage{advdate}
\usepackage{url}
\usepackage{accents}
\usepackage{hyperref}
\usepackage{cleveref}

\usepackage{tikz}
\usetikzlibrary{positioning,arrows,chains,matrix,scopes,fit,decorations.markings,decorations,decorations.pathreplacing}
\usetikzlibrary{cd}
\usepackage{ytableau}

\usepackage{indentfirst}

	\newcommand{\A}{\ensuremath{\mathbb{A}}}
	
	\newcommand{\Z}{\ensuremath{\mathbb{Z}}}
	\newcommand{\Q}{\ensuremath{\mathbb{Q}}}
	
	\newcommand{\C}{\ensuremath{\mathbb{C}}}

	\newcommand{\Gm}{\ensuremath{\mathbb{G}_m}}

	\newcommand{\Hom}{\ensuremath{\operatorname{Hom}}}
	
	\newcommand{\id}{\ensuremath{\operatorname{id}}}

	\newcommand{\spec}{\ensuremath{\operatorname{Spec}}}

	\newcommand{\im}{\ensuremath{\operatorname{im}}}
		
    \newcommand{\oo}{\ensuremath{\mathcal{O}}}

\newcommand{\sym}{\ensuremath{\operatorname{Sym}}}

\addtocounter{page}{0}
\theoremstyle{plain}
\newtheorem{theorem}{Theorem}[section]
\newtheorem{corollary}{Corollary}[section]
\newtheorem{lemma}{Lemma}[section]
\newtheorem{prop}{Proposition}[section]
\theoremstyle{definition}
\newtheorem{example}{Example}[section]
\newtheorem{definition}{Definition}[section]
\newtheorem{remark}{Remark}[section]

\newtheorem{rec}{Recall}[section]
\newtheorem{set}{Setup}[section]

\title{Virtual K-theoretic invariants of the nested Hilbert scheme on $\C^2$
} 
\author{Felix Minddal }

\begin{document}

\begin{abstract}
    We construct a nested version of the non-commutative Hilbert scheme and embed the nested Hilbert scheme of points on $\C^n$ as the commutativity locus. In the $\C^2$-case, we exhibit this locus as the zero locus of two different sections of bundles and use this description to equip the nested Hilbert scheme of points with a perfect obstruction theory equivalent to that of Gholampour, Sheshmani and Yau. We study the torus equivariant pushforward of the virtual structure sheaf under the map of nested Hilbert schemes forgetting the largest subscheme of the nesting. Using a map of the bundles on the non-commutative Hilbert scheme, we prove that this pushforward is a twist of the virtual structure sheaf on the lower level. Using localization, we show that the twist is by a constant class with values corresponding to the equivariant Euler characteristic of a tautological class of the Hilbert scheme of points. From this, we derive a closed formula for the multivariate generating series of the equivariant virtual Euler characteristic of the nested Hilbert scheme of points. 
\end{abstract}
\maketitle
\tableofcontents
\section{Introduction}
In the theory of virtual classes on the Hilbert scheme of points, the non-commutative Hilbert scheme introduced by Nori \cite{Nori1977Appendix} has played a central role. The description of $\operatorname{Hilb}^d(\A^3)$ as a critical locus of a potential $\operatorname{ncHilb}^{d}(\A^3)$ was used by Behrend, Bryan and Szendrői \cite{behrend2013motivic} in the study of motivic Donaldson-Thomas invariants. Okounkov \cite{Okounkov2017KTheory} used the same description to prove the Nekrasov formula for the generating series of the twisted virtual Euler characteristic on $\operatorname{Hilb}^{d}(\A^3)$.
\begin{theorem}
    (\cite{Okounkov2017KTheory}).
    \[\sum_{d\geq 0}\chi_{T}(\operatorname{Hilb}^{d}(\A^3),\hat{\mathcal{O}}^{\operatorname{vir}})(-q)^d=\operatorname{PE}\left(\frac{[t_1t_2][t_1t_3][t_2t_3]}{[\mathfrak{t}^{\frac{1}{2}}q][\mathfrak{t}^{\frac{1}{2}}q^{-1}][t_1][t_2][t_3]}\right)\]
where $[x]=x^{1/2}-x^{-1/2}$, $\mathfrak{t}=t_1t_2t_3$ and
\[\operatorname{PE}(f(q;t_1,t_2,t_3))=\operatorname{exp}\left(\sum_{n\geq 1}\frac{f(q^n;t_1^n,t_2^n,t_3^n)}{n}\right).\]
\end{theorem}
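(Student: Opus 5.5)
Following Okounkov, the plan is to realize $\operatorname{Hilb}^d(\A^3)$ as the critical locus $\{dW=0\}$ inside the smooth non-commutative Hilbert scheme $\operatorname{ncHilb}^d(\A^3)$ of triples of $d\times d$ matrices with a cyclic vector, modulo $GL_d$, with potential $W=\operatorname{tr}(X_1[X_2,X_3])$. This gives a symmetric perfect obstruction theory. The symmetrized virtual structure sheaf $\hat{\mathcal{O}}^{\operatorname{vir}}=\mathcal{O}^{\operatorname{vir}}\otimes K_{\operatorname{vir}}^{1/2}$ is then well defined, after fixing a square root of the virtual canonical bundle. Both sides are computed $T$-equivariantly, with $T=(\C^*)^3$, and the twist $\mathfrak{t}$ enters through the nontrivial weight of $W$. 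The left-hand side becomes a rational function in $t_i^{1/2}$ by localization to the finitely many fixed points, which are indexed by plane partitions $\pi$ with $|\pi|=d$.

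The first step is virtual localization. At a fixed point $\pi$ the obstruction theory is symmetric up to the weight $\mathfrak{t}$, so the virtual tangent space has the form $T^{\operatorname{vir}}_\pi=V_\pi-\mathfrak{t}^{-1}\overline{V_\pi}$. Here $V_\pi=\chi(\mathcal{O},\mathcal{O})-\chi(I_\pi,I_\pi)$ is expressed through the character $Q_\pi=\sum_{(i,j,k)\in\pi}t_1^it_2^jt_3^k$. The contribution of $\pi$ is then $\hat{a}(T^{\operatorname{vir}}_\pi)$, with $\hat a(w)=1/[w]$ extended multiplicatively. This reduces the left-hand side to $\sum_\pi(-q)^{|\pi|}\prod_{w}\frac{[\mathfrak{t}^{1/2}w^{-1}]\cdots}{[w]}$, which is the K-theoretic refined topological vertex.

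Second, I would show that the result has the plethystic form. A direct proof along Okounkov's line is as follows.
\begin{enumerate}
\item Treat $\kappa=\mathfrak{t}$ as an independent variable by enlarging the torus acting on $W$ to the torus preserving it up to scaling. The answer is then a rational function of $t_1,t_2,t_3,\kappa,q$.
\item Use rigidity: take a limit such as $t_3\to 0$ or $\infty$ with $\kappa$ fixed, after showing via the self-duality of $\hat{\mathcal{O}}^{\operatorname{vir}}$ that each coefficient of $q^d$ is a Laurent polynomial in the non-$\kappa$ variables.
\item In the limit, only plane partitions contribute, with the two-dimensional (Nakajima quiver / $\operatorname{Hilb}(\C^2)\times\C$) structure. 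The sum then factorizes into a computation on $\operatorname{Hilb}^n(\C^2)$ with a tautological insertion $\Lambda^\bullet$.
\item Evaluate that computation by the standard $\operatorname{Hilb}(\C^2)$ formula: $\sum_n q^n\chi(\operatorname{Hilb}^n,\Lambda^\bullet_{-m}\mathcal{T})=\operatorname{PE}(\cdots)$, via Macdonald polynomial or Cauchy identity arguments.
\end{enumerate}
An alternative route is a dimensional-reduction/factorization argument that exhibits the generating function as the exponential of a connected ("one-point") series. By factorization of the moduli space over the symmetric product of $\A^3$ (Okounkov's argument), the series is $\operatorname{PE}$ of a function $F$ that depends only on a single point contribution, with $q$-dependence of the form $1/([\mathfrak{t}^{1/2}q][\mathfrak{t}^{1/2}q^{-1}])$. $F$ can then be identified from the $d=1$ term together with its symmetry under $q\mapsto q^{-1}$.

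The main obstacle is the rigidity/factorization step: controlling the poles of each $q^d$ coefficient so that the $t$-limits are legitimate and the dependence on $q$ is forced to be exactly the stated rational function. Here I would use Okounkov's argument on properness of the $\kappa$-fixed locus together with self-duality of $\hat{\mathcal{O}}^{\operatorname{vir}}$. Finally, the $d=1$ check fixes the normalization. There $\operatorname{Hilb}^1=\A^3$ with obstruction $\Omega\otimes\mathfrak{t}^{-1}$, which gives $-\frac{[\mathfrak{t}t_1^{-1}][\mathfrak{t}t_2^{-1}][\mathfrak{t}t_3^{-1}]}{[t_1][t_2][t_3]}$. This matches the $q^1$ coefficient of the right-hand side, since $[\mathfrak{t}t_1^{-1}]=[t_2t_3]$.
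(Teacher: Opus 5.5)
The paper does not prove this statement. It quotes it from \cite{Okounkov2017KTheory} as background, so your outline has to stand on its own. Your list of ingredients is the right one: critical locus, symmetric obstruction theory, localization to plane partitions, factorization, rigidity, and a limit. But the load-bearing steps are wrong as stated.

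Write $Z$ for the left-hand side. Rigidity cannot be applied to the $q^d$-coefficients of $Z$. Since $\operatorname{Hilb}^d(\A^3)$ is not proper, these coefficients are genuinely rational. Already the $q^1$-coefficient is, up to sign, $\frac{[t_1t_2][t_1t_3][t_2t_3]}{[t_1][t_2][t_3]}$, which has poles along $t_i=1$ even on a slice $\mathfrak{t}=\mathrm{const}$. Self-duality of $\hat{\mathcal{O}}^{\operatorname{vir}}$ buys boundedness of the individual contributions and a symmetry, not polynomiality. Nor can $\mathfrak{t}$ be made an independent variable: $(\C^*)^3$ is already the full torus scaling $W$, so one works instead with the subtorus $\ker\mathfrak{t}$.

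The factorization over $S^d\A^3$, which you offer only as an alternative, is what makes rigidity usable. Pushing forward along the proper Hilbert--Chow map and using translation invariance gives $Z=\operatorname{PE}\bigl(G/([t_1][t_2][t_3])\bigr)$, up to a monomial normalization, where every $q$-coefficient of $G$ is a Laurent polynomial in the $t_i^{1/2}$. Only then does boundedness help. Along one-parameter subgroups of $\ker\mathfrak{t}$, each factor $-[\mathfrak{t}w]/[w]$ of a fixed-point contribution stays bounded. Hence the Newton polygon of $G$ in the $\ker\mathfrak{t}$-directions lies in that of $[t_1][t_2][t_3]$.

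That still leaves a finite-dimensional family of candidates. An extra input is needed to force $G=[t_1t_2][t_1t_3][t_2t_3]\,g(q,\mathfrak{t})$. One option is the vanishing of all $q^d$-coefficients of $Z$, $d\geq 1$, along $t_it_j=1$, which one can get from the surjective cosection of the obstruction sheaf induced by translations. Your sketch does not supply any such input.

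The determination of $g(q,\mathfrak{t})$ is also missing. In your third step, no limit with $\mathfrak{t}$ fixed makes plane partitions drop out. Each factor $-[\mathfrak{t}w]/[w]$ either tends to $-\mathfrak{t}^{\pm 1/2}$ or stays a nonzero ratio. So genuinely three-dimensional partitions such as $\{(0,0,0),(1,0,0),(0,1,0),(0,0,1)\}$ keep contributing, and there is no reduction to two-dimensional partitions or to $\operatorname{Hilb}^n(\C^2)$. In a generic such limit you actually obtain a $\mathfrak{t}$-refined MacMahon sum $\sum_\pi(-q)^{|\pi|}\epsilon(\pi)\,\mathfrak{t}^{m(\pi)/2}$ over all plane partitions, with $\epsilon(\pi)=\pm1$. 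Identifying $g$ then requires computing $m(\pi)$ and evaluating that sum, which is a separate combinatorial theorem.

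Your alternative route is circular at exactly this point. Factorization carries no information about the $q$-dependence, so positing the shape $1/([\mathfrak{t}^{1/2}q][\mathfrak{t}^{1/2}q^{-1}])$ assumes the content of the theorem. The $d=1$ term, together with a $q\mapsto q^{-1}$ symmetry that is not available a priori for a power series in $q$, fixes only the leading coefficient of $g$.

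A smaller slip: $\chi(\mathcal{O},\mathcal{O})-\chi(I_\pi,I_\pi)$ is already all of $T^{\operatorname{vir}}_\pi$. It therefore cannot serve as the half $V_\pi$ in $T^{\operatorname{vir}}_\pi=V_\pi-\mathfrak{t}^{-1}\overline{V_\pi}$.
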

Though this critical locus description is special to dimension 3, it is not special to the rank 1 case; Beentjes–Ricolfi \cite{non-comap} proves that the Quot scheme $\operatorname{Quot}^{d}_r(\A^3)$ is the critical locus of a potential on a non-commutative Quot scheme $\operatorname{ncQuot}^{d}_{r}(\A^3)$. Using this description, Fasola, Monavari and Ricolfi \cite{Higher-rank-DT} and Arbesfeld and Kononov \cite{ArbesfeldKononovToAppear} proved the formula for the generating series of the twisted virtual Euler characteristic on $\operatorname{Quot}^{d}_r(\A^3)$ conjectured by Awata and Kanno \cite{Awata2009QuiverMM}.
\begin{theorem}
    (\cite{Higher-rank-DT}, \cite{ArbesfeldKononovToAppear}). 
    \[\sum_{d\geq 0}\chi_{T}(\operatorname{Quot}_r^{d}(\A^3),\hat{\mathcal{O}}^{\operatorname{vir}})((-1)^rq)^d=\operatorname{PE}\left(\frac{[\mathfrak{t}^r][t_1t_2][t_1t_3][t_2t_3]}{[\mathfrak{t}][\mathfrak{t}^{\frac{1}{2}}q][\mathfrak{t}^{\frac{1}{2}}q^{-1}][t_1][t_2][t_3]}\right).\]
\end{theorem}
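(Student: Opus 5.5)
The plan is to follow the strategy of Okounkov and its higher-rank extension. First I realize $\operatorname{Quot}_r^d(\A^3)$ as the critical locus of a potential on a smooth ambient space, and then reduce the computation to torus fixed points.

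\textbf{Step 1: critical locus.} Use the description of Beentjes–Ricolfi. The ambient space is $\operatorname{ncQuot}^d_r(\A^3)$, the stable locus of triples of endomorphisms $(X_1,X_2,X_3)$ of $\C^d$ together with $r$ framing vectors. Divide by $\operatorname{GL}_d$ and take the potential $W=\operatorname{tr}(X_1[X_2,X_3])$. Then $\operatorname{Quot}_r^d(\A^3)=\{dW=0\}$. This gives a symmetric perfect obstruction theory, and hence the Nekrasov–Okounkov twisted virtual structure sheaf $\hat{\oo}^{\operatorname{vir}}=\oo^{\operatorname{vir}}\otimes K_{\operatorname{vir}}^{1/2}$. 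The torus $T$ acting on the $X_i$ by $t_i$ and on the framing by $(\Gm)^r$ scales $W$ by $\mathfrak{t}$. So the obstruction theory is $T$-equivariantly symmetric only up to the twist by $\mathfrak{t}$.

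\textbf{Step 2: localization.} The fixed points of the full torus are $r$-tuples of plane partitions $\vec\pi$ with $|\vec\pi|=d$. By virtual localization, $\chi_T$ becomes a sum over $\vec\pi$ of $\hat a(-T^{\operatorname{vir}}_{\vec\pi})$, where $\hat a(x)=1/[x]$ extended multiplicatively. The vertex character $T^{\operatorname{vir}}_{\vec\pi}$ has the form $V - \mathfrak{t}\,\overline V$ up to framing terms, with $V$ determined by the $\vec\pi$. The sign conventions $(-q)^d$ and $((-1)^r q)^d$ absorb the signs coming from the square roots of $\mathfrak{t}$.

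\textbf{Step 3: rigidity.} Next I show that the generating function is independent of the framing equivariant parameters and of $t_i$ along suitable directions. The argument uses the properness of the $\mathfrak{t}$-fixed-locus contribution and a limit: let the framing weights $w_j\to\infty$ (or $\mathfrak{t}^{1/2}$ limits) and check that each fixed-point term stays bounded. A Laurent polynomial that is bounded at $0$ and at $\infty$ is constant. This is the step I expect to be the main obstacle. One must control the degree of $\hat a(T^{\operatorname{vir}})$ in the limiting variable, which requires a dimension-counting argument on the symmetric-up-to-$\mathfrak{t}$ character. In the rank-$r$ case one must in addition show independence of the framing weights. Otherwise the factor $[\mathfrak{t}^r]/[\mathfrak{t}]$ cannot be isolated.

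\textbf{Step 4: evaluation in a limit.} After rigidity, I specialize to a convenient slope, for instance $t_3\to$ a limit making $\mathfrak{t}$ fixed and $t_1,t_2$ degenerate. The sum then reduces to a computation on $\operatorname{Hilb}(\C^2)$ or on symmetric products. There the plethystic exponential emerges from the factorization over connected components of the fixed locus in $\sym^\bullet$, giving $\operatorname{PE}$ of the stated single-particle term. For rank $r$, independence of the framing weights lets me send them to separate infinities. The answer then factorizes into $r$ shifted rank-one pieces, and summing the geometric series in $\mathfrak{t}$ produces the factor $[\mathfrak{t}^r]/[\mathfrak{t}]$. This recovers the stated formula; the rank-one case is Okounkov's theorem.
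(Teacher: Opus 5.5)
The paper gives no proof of this statement. It is quoted from \cite{Higher-rank-DT}, \cite{ArbesfeldKononovToAppear} as background. Your outline broadly follows the Fasola--Monavari--Ricolfi route: critical locus, independence of the framing weights $w$, separating the $w_i$, then Okounkov's rank-one theorem. The step that fails is the last one.

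When the $w_i$ are pulled apart, the part of $T^{\operatorname{vir}}_{\vec\pi}$ mixing $\pi_i$ and $\pi_j$ contributes $(-\mathfrak t^{1/2})^{\pm(|\pi_j|-|\pi_i|)}$. This is multiplicative in the sizes. So with $Z_r(q)=\sum_d\chi_T(\operatorname{Quot}^d_r(\A^3),\hat{\mathcal O}^{\operatorname{vir}})q^d$ you get $Z_r(q)=\prod_{i=1}^r Z_1\big(q(-\mathfrak t^{1/2})^{2i-r-1}\big)$. The shift rescales $q$ inside each factor; it does not put a weight $\mathfrak t^{(2i-r-1)/2}$ in front of it. Set $G=\frac{[t_1t_2][t_1t_3][t_2t_3]}{[t_1][t_2][t_3]}$ and $F_1(q)=G/([\mathfrak t^{1/2}q][\mathfrak t^{1/2}q^{-1}])$. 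Okounkov's theorem then makes the plethystic logarithm of $Z_r((-1)^rq)$ equal to $\sum_{i=1}^r F_1(q\,\mathfrak t^{(2i-r-1)/2})$. This is a telescoping sum, not a geometric series: use $\frac{1}{[x][y]}=\frac{c(y)-c(x)}{2[x/y]}$ with $c(x)=\frac{x^{1/2}+x^{-1/2}}{x^{1/2}-x^{-1/2}}$ and $x/y=\mathfrak t$. It gives
\[\sum_{i=1}^r F_1\big(q\,\mathfrak t^{(2i-r-1)/2}\big)=\frac{G\,[\mathfrak t^r]}{[\mathfrak t]\,[\mathfrak t^{r/2}q]\,[\mathfrak t^{r/2}q^{-1}]}.\]
Your geometric series $\sum_i \mathfrak t^{(2i-r-1)/2}F_1(q)=\frac{[\mathfrak t^r]}{[\mathfrak t]}F_1(q)$ is exactly the right-hand side as printed. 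But it is not what the limit produces, and the two already differ for $r\ge 2$ in the $q^2$-coefficient, since that coefficient of $1/([aq][aq^{-1}])$ is $-(a+a^{-1})$. The printed denominator $[\mathfrak t^{1/2}q][\mathfrak t^{1/2}q^{-1}]$ is a misprint for $[\mathfrak t^{r/2}q][\mathfrak t^{r/2}q^{-1}]$, which is what \cite{Higher-rank-DT} proves. So ``this recovers the stated formula'' cannot be right for $r\ge2$. You need to compute the shift exponents explicitly and prove the telescoping identity; the answer is decided there.

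Step 3 also needs repair. Its claim about the $t_i$ is false. The coefficients are not Laurent polynomials in $t$, because the non-compactness of $\A^3$ produces poles at $t_i^k=1$. The answer is also not independent of $t$ along $\ker\mathfrak t$, since $G$ is not constant once $\mathfrak t\neq 1$ is fixed. Okounkov's rank-one argument applies rigidity only after passing to a suitably normalized plethystic logarithm. Since you cite rank one anyway, drop the $t$-part of Step 3 and the slope computation in Step 4.

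What the rank-$r$ argument does need, and what you leave open, is two things:
\begin{enumerate}
\item[(i)] Each $q^d$-coefficient is a Laurent polynomial in the $w_i$, with coefficients rational in $t^{1/2}$. For a character $t^aw^b$ with $b\neq 0$, the identity component of its kernel surjects onto $T$. Hence its fixed locus consists of quotients supported at the origin and is proper, which rules out poles along $t^aw^b=1$.
\item[(ii)] Each fixed-point term stays bounded as $w\to 0,\infty$. The $w_i/w_j$-part of $T^{\operatorname{vir}}_{\vec\pi}$ equals $-\mathfrak t^{\pm1}$ times the dual of the $w_j/w_i$-part. So each monomial $x$ contributes $[\mathfrak t^{\pm1}x^{-1}]/[x]$ or its inverse, whose limits as $x\to\infty$ and $x\to 0$ are $-\mathfrak t^{\mp1/2}$ and $-\mathfrak t^{\pm1/2}$ (or their inverses).
\end{enumerate}
The same computation, with the rank $|\pi_j|-|\pi_i|$ of the $w_j/w_i$-part, produces the exponents $2i-r-1$ above.
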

In the $\A^4$-case Kool and Rennemo realize $\operatorname{Quot}^{d}_{r}(\A^4)$ as the zero locus of an isotropic section of an orthogonal bundle on $\operatorname{ncQuot}^{d}_{r}(\A^4)$ setting it in the "standard model" version of the invariants defined by Oh and Thomas \cite{Oh2020CountingSO}. They then consider the virtual Euler characteristic of the twisted structure sheaf tensored with a specific bundle. They identify the generating function with the instanton partition function $\mathbf{Z}^{\operatorname{NP}}_{r}$ of Nekrasov and Piazzalunga \cite{Nekrasov_2019} and prove their conjectured formula.
\begin{theorem}(\cite{kool2025proofmagnificentconjecture}.) 
\[\mathbf{Z}_r^{\operatorname{NP}}=\operatorname{PE}\left(\frac{[t_1t_2][t_1t_3][t_2t_3][y]}{[y^{\frac{1}{2}}q][y^{\frac{1}{2}}q^{-1}][t_1][t_2][t_3][t_4]}\right)\]
where $y=y_1\cdots y_r$ and $t_1t_2t_3t_4=1$.
\end{theorem}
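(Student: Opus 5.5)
The plan is to follow Kool--Rennemo's strategy, adapting Okounkov's three-dimensional argument to the orthogonal setting of Oh--Thomas. First I will realize $\operatorname{Quot}^{d}_{r}(\A^4)$ as the zero locus of an isotropic section $s$ of an orthogonal bundle $E$ on the smooth non-commutative Quot scheme $\operatorname{ncQuot}^{d}_{r}(\A^4)$. Here $E$ is built from the commutators $[X_i,X_j]$, and the pairing on $\Lambda^2$ of the four-dimensional space is induced by $t_1t_2t_3t_4=1$. This gives the Oh--Thomas virtual structure sheaf $\hat{\mathcal{O}}^{\operatorname{vir}}$, whose sign is fixed by a choice of maximal isotropic subbundle. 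Next I localize with respect to the torus $T$ acting on $t_1,\dots,t_4$ and on $y_1,\dots,y_r$. The fixed points are $r$-tuples of solid partitions, so $\mathbf{Z}^{\operatorname{NP}}_r$ becomes a sum of rational functions. The signs come from the square root of the virtual tangent space, and the insertion is the $\Lambda^\bullet$ of the tautological bundle twisted by the $y_i$.

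The main step is to prove that the generating function is a plethystic exponential of the claimed single-particle term, and I will do this by a rigidity/dimension-reduction argument in the style of Okounkov. Consider the $\mathbf{Z}^{\operatorname{NP}}_r$ as a function of the framing variables $y_i$. I will show that each coefficient of $q^d$ is a Laurent polynomial in the $y_i$, bounded as the $y_i\to 0,\infty$ in suitable directions; by the self-duality from the orthogonal structure, this should make the dependence factor through $y=y_1\cdots y_r$. I then specialize $y_i$ so that only one framing is active, which reduces the higher-rank statement to a rank-one and $r$-fold product structure. Alternatively, at $y=t_4^{-1}$-type specializations, the four-dimensional theory should degenerate to the three-dimensional one via the cosection/restriction to $\A^3$. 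This identifies the answer with the Awata--Kanno formula of \cite{Higher-rank-DT}, \cite{ArbesfeldKononovToAppear}, with $y$ playing the role of $\mathfrak{t}^r$.

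To pin down the exponent, I will take the logarithm of $\mathbf{Z}^{\operatorname{NP}}_r$. I will argue that the logarithm has poles only at $q=y^{\pm1/2}$ and at the $t_i=1$, using properness of the fixed loci along the relevant one-parameter subgroups. Comparing these poles and the asymptotics with the right-hand side then forces equality.

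I expect the main obstacle to be the sign choices in the orthogonal localization. The Oh--Thomas square-root Euler classes at the fixed solid partitions carry signs that must be shown to be compatible, namely independent of the fixed point up to a global sign. Without this, neither rigidity nor the reduction to dimension three applies. My first attempt would be to compute these signs via the chosen maximal isotropic subbundle on $\operatorname{ncQuot}$, which exists globally because $E$ comes from $\Lambda^2$. This would show that the sign equals $(-1)$ raised to a universal function of $d$ and $r$, absorbed into the $(-1)^r q$-type variable change.
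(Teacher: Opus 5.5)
The paper does not prove this statement. It quotes it from Kool--Rennemo and records only their set-up: the isotropic commutator section of an orthogonal bundle on $\operatorname{ncQuot}^d_r(\A^4)$, the Oh--Thomas standard model, and a twisted insertion. That is what your first paragraph reproduces, so the rest of your proposal has to stand on its own. The step that actually produces the closed formula is missing.

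\textbf{The dimensional reduction only gives a slice.} Once each $y_a$ is tied to $t_4$, the insertion kills every tuple of solid partitions with a box off $\{x_4=0\}$. With the conventions of the statement this is the slice $y=t_4^{\,r}$; since $t_4=(t_1t_2t_3)^{-1}$, the exponent there becomes the $\A^3$ (Awata--Kanno) one. But $y$ is an independent variable in the target. Expanding, the $q^n$-coefficient of the plethystic logarithm of the right-hand side is
\[-\left(y^{n/2}-y^{-n/2}\right)\frac{[t_1t_2][t_1t_3][t_2t_3]}{[t_1][t_2][t_3][t_4]},\]
a Laurent polynomial in $y^{1/2}$ whose degree grows with $n$. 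One specialization of $y$, or the few obtained by permuting the $t_i$, cannot determine such coefficients.

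You need an a priori structural input on the $y$-dependence, and your proposal has none. One option would be to show that the plethystic logarithm only involves $y^{\pm n/2}$. Another would be to show that $\mathbf{Z}^{\operatorname{NP}}_r$ is one and the same function of $y$ for every $r$; then the slices $y=t_4^{\,r}$, $r\geq 1$, could be interpolated coefficientwise.

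Your substitute, matching poles and asymptotics of $\log\mathbf{Z}^{\operatorname{NP}}_r$, does not work, for four reasons:
\begin{enumerate}
\item The ordinary logarithm of the right-hand side is singular at $q=\zeta y^{\pm 1/2}$ for every root of unity $\zeta$. Only the plethystic logarithm has its poles just at $q=y^{\pm1/2}$.
\item The series is formal in $q$, so statements about poles in $q$ need a continuation argument you do not have.
\item Properness of fixed loci of one-parameter subgroups controls poles in equivariant weights, not in $q$.
\item Matching pole locations and limits without matching principal parts forces nothing.
\end{enumerate}

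\textbf{Your rigidity step does not supply the missing input either.} Rigidity (no poles, bounded at $0$ and $\infty$) can only show independence of a variable. To conclude that the answer depends on the $y_a$ only through $y=y_1\cdots y_r$, you would need boundedness in the ratios $y_a/y_b$. The self-duality of the Oh--Thomas virtual tangent space makes the tangent-space factors bounded termwise; that is why the standard argument gives independence of framing weights. But in your set-up the insertion $\hat\Lambda^\bullet$ twisted by $y_a$ contributes a factor growing like $y_a^{\pm|\pi_a|/2}$ at each fixed point. So in ratio directions individual terms are unbounded, and you would need a cancellation you never establish.

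Likewise, ``specialize so that only one framing is active'' only reduces you to rank one. The rank-one $\A^4$ formula for general $y$ (Nekrasov's original conjecture) is exactly where the slice problem above sits, since Okounkov's $\A^3$ theorem covers only $y=t_4$.

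By contrast, the sign issue you single out as the main obstacle is the more tractable part. A global $T$-stable maximal isotropic subbundle fixes the orientation and the fixed-point square roots all at once.
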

In this paper we continue the scheme of calculating $K$-theoretic virtual invariants using a non-commutative version of the Hilbert scheme, applying it to the nested Hilbert scheme of points on $\A^2$. For a smooth projective surface $S$, integers $n_0\geq n_2\geq \cdots \geq n_N\geq 0$ and cohomology classes $\beta_0,\dots, \beta_{N-1}\in H^2(S,\Z)$ Gholampour, Sheshmani and Yau \cite{GHOLAMPOUR2020107046}define perfect obstruction theory on the nested Hilbert scheme $S^{[\mathbf{n}]}_{\mathbf{\beta}}$ parameterizing 
\[(Z_0,\dots, Z_N), \; (C_0,\dots, C_{r-1})\]
where $Z_i$ is a zero-dimensional subscheme of length $n_i$ and $C_i$ is a divisor with $[C_i]=\beta_i$ such that
\[I_{Z_i}(-C_i)\subset I_{Z_{i+1}}\]
for all $i=0,\dots, N-1$. This includes the case of the nested Hilbert scheme of points for $\beta_0=\beta_2=\cdots =\beta_{N-1}=0$. Gholampour and Thomas \cite{Gholampour2017DegeneracyLV} exhibit the nested Hilbert scheme of points on $S$ as a degeneracy locus of $S^{[n_0]}\times \cdots S^{[n_N]}$. From this, they equip it with a perfect obstruction theory equivalent to that of \cite{GHOLAMPOUR2020107046}. They further use a Thom-Porteous formula to express the pushforward of the virtual fundamental class to $S^{[n_0]}\times \cdots S^{[n_N]}$ in terms of Carlsson-Okounkov operators in the sense of \cite{Carlsson_2012}. For integers $r\geq 1$ and $n_0\geq n_2\geq \cdots \geq n_N\geq n_{N+1}=0$ Bonelli, Fasola and Tanzini \cite{Bonelli_Fasola_Tanzini_2024} define the moduli space $\mathcal{N}(r,\underline{n})$ of stable representations of the nested instantons quiver of numerical type $(r,\mathbf{n})$. They prove that $\mathcal{N}(1,\mathbf{n})\cong (\A^2)^{[\hat{\mathbf{n}}]}$ where $\hat{n}_i=n_0-n_{N+1-i}$. They further equip $\mathcal{N}(r,\underline{n})$ with a perfect obstruction theory and prove for $r=1$ that this is equivalent to that of \cite{GHOLAMPOUR2020107046}.\newline

In this paper, we consider the nested Hilbert scheme of points $\operatorname{NHilb}^{(d_0,\dots, d_N)}(\A^2)=S^{[\hat{\mathbf{n}}]}\cong \mathcal{N}(1,\mathbf{n})$ where $d_i=n_i-n_{i+1}$. We exhibit it as the zero locus of a section of a bundle on a non-commutative version $\operatorname{ncNHilb}^{(d_0,\dots, d_N)}(\A^2)$ and prove that the induced perfect obstruction theory agrees with that of \cite{GHOLAMPOUR2020107046}, \cite{Gholampour2017DegeneracyLV} and \cite{Bonelli_Fasola_Tanzini_2024}. We will use this to compute pushforwards of virtual structure sheaves allowing us to prove the following formula for the generating series. 
\begin{theorem}
    \[\sum_{d_0,\dots, d_N\geq 0}q_0^{d_0}\cdots q_N^{d_N}\chi_{T}(\operatorname{NHilb}^{(d_0,\dots ,d_N)}(\A^2),\mathcal{O}^{\operatorname{vir}})=\operatorname{PE}\left(\frac{q_0+(q_1+\cdots+q_N)(1-\mathfrak{t})}{(1-t_1)(1-t_2)}\right)\]
    where $\mathfrak{t}=t_1t_2$.
\end{theorem}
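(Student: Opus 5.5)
We argue by induction on the length $N$ of the nesting, following the strategy outlined in the abstract. Let
\[\pi:\operatorname{NHilb}^{(d_0,\dots,d_N)}(\A^2)\to \operatorname{NHilb}^{(d_0,\dots,d_{N-1})}(\A^2)\]
be the map forgetting the largest subscheme. Write $\mathcal{E}$ for the tautological bundle on $\operatorname{Hilb}^{d_N}(\A^2)$, that is, the bundle with fibre $H^0(\oo_Z)$.

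\textbf{Step 1 (the key pushforward).} The main claim is
\[\pi_*\mathcal{O}^{\operatorname{vir}}=c_{d_N}\cdot\mathcal{O}^{\operatorname{vir}},\qquad c_{d_N}=\chi_T\bigl(\operatorname{Hilb}^{d_N}(\A^2),\Lambda^\bullet(\mathcal{E}^\vee\otimes\mathfrak{t})\bigr)\]
or a similar tautological Euler characteristic, with $c_{d_N}$ a constant class.

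To prove this, realize both nested Hilbert schemes as zero loci of sections on the non-commutative nested Hilbert schemes. On the source, write $\operatorname{NHilb}=Z(s)\subset\operatorname{ncNHilb}$. The forgetful map lifts to a smooth map between the nc spaces: it forgets the extra block of matrices and the extra vectors, so its fibres are smooth. The obstruction bundle upstairs should split as the pullback of the obstruction bundle downstairs plus an extra piece $F$, coming from the commutator and nesting equations involving the last level.

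First, compare the two zero-locus descriptions mentioned in the abstract via the bundle map. This shows $\pi_*\mathcal{O}^{\operatorname{vir}}$ is $\mathcal{O}^{\operatorname{vir}}$ twisted by the pushforward along the smooth nc fibration of the Koszul class $\Lambda^\bullet F^\vee$. Conclude that the twist is a class $c$ pulled back from the base.

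Second, show that $c$ is constant. The fibre of $\pi$ over a point is essentially a Hilbert-type scheme of $d_N$ points extending a fixed ideal. By torus localization together with rigidity (properness of the relevant fixed loci), the twist is independent of the base point. Evaluating it over the point where all lower-level subschemes are empty gives $c_{d_N}=\chi_T(\operatorname{NHilb}^{(0,\dots,0,d_N)},\mathcal{O}^{\operatorname{vir}})$. Localization then expresses this as a tautological Euler characteristic on $\operatorname{Hilb}^{d_N}(\A^2)$.

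\textbf{Step 2 (the generating series of the twists).} Compute
\[\sum_d c_d\,q^d=\operatorname{PE}\!\left(\frac{q(1-\mathfrak{t})}{(1-t_1)(1-t_2)}\right).\]
This is the standard identity $\sum_d q^d\chi(\operatorname{Hilb}^d,\Lambda_{-u}\mathcal{E}^\vee)=\operatorname{PE}\bigl(q(1-u)/((1-t_1)(1-t_2))\bigr)$ evaluated at $u=\mathfrak{t}$. It can be proved by localization to monomial ideals, or quoted from the known computation of $\chi(\operatorname{Hilb}^d(\A^2),\Lambda^\bullet)$. The base case $N=0$ is $\operatorname{Hilb}^{d_0}(\A^2)$, which is smooth with $\mathcal{O}^{\operatorname{vir}}=\oo$. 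Its series is $\operatorname{PE}\bigl(q_0/((1-t_1)(1-t_2))\bigr)$, which is the $u=0$ case of the same identity.

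\textbf{Step 3 (assembling the formula).} By Step 1,
\[\chi_T(\operatorname{NHilb}^{(d_0,\dots,d_N)},\mathcal{O}^{\operatorname{vir}})=c_{d_N}\,\chi_T(\operatorname{NHilb}^{(d_0,\dots,d_{N-1})},\mathcal{O}^{\operatorname{vir}}),\]
so the generating series factorizes. Properness of $\pi$ is needed for $\pi_*$ to make sense equivariantly; otherwise, interpret everything through localization. Iterating gives a product of $\operatorname{PE}$'s, which equals the $\operatorname{PE}$ of the sum.

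The main obstacle is Step 1, specifically identifying the relative obstruction piece $F$ and proving that the twist is constant. I would first try localization on the fibres, showing that the fixed-point contributions depend only on $d_N$ because the weights coming from the lower levels cancel between the tangent and obstruction terms.
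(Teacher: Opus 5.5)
Your overall architecture matches the paper: lift $\pi$ to the non-commutative spaces, use the surjection $\psi:\mathcal{E}_{\underline d}\twoheadrightarrow p^*\mathcal{E}_{\underline{\hat d}}$ (just the quotient map, so transversality is automatic) to split off the Koszul class of $\mathcal{F}=\ker\psi$, push forward, and multiply the generating series. However, the step that carries all the content is not proved, and the mechanisms you suggest for it do not work. That step is that the twist $\pi_*(\wedge^\bullet_{-1}\mathcal{F}^\vee)$ is a constant class depending only on $d_N$.

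Localization gets you only part of the way. At a fixed point $\lambda_1\subset\cdots\subset\lambda_{N+1}$ the weights of $\lambda_1,\dots,\lambda_{N-1}$ do cancel. What remains is $P_{\mu/\lambda}=1/\wedge^\bullet_{-1}\bigl((Q_\mu^\vee-Q_\lambda^\vee)(1-(1-t_1)(1-t_2)Q_\mu)\bigr)$ with $\lambda=\lambda_N$ and $\mu=\lambda_{N+1}$. This depends on both $\lambda$ and $\mu$, so the last-level weights do not cancel term by term.

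For example, take $\lambda=\{(0,0)\}$ and $d_N=1$. The two fixed points give $\frac{1-\mathfrak t}{(1-t_1)(1-t_2/t_1)}$ and $\frac{1-\mathfrak t}{(1-t_2)(1-t_1/t_2)}$. Only their sum equals $\frac{1-\mathfrak t}{(1-t_1)(1-t_2)}$, which is the value for $\lambda=\varnothing$.

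So what has to be proved is a genuine identity: $c_n(\lambda)=\sum_{\mu\supset\lambda,\ |\mu|-|\lambda|=n}P_{\mu/\lambda}$ is independent of $\lambda$ for Young diagrams of every size. The size matters too. A base point "where all lower-level subschemes are empty" exists only when $\underline{\hat d}=0$, so you also need independence from $\underline{\hat d}$. "Rigidity" does not supply this. The fibres $\operatorname{Quot}(I_\lambda,n)$ are non-proper and $c_n(\lambda)$ is a genuine rational function in $t_1,t_2$. Rigidity arguments control the dependence of Laurent polynomials on equivariant parameters; they say nothing relating the values at different isolated fixed points of the base.

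The paper closes this gap combinatorially, and you would need this or an equivalent argument. First it writes $P_{\mu/\lambda}=\prod_{z\in\mu\setminus\lambda}\omega_\mu(z)$ with $\omega_\mu(x)=\prod_{r\in R(\mu)}(1-\mathfrak t r/x)/\prod_{a\in A(\mu)}(1-a/x)$. Splitting the sum according to whether a removable box $w$ lies in $\nu$, it shows that $c_n(\lambda)=c_n(\lambda\setminus\{w\})$ if and only if a rational function $D_{\lambda,n}(x)$ vanishes at $x=w$.

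It then pins down $D_{\lambda,n}$ from its poles and its limit:
\begin{itemize}
\item the poles at boxes $b\in R(\mu)\setminus R(\lambda)$ cancel between $\mu$ and $\mu\setminus\{b\}$;
\item the remaining simple poles sit at $a/\mathfrak t$ for $a\in A(\lambda)$, with residues proportional to $c_{n-1}(\lambda\cup\{a\})$;
\item the limit as $x\to\infty$ is $c_{n-1}(\lambda)$.
\end{itemize}
Induction on $n$ then forces $D_{\lambda,n}(x)=x\,c_{n-1}\prod_{r\in R(\lambda)}(x-r)/\prod_{a\in A(\lambda)}(x-a/\mathfrak t)$, which vanishes at every removable box.

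Without this identity your Step 1, and hence the factorization used in Step 3, is unsupported. Your Step 2 and the final assembly into a product of $\operatorname{PE}$'s are fine once it is in place.
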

\subsection{Acknowledgments} I thank my advisor Gergely Bérczi for years of valuable discussions forming my geometric intuition of the Hilbert scheme of points. I also thank Sergej Monavari for many insightful discussions on the virtual theory and for helpful talks about the proof of the formula. I also thank Richard Thomas and Martijn Kool for answering my questions around the topic. 
\section{The results}
For any $r\geq 0$ and any $\underline{d}=(d_0,\dots, d_r)\in \Z_{\geq 0}^r$ we construct the nested non-commutative Hilbert scheme 
\[\operatorname{ncNHilb}^{\underline{d}}(\A^2)=\{(N_\bullet, A_1,\dots, A_n,v)\; |\; A_i\in \operatorname{End}^{\operatorname{fil}}_{\C}(N_\bullet), \,v\in N_0,\, \C\langle A_1,\dots, A_n\rangle v=N_0 \}\big{/}\sim\]
which parameterizes flag-preserving operators $A_i:N_\bullet\to N_\bullet$ with a cyclic vector $v\in N_\bullet$ for a flag of vector spaces
\[N_\bullet=N_0\supset \cdots \supset N_{r+1}=0\]
with $\operatorname{dim}_{\C}N_i/N_{i+1}=d_i$. We construct an embedding from the nested Hilbert scheme of points on $\C^n$
\begin{align*}\operatorname{NHilb}^{\underline{d}}(\A^n)&=\{Z_{1}\subset \cdots \subset Z_{r+1}\subset \C^n \;  |\; \operatorname{dim}Z_i=0, \, \operatorname{length}(Z_i)=d_0+\cdots +d_{i-1}\}\\
&=\{I_{r+1}\subset \cdots \subset I_1\subset I_0=\C[x_1,\dots,x_n] \; |\; \operatorname{dim}_{\C}I_i/I_{i+1}=d_i\}
\end{align*}
which maps a nesting of ideals to the flag of vector spaces $N_\bullet$ with $N_i=I_i/I_{r+1}$ with operators given by left multiplication with $x_j$ and the cyclic vector given by the image of the unit $\overline{1}\in I_0/I_{r+1}$. We identify the image of this embedding with the locus of $\operatorname{ncNHilb}^{\underline{d}}(\A^2)$ where the operators $A_1,\dots, A_n$ commute. In the case where $n=2$ this locus is given by the vanishing of 
\[[A_1,A_2]\in \operatorname{End}^{\operatorname{fil}}_{\C}(N_\bullet)\]
and we prove using cyclicity that the vanishing of
\[\overline{[A_1,A_2]}\in \operatorname{Hom}_{\C}^{\operatorname{fil}}(N_{\bullet},N_{\bullet}/\langle v\rangle)\]
is sufficient. This allows us to exhibit the nested Hilbert scheme of points on $\C^2$ as the zero locus of a section of two different tautological bundles, thus equipping it with perfect obstruction theories. For $d_0\neq0$ we prove that the second bundle respectively first bundle gives a perfect obstruction theory with the $K$-theory of the virtual tangent bundle equal to the virtual tangent bundles of 
\[\operatorname{NHilb}^{(d_0,\dots, d_r)}(\A^2), \text{ respectively }\operatorname{NHilb}^{(0,d_0,\dots ,d_r)}(\A^2)\]
equipped with the perfect obstruction theories of Gholampour, Sheshmani and Yau \cite{GHOLAMPOUR2020107046}. In particular, the virtual invariants agree with that of \cite{GHOLAMPOUR2020107046}. We show that the morphism
\[p:\operatorname{NHilb}^{(d_0,\dots, d_r)}(\A^2)\to \operatorname{NHilb}^{(d_0,\dots, d_{r-1})}(\A^2) \]
extends to the non-commutative nested Hilbert scheme. Letting $\mathcal{E}_{\underline{d}}$ be one of the bundles above we construct a surjection 
\[\mathcal{E}_{(d_0,\dots ,d_r)}\twoheadrightarrow p^*\mathcal{E}_{(d_0,\dots, d_{r-1})}\]
which in turn gives us a factorization
\[\oo_{(d_0,\dots, d_r)}^{\operatorname{vir}}=p_*(\mathcal{F})\cdot \mathcal{O}_{(d_0,\dots,d_{r-1})}^{\operatorname{vir}}\]
in $K$-theory for a certain explicit tautological class $\mathcal{F}$. Using localization, we then calculate $p_*\mathcal{F}$ and show that it is a constant class depending only on $d_r$. Explicitly, we have the following main result.
\begin{theorem} \label{Main theorem}
    For $r\in \Z_{\geq 0}$ let $\underline{d}=(d_0,\dots ,d_{r})\in \Z_{\geq 0}^{r+1}$ be a dimension vector and let $\underline{\hat{d}}=(d_0,\dots, d_{r-1})\in \Z_{\geq 0}^{r}$. Let 
    \[p_{\underline{d},\underline{\hat{d}}}: \operatorname{NHilb}^{\underline{d}}(\A^2)\to \operatorname{NHilb}^{\underline{\hat{d}}}(\A^2)\]
    and 
    \[q_{\underline{\hat{d}}}:\operatorname{NHilb}^{\underline{\hat{d}}}(\A^2)\to \operatorname{pt}\]
    be the projections. Let $\oo^{\operatorname{vir}}_{\underline{d}},\oo_{\underline{\hat{d}}}^{\operatorname{vir}}$ be the virtual structure sheaves from the perfect obstruction theory of \cref{POT DEF} or equivalently \cite{GHOLAMPOUR2020107046}. We have an identity
    \[(p_{\underline{d},\underline{\hat{d}}})_*\mathcal{O}^{\operatorname{vir}}_{\underline{d}}=(q_{\underline{\hat{d}}}^*\alpha_{d_r})\cdot \oo_{\underline{\hat{d}}}^{\mathcal{\operatorname{vir}}}\]
    where 
    \[\alpha_{d_r}=\chi_{T}(\operatorname{NHilb}^{d_r}(\A^2),\wedge_{-1}^{\bullet}\left(\omega_{\A^2}^{[d_r]}\right))\]
    for $\omega_{\A^2}=\Omega_{\C^2}^2$ the dualizing sheaf on $\C^2$ and $\omega_{\A^2}^{[d_r]}$ the corresponding tautological bundle. 
\end{theorem}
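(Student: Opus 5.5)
The plan follows the outline in the section above: realize both nested Hilbert schemes as zero loci on non-commutative nested Hilbert schemes, compare the two zero-locus structures, and reduce to a localization computation.

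\textbf{Step 1 (the non-commutative map).} I would work with the description of $\operatorname{NHilb}^{\underline{d}}(\A^2)$ as the zero locus of the section $\overline{[A_1,A_2]}$ of a tautological bundle $\mathcal{E}_{\underline{d}}$ on the smooth scheme $\operatorname{ncNHilb}^{\underline{d}}(\A^2)$. The POT of \cref{POT DEF} is the one induced by this zero-locus description. The forgetful map $p=p_{\underline{d},\underline{\hat d}}$ extends to
\[\tilde p:\operatorname{ncNHilb}^{\underline{d}}\to \operatorname{ncNHilb}^{\underline{\hat d}},\]
sending $N_\bullet$ to $N_\bullet/N_r$ with the induced operators and cyclic vector.

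I would check that $\tilde p$ is smooth. Its fibre should be a non-commutative Quot-type space of the following kind: the data of a $d_r$-dimensional space $N_r$, operators $A_i$ on $N_r$, and the off-diagonal blocks $\operatorname{Hom}(N_0/N_r,N_r)$, modulo $GL(N_r)$. Cyclicity of $v$ in $N_0$ then becomes a stability condition relative to the base.

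The key geometric input is the surjection $\mathcal{E}_{\underline{d}}\twoheadrightarrow \tilde p^*\mathcal{E}_{\underline{\hat d}}$, obtained by taking the quotient of a filtered Hom by its last graded piece. This surjection is compatible with the sections. Writing $\mathcal{K}$ for its kernel, the section $\overline{[A_1,A_2]}$ restricts to a section of $\mathcal{K}$ on $W:=\tilde p^{-1}(\operatorname{NHilb}^{\underline{\hat d}})$, whose zero locus is $\operatorname{NHilb}^{\underline{d}}$.

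\textbf{Step 2 (factorization).} Let $j:\operatorname{NHilb}^{\underline{\hat d}}\hookrightarrow \operatorname{ncNHilb}^{\underline{\hat d}}$ be the inclusion. Since $\tilde p$ is smooth, $W=\tilde p^{-1}(j(\operatorname{NHilb}^{\underline{\hat d}}))$ carries the pulled-back virtual structure sheaf $\tilde p^*\oo^{\operatorname{vir}}_{\underline{\hat d}}$. I would use functoriality of virtual structure sheaves for zero loci of sections of extensions of bundles; in K-theory this means the Koszul classes multiply. With this, $\oo^{\operatorname{vir}}_{\underline{d}}$ becomes the refined Koszul class of $\mathcal{K}$ applied to $\tilde p^*\oo^{\operatorname{vir}}_{\underline{\hat d}}$. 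Pushing forward along the smooth map $W\to\operatorname{NHilb}^{\underline{\hat d}}$ and applying the projection formula gives
\[p_*\oo^{\operatorname{vir}}_{\underline{d}}=\tilde p_*\bigl(\Lambda_{-1}\mathcal{K}^\vee\bigr)\cdot\oo^{\operatorname{vir}}_{\underline{\hat d}}.\]
The class $\mathcal{F}$ is built from $\Lambda_{-1}\mathcal{K}^\vee$ together with the relative tangent bundle. For this to work, $\tilde p$ must be proper, or at least the pushforward must be defined via localization, since everything is $T$-equivariant with proper fixed loci.

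\textbf{Step 3 (constancy and identification).} I expect this to be the main obstacle. I need to show that the class $\tilde p_*\mathcal{F}$ in $K_T(\operatorname{ncNHilb}^{\underline{\hat d}})$ equals a constant $\alpha_{d_r}$ pulled back from a point, at least after restricting to $\operatorname{NHilb}^{\underline{\hat d}}$.

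First I would try a rigidity argument. The fibres of $\tilde p$ over different points are equivariantly related, and the fibrewise Euler characteristic depends only on the torus weights, which are fibre-independent because the off-diagonal blocks pair with the base data trivially in K-theory up to cancelling terms.

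Concretely, I would localize on $\operatorname{NHilb}^{\underline{\hat d}}$. At a fixed point (a nested partition), I would compute $\tilde p_*\mathcal{F}$ as a sum over fixed points of the fibre. These are partitions $\mu$ of $d_r$ placed appropriately, and the task is to show that the contributions involving the base partition cancel in the weight character. The terms $\operatorname{Hom}(N_0/N_r,N_r)$ appear in both $T_{\tilde p}$ and $\mathcal{K}$, twisted by $1$ versus $\mathfrak{t}$. This would leave exactly the character of $T\operatorname{Hilb}^{d_r}$ minus $\mathfrak{t}^{-1}\cdot(\text{tautological})^\vee$, giving $\sum_\mu \Lambda_{-1}(\omega^{[d_r]})^\vee|_\mu / \Lambda_{-1}T^\vee_\mu$. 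This is $\alpha_{d_r}$.

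If direct cancellation proves hard, a fallback is to first check the statement for $r=0$, where the base is a point, and then argue independence of the base point by a deformation or rigidity (Laurent polynomiality) argument.
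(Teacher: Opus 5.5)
Your Steps 1 and 2 agree with the paper. The quotient map $\mathcal{E}_{\underline{d}}\twoheadrightarrow p^*\mathcal{E}_{\underline{\hat d}}$ with kernel $\mathcal{K}$ gives $\oo^{\operatorname{vir}}_{\underline d}=\wedge^{\bullet}_{-1}(\mathcal{K}^\vee)\cdot p^*\oo^{\operatorname{vir}}_{\underline{\hat d}}$. So everything reduces to showing that the localized pushforward of $\wedge^{\bullet}_{-1}(\mathcal{K}^\vee)|_{\operatorname{NHilb}^{\underline d}(\A^2)}$ is the constant $\alpha_{d_r}$. (Smoothness of $\tilde p$ is true but not needed.)

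\textbf{The gap is Step 3.} All the content of the theorem lies there, and the mechanism you propose is false. At a fixed point, $\mathcal{K}=\mathfrak{t}^{-1}\operatorname{Hom}(N_0,N_r)$ and $T_{\tilde p}=(t_1^{-1}+t_2^{-1}-1)\operatorname{Hom}(N_0,N_r)+N_r$. Hence the relative virtual tangent space is $N_r-(1-t_1^{-1})(1-t_2^{-1})\operatorname{Hom}(N_0,N_r)$. The block $\operatorname{Hom}(N_0/N_r,N_r)$ does not cancel: it survives with coefficient $-(1-t_1^{-1})(1-t_2^{-1})$. It carries the dependence on the base partition; it is the cross term in the paper's $\mathcal{C}_{\lambda_r,\mu}=(Q_\mu^\vee-Q_{\lambda_r}^\vee)(1-(1-t_1)(1-t_2)Q_\mu)$.

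Moreover, the fixed points of the fibre over $\lambda_\bullet$ are all $\mu\supset\lambda_r$ with $|\mu|-|\lambda_r|=d_r$. These are skew shapes, not partitions of $d_r$, and their number depends on $\lambda_r$. Take $\underline d=(1,1)$ and $\lambda_1=(1)$. The fibre has two fixed points, $\mu=(2)$ and $\mu=(1,1)$, contributing
\[\frac{(1-\mathfrak{t})t_1}{(1-t_1)(t_1-t_2)}\quad\text{and}\quad \frac{(1-\mathfrak{t})t_2}{(1-t_2)(t_2-t_1)}.\]
Neither equals $\alpha_1=\frac{1-\mathfrak{t}}{(1-t_1)(1-t_2)}$; only their sum does. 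So no termwise identification with $\operatorname{Hilb}^{d_r}(\A^2)$ is possible.

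Your fallback does not close the gap either. Deformation invariance would at best show that the fibrewise invariant defines a class on the base, and the constancy of that class is exactly what is at stake. Several things block a rigidity argument:
\begin{itemize}
\item An equivariant class on a connected base typically restricts to different characters at different isolated fixed points (think of $\oo^{[n]}$).
\item Any path between two fixed points leaves the fixed locus.
\item The quantities $c_n(\lambda)$ are rational functions, not Laurent polynomials.
\item You would also need independence of $|\lambda_r|$, and no deformation connects diagrams of different sizes.
\end{itemize}
Also, $r=0$ is not a usable base case. There $\mathcal{E}_{(d_0)}$ is the reduced bundle, $\oo^{\operatorname{vir}}_{(d_0)}=\oo_{\operatorname{Hilb}^{d_0}(\A^2)}$, and $p_*\oo^{\operatorname{vir}}=\chi_T(\operatorname{Hilb}^{d_0}(\A^2),\oo)\neq\alpha_{d_0}$. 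The statement is really about $r\geq1$, and the natural base case is $\lambda_r=\varnothing$.

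\textbf{The missing ingredient} is the combinatorial identity that
\[c_n(\lambda)=\sum_{\mu\supset\lambda,\,|\mu|-|\lambda|=n}P_{\mu/\lambda}\]
is independent of $\lambda$. The paper proves it in three moves:
\begin{itemize}
\item It writes $P_{\mu/\lambda}=\prod_{z\in\mu\setminus\lambda}\omega_\mu(z)$, where $\omega_\mu(x)=\prod_{\rho\in R(\mu)}(1-\mathfrak{t}\rho/x)\big/\prod_{a\in A(\mu)}(1-a/x)$.
\item It reduces $c_n(\lambda)=c_n(\lambda\setminus\{w\})$ to the vanishing, at removable boxes $w$, of an auxiliary meromorphic function $D_{\lambda,n}(x)$.
\item By induction on $n$, it identifies $D_{\lambda,n}(x)=x\,c_{n-1}\prod_{\rho\in R(\lambda)}(x-\rho)\big/\prod_{a\in A(\lambda)}(x-a/\mathfrak{t})$ from its residues at $x=a/\mathfrak{t}$ and its limit as $x\to\infty$.
\end{itemize}
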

Note that in the non-nested case we have $\oo_{(d)}^{\operatorname{vir}}=\mathcal{O}_{\operatorname{Hilb}^{d}(\A^n)}$, i.e. the virtual structure sheaf is the structure sheaf. Using this and the known closed formulas for the $T$-equivariant Euler characteristic of $\mathcal{O}_{\operatorname{Hilb}^{d}(\A^n)}$ and $\wedge_{-1}^{\bullet}(\omega_{\A^2}^{[d]})$ the above then immediately implies a closed formula for the multivariate generating series of the equivariant virtual holomorphic Euler characteristic of the nested Hilbert scheme of points on $\C^2$.
\begin{corollary}
For any $r\geq 0$ let 
\[\mathbf{Z}_{\operatorname{GSY}}(q_0,\dots,q_r;t_1,t_2):=\sum_{d_0,\dots, d_r}q_0^{d_0}\cdots q_{r}^{d_r}\chi_T(\operatorname{NHilb}^{(d_0,\dots,d_r)}(\A^2),\oo^{\operatorname{vir}}_{(d_0,\dots,d_r)})\]
be the generating function for the $T$-equivariant virtual holomorphic Euler characteristic of the nested Hilbert scheme with perfect obstruction theory equivalent to that of Gholampour, Sheshmani and Yau \cite{GHOLAMPOUR2020107046}. Then
\[Z_{\operatorname{GSY}}(q_0,\dots,q_r;t_1,t_2)=\operatorname{PE}\left(\frac{q_0+(q_1+\cdots+q_r)(1-t_1t_2)}{(1-t_1)(1-t_2)}\right).\]
\end{corollary}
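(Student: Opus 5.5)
We will deduce the corollary from \cref{Main theorem} by induction on $r$, peeling off the last index $d_r$ one step at a time.

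\emph{Reduction to a product.} Write $\chi_T(-)$ for $(q_{\underline{d}})_*$. Since $q_{\underline{d}}=q_{\underline{\hat d}}\circ p_{\underline{d},\underline{\hat d}}$, \cref{Main theorem} and the projection formula give
\[\chi_T\bigl(\operatorname{NHilb}^{\underline d}(\A^2),\oo^{\operatorname{vir}}_{\underline d}\bigr)=(q_{\underline{\hat d}})_*\bigl(q_{\underline{\hat d}}^*\alpha_{d_r}\cdot\oo^{\operatorname{vir}}_{\underline{\hat d}}\bigr)=\alpha_{d_r}\cdot \chi_T\bigl(\operatorname{NHilb}^{\underline{\hat d}}(\A^2),\oo^{\operatorname{vir}}_{\underline{\hat d}}\bigr).\]
Iterating down to $r=0$, and using that $\oo^{\operatorname{vir}}_{(d_0)}=\oo_{\operatorname{Hilb}^{d_0}(\A^2)}$ because $\operatorname{Hilb}^{d_0}(\A^2)$ is smooth of expected dimension, we get
\[\chi_T\bigl(\operatorname{NHilb}^{\underline d}(\A^2),\oo^{\operatorname{vir}}_{\underline d}\bigr)=\chi_T\bigl(\operatorname{Hilb}^{d_0}(\A^2),\oo\bigr)\prod_{i=1}^r\alpha_{d_i}.\]
Summing over all $\underline d$, the generating series factors as
\[\mathbf Z_{\operatorname{GSY}}=\Bigl(\sum_{d}q_0^d\,\chi_T(\operatorname{Hilb}^d(\A^2),\oo)\Bigr)\prod_{i=1}^r\Bigl(\sum_d q_i^d\,\alpha_d\Bigr).\]
Because $\operatorname{PE}$ turns sums into products, it remains to prove two identities:
\[\sum_d q^d\chi_T(\operatorname{Hilb}^d(\A^2),\oo)=\operatorname{PE}\Bigl(\frac{q}{(1-t_1)(1-t_2)}\Bigr)\quad\text{and}\quad\sum_d q^d\alpha_d=\operatorname{PE}\Bigl(\frac{q(1-t_1t_2)}{(1-t_1)(1-t_2)}\Bigr).\]

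\emph{The structure sheaf series.} This is the classical formula. One route is localization: sum over partitions of $\prod_{s\in\lambda}(1-t_1^{-a(s)-1}t_2^{l(s)})^{-1}(1-t_1^{a(s)}t_2^{-l(s)-1})^{-1}$. A cleaner route uses that $\operatorname{Hilb}^d(\A^2)\to\sym^d(\A^2)$ is a resolution with rational singularities. Then $\chi_T(\operatorname{Hilb}^d,\oo)$ is the character of $\C[x,y]^{\otimes d}$ invariant under $S_d$, and the generating function of these characters is $\operatorname{PE}$ of the character of $\C[x,y]$, which is $1/((1-t_1)(1-t_2))$. Weight conventions ($t_i$ versus $t_i^{-1}$) must be matched to the paper's torus action.

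\emph{The $\alpha_d$ series, the main obstacle.} Here $\alpha_d=\chi_T(\operatorname{Hilb}^d,\wedge^\bullet_{-1}\omega^{[d]})$ with $\omega=\oo\otimes\mathfrak t^{\pm1}$ equivariantly. I would invoke the known formula for $\sum_d q^d\chi(\operatorname{Hilb}^d S,\wedge^\bullet_{-y}L^{[d]})$ (Arbesfeld, or Danila and Scala type results). On $\A^2$ this reads $\operatorname{PE}\bigl(q(1-yL)/((1-t_1)(1-t_2))\bigr)$, and specializing $y=1$, $L=\mathfrak t$ gives the claim. If I have to prove it directly, I would use the Haiman/BKR description. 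Pushing forward to $\sym^d$ turns $\wedge^k L^{[d]}$ into $\bigl(\wedge^k(\oo^{\oplus d})\otimes\C[x,y]^{\otimes d}\bigr)^{S_d}$, computed via the Scala-type resolution. Then $\wedge_{-1}$ of the tautological bundle becomes the invariants of a tensor power of the two-term complex $\C[x,y]\xrightarrow{}L\C[x,y]$, whose plethystic generating function is $\operatorname{PE}$ of the Euler characteristic of one factor, namely $q(1-\mathfrak t)/((1-t_1)(1-t_2))$. This also explains the factor $(1-t_1t_2)$ in the corollary.

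The delicate points are the sign and dual conventions, and checking the vanishing of higher cohomology needed for this identification.
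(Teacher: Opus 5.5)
Your proposal is correct and follows the paper's own proof. Both arguments iterate \cref{Main theorem} with the projection formula to get $\beta_{d_0}\alpha_{d_1}\cdots\alpha_{d_r}$, using $\oo^{\operatorname{vir}}_{(d_0)}=\oo_{\operatorname{Hilb}^{d_0}(\A^2)}$ at the last step, and then substitute the two classical plethystic generating series. The only difference is the justification of those two classical series: you use rational singularities of Hilbert--Chow and Scala/Arbesfeld-type formulas (with $\omega_{\A^2}=t_1t_2\oo$), whereas the paper simply cites \cite[Theorem 3.2]{Wang2014-oz} for both.
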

\begin{proof}
Set 
\[\beta_{d_0}=\chi_{T}(\operatorname{Hilb}^{d_0}(\A^2),\oo^{\operatorname{vir}})=\chi_{T}(\operatorname{Hilb}^{d_0}(\A^2),\oo).\]
The above then implies that
\[\chi_T(\operatorname{NHilb}^{(d_0,\dots,d_r)}(\A^2),\oo^{\operatorname{vir}}_{(d_0,\dots,d_r)})=\beta_{d_0}\alpha_{d_1}\alpha_{d_2}\cdots \alpha_{d_r}.\]
We note that the classic known formulas show that
\[\alpha_{d_i}=[x^{d_i}]\operatorname{PE}\left(\frac{x(1-t_1t_2)}{(1-t_1)(1-t_2)}\right)\]
while
\[\beta_{d_1}=[x^{d_1}]\operatorname{PE}\left(\frac{x}{(1-t_1)(1-t_2)}\right).\]
To see this we note that $\omega_{\A^2}=t_1t_2\mathcal{O}_{\A^2}$. The results then follow from \cite[Theorem 3.2]{Wang2014-oz} with respectively $u=1,v=0$ and $A=(1,1)$ and $u=v=0$ and $A=(0,0)$. We now conclude 
\begin{align*}
    Z_{\operatorname{GSY}}(q_0,\dots,q_r;t_1,t_2)&=\left(\sum_{d_0\geq 0}q_0^{d_0}\beta_{d_0}\right)\left(\sum_{d_1\geq 0}q_1^{d_1}\alpha_{d_1}\right)\cdots \left(\sum_{d_r\geq 0}q_r^{d_r}\alpha_{d_r}\right)\\
    &=\operatorname{PE}\left(\frac{q_0}{(1-t_1)(1-t_2)}\right)\prod_{i=1}^r\operatorname{PE}\left(\frac{q_i(1-t_1t_2)}{(1-t_1)(1-t_2)}\right)\\
&=\operatorname{PE}\left(\frac{q_0+(q_1+\cdots+q_r)(1-t_1t_2)}{(1-t_1)(1-t_2)}\right).
\end{align*}
\end{proof}
\section{Preliminaries}
\subsection{Notation and conventions}
For an algebraic group $G$ and a $G$-scheme $X$ we will denote by $K_0^G(X)$ the $K$-theory of $G$-equivariant coherent sheaves and by $K_G^0(X)$ the $K$-theory of $G$-equivaraint locally free sheaves. \newline 

\noindent We will use the convention that $T=\Gm^n$ acts on $\A^n$ by 
\[(t_1,\dots, t_n).(p_1,\dots, p_n)=(t_1^{-1}p_1,\dots, t_n^{-1}p_n)\]
so that $\mathcal{O}_{\A^n}=\C[x_1,\dots, x_n]$ is the $T$-representation where $t.x_i=t_ix_i$. \newline 

\noindent We will keep track of the number of points of the nested Hilbert scheme of points by keeping track of increments. The subschemes will be indexed as ascending chains starting at 1, while the increments will be indexed starting at 0. In other words, the nested Hilbert scheme $\operatorname{NHilb}^{(d_0,\dots, d_r)}(\A^n)$ of $(d_0,\dots, d_r)$ points on $\A^n$ will parameterize finite subschemes
\[Z_{r+1}\subset \cdots \subset Z_1\subset \A^n\]
such that $\operatorname{length}Z_1=d_0$ and 
\[\operatorname{length}Z_{i+1}-\operatorname{length}Z_i=d_i.\]
If we let $I_i$ be the ideal of $Z_i$ with $I_0=\mathcal{O}_{\A^n}$ we are equivalently parameterizing 
\[I_{r+1}\subset \cdots \subset I_{1}\subset I_0\]
such that $\operatorname{dim}_{\C}I_i/I_{i+1}=d_i$.
\subsection{Perfect obstruction theories and zero loci}
Recall that for a scheme $X$ a perfect obstruction theory as defined in \cite{BehrendFantechi} is a map 
\[\phi:\mathbb{E}\to \mathbb{L}_X\]
in the derived category $D^{[-1,0]}(X)$ such that $h^0(\phi)$ is an isomorphism and $h^{-1}(\phi)$ is surjective. Here, $\mathbb{L}_X=\tau_{-1\geq }L_X^{\bullet}$ is the truncation of the cotangent complex of $X$ as defined in \cite{Illusie1971}. If
\[\mathbb{E}=[E^{-1}\to E^{0}]\]
then the perfect obstruction theory gives rise to a cone
\[\mathfrak{C}\hookrightarrow E_1=(E^{-1})^{\vee}\]
which in turn gives rise to the \textit{virtual fundamental class}
\[[X]^{\operatorname{vir}}=i^*[\mathfrak{C}]\in A_{\operatorname{vd}}(X)\]
where $i:X\to E_1$ is the zero section and $i^*$ its Gysin map. Here, 
\[\operatorname{vd}=\operatorname{rk}\mathbb{E}=\operatorname{rk}E^0-\operatorname{rk}E^{-1}\]
is the virtual dimension and $A_*(X)$ is the Chow group of $X$. Similarly, we can define a \textit{virtual structure sheaf}
\[[Li^*\oo_{\mathfrak{C}}]\in K_0(X)\]
for $K_0(X)$ the $K$-theory of coherent sheaves on $X$. Suppose $X$ is proper. For any Chow cohomology class $\alpha\in A^*(X)$ and any class $V\in K^0(X)$ in the $K$-theory of locally free sheaves we may define virtual invariants by
\[\int_{[X]^{\operatorname{vir}}}\alpha\in A_*(\operatorname{pt})\cong\Z\]
and 
\[\chi^{\operatorname{vir}}(X,V):=\chi(X,V\otimes_X \oo_X^{\operatorname{vir}})\in K_0(\operatorname{pt})\cong \Z.\]
By \cite{Fantechi2010-va} these are related by the virtual Hirzebruch-Riemann-Roch formula
\[\chi^{\operatorname{vir}}(X,V)=\int_{[X]^{\operatorname{vir}}}\operatorname{ch}(V)\cdot \operatorname{Td}(T_X^{\operatorname{vir}})\]
where 
\[T_{X}^{\operatorname{vir}}=[\mathbb{E}^{\vee}]=[E_0]-[E_1]\in K^0(X)\]
is the virtual tangent bundle. Both the virtual fundamental class (see \cite[Theorem 4.6]{Siebert2004}) and the virtual structure sheaf (see \cite[Corollary 4.5]{Thomas2022-la}) only depend on the $K$-theory class $[\mathbb{E}]$.
\begin{example} \label{simple zero locus}
    We are primarily interested in the following simpler situation. Suppose that there exists a smooth scheme $Y$ with a locally free sheaf $E$ equipped with a section $s\in H^0(Y,E)$ such that 
    \[X=Z(S)\subset Y.\]
    Let $I\subset \mathcal{O}_Y$ be the ideal sheaf of $X$ in $Y$. Then the truncated cotangent bundle of $X$ can be expressed as 
    \[\mathbb{L}_X=\left[I/I^2\xrightarrow{d} \Omega_{Y}|_X\right].\]
    The section defines a map 
    \[s^\vee: E^\vee\to \mathcal{O}_Y\]
    with image $I$ so that 
    \begin{center}
        \begin{tikzcd}
            \mathbb{E}\;=\;\big{[}E^\vee|_X \arrow[shift left=15pt]{d}{s^\vee} \arrow{r}{d\circ s^\vee} & \Omega_Y|_X \big{]} \arrow[equal]{d} \\
            \mathbb{L}_X=\big{[} I/I^2 \arrow{r}{d} & \Omega_Y|_X \big{]}
        \end{tikzcd}
    \end{center}
    defines a perfect obstruction theory. In this case the virtual invariants are well known. The virtual fundamental class is simply Fultons refined Euler class \cite[Section 14.1]{Fult}, that is 
    \[[X]^{\operatorname{vir}}=0^!([Y])\]
    where $0:Y\to E$ is the zero section and $0^!$ is the refined Gysin map. For the virtual structure sheaf we see that the $K$-theory class of the Koszul complex $\operatorname{Kos}^{\bullet}(E^\vee,s^\vee)$ associated to $(E^\vee,s^\vee)$
    \[\wedge^{\bullet}_{-1}[E^{\vee}]=\sum_{i\geq 0}(-1)^i[\wedge^iE^{\vee}]=\sum_{i\geq 0}(-1)^i[\mathcal{H}^i(\operatorname{Kos}(E^\vee,s^\vee))]\in K_0(Y)\]
    is supported on $X$ and defines the virtual structure sheaf $\mathcal{O}_X^{\operatorname{vir}}\in K_0(X)$ (this is explained in \cite[Section 3.2]{Okounkov2017KTheory}).
\end{example}
\subsection{Torus actions and localization}
We fix a split torus $T=\Gm^n$. We let $t_1,\dots, t_n\in K^0_T(\operatorname{pt})$ be the classes corresponding to the one-dimensional representations of the characters given by projection to the $i$'th coordinate. Let $s_i=c_1(t_i)$. Then 
\[K_0^T(\operatorname{pt})\cong K^0_T(\operatorname{pt})\cong\Z[t_1^{\pm},\dots, t_n^{\pm}]\]
and 
\[A_*^T(\operatorname{pt})\cong A^*_T(\operatorname{pt})\cong \Z[s_1,\dots, s_n].\]
Let $X$ be a scheme with a $T$-action. A $T$-equivariant perfect obstruction theory is a map 
\[\mathbb{E}\to \mathbb{L}_X\]
in $D^{[-1,0]}(\operatorname{Coh}^T(X))$ such that the underlying morphism in $D^{[-1,0]}(X)$ is a perfect obstruction theory \cite{GraberPandharipande1999}. Given such, the virtual fundamental class and structure sheaf admits equivariant refinements, i.e. 
\[[X]^{\operatorname{vir}}\in A_{\operatorname{vd}}^{T}(X)\]
and
\[\mathcal{O}_{X}^{\operatorname{vir}}\in K_0^T(X).\]
In particular, if $X$ is proper and $\alpha\in A_T^i(X)$ and $V\in K_T^0(X)$ we get equivariant virtual invariants
\[\int_{[X]^{\operatorname{vir}}}\alpha\in A_*^{T}(\operatorname{pt})\cong \Z[s_1,\dots, s_n]\]
and 
\[\chi_T^{\operatorname{vir}}(X,V)=\chi_T(X,V\otimes_X\mathcal{O}_X^{\operatorname{vir}})\in K_0^T(\operatorname{pt})\cong \Z[t_1,\dots, t_n]\]
which are again related by the virtual Hirzebruch-Riemann-Roch formula
\[\operatorname{ch}_T(\chi_T^{\operatorname{vir}}(X,V))=\int_{[X]^{\operatorname{vir}}}\operatorname{ch}_T(V)\cdot \operatorname{Td}_T(T_X^{\operatorname{vir}}).\]
\begin{example}
Suppose $X=Z(s)\subset Y$ for $Y$ a smooth scheme with a $T$-action and $s\in H^0_T(Y,E)$ a section of a global $T$-equivariant section of an equivariant sheaf $E$. Then \cref{simple zero locus} naturally produces a $T$-equivariant perfect obsturction theory. We again have 
    \[[X]^{\operatorname{vir}}=0^![Y]\]
    for $0^!$ the $T$-equivariant refined Gysin map and 
    \[\mathcal{O}^{\operatorname{vir}}=[\wedge^{\bullet}_{-1}E^{\vee}]=[\operatorname{Kos}^{\bullet}(E^\vee,s^{\vee})]\]
    where the Koszul complex is now of $T$-equivariant sheaves.
\end{example}
The main way to calculate these classes is through virtual localization. The general formula of Thomason \cite{Thomason1992-rm} states that for $\iota:X^T\hookrightarrow X$ the inclusion of the $T$-fixed locus the map
\[\iota_*:K_0^T(X^T)\to K_0^T(X)\]
becomes an isomorphism after applying 
\[-\otimes_{K^0_T(\operatorname{pt})}K_T^0(\operatorname{pt})_{\operatorname{loc}}\]
where 
\[K_T^0(\operatorname{pt})_{\operatorname{loc}}=K_T^0(\operatorname{pt})\left[\left.\frac{1}{1-t^a}\; \right| \; a\in \Z^{n}\right]\]
i.e. after we localize in the coefficients $1-t^a=1-t_1^{a_1}\cdots t_n^{a_n}$. 
\begin{definition}
    When $X^T$ is finite and reduced it follows that 
    \[K_0^T(X)_{\operatorname{loc}}=\bigoplus_{x\in X^T}K_0^{T}(\operatorname{pt})_{\operatorname{loc}}.\]
    For $V\in \operatorname{K}_0^T(X)$ we will denote by 
    \[\operatorname{cont}_x(V)\in K_0^T(\operatorname{pt})_{\operatorname{loc}}\] 
    the corresponding unique class corresponding to $x\in X^T$ such that
    \[\iota_*\sum_{x\in X^T} \operatorname{cont}_x(V)=V.\]
    We will refer to it as the contribution of $V$ at $x\in X$.
\end{definition}
In the case of the virtual fundamental class, we have a formula for the unique class. When we restrict the perfect obstruction theory to the fixed locus, it splits into a fixed and a moving part 
\[\iota^*\mathbb{E}=\mathbb{E}^{\operatorname{fix}}\oplus\mathbb{E}^{\operatorname{mov}}\]
corresponding to the summands with trivial respectively non-trivial weights in the eigensheaf decomposition. The induced map  
\[\iota^*\phi^{\operatorname{fix}}: \mathbb{E}^{\operatorname{fix}}\to (i_*\mathbb{L}_{X})^{\operatorname{fix}}\to \mathbb{L}_{X^T}\]
is a perfect obstruction thus inducing a virtual structure sheaf $\mathcal{O}_{X^T}^{\operatorname{vir}}$. If we define the virtual normal bundle as $N^{\operatorname{vir}}=(\iota^*T_X^{\operatorname{vir}})^{\operatorname{mov}}$ where $T_{X}^{\operatorname{vir}}=\mathbb{E}^\vee$ is the virtual tangent bundle, then \cite[Theorem 3.3]{Qu2018-uo} states that
\[\mathcal{O}_{X}^{\operatorname{vir}}=\iota_*\left(\frac{\mathcal{O}_{X^T}^{\operatorname{vir}}}{\wedge_{-1}^{\bullet}[N^{\operatorname{vir,\vee}}]}\right).\]
In particular, this allows us to define the virtual Euler characteristic for non-proper $X$.
\begin{definition}
    Let $X$ be a scheme with a $T$-action and a $T$-equivariant perfect obstruction theory. Suppose $X^T$ is proper. For $V\in K^0_T(X)$ we define 
    \[\chi_T^{\operatorname{vir}}(X,V):=\chi_T^{\operatorname{vir}}\left(X^T,\frac{V|_{X^T}}{\wedge_{-1}^{\bullet}[N^{\operatorname{vir,\vee}}]}\right)\in K_0^T(\operatorname{pt})_{\operatorname{loc}}.\]
    By the above, it agrees with the usual virtual Euler characteristic whenever $X$ is proper. 
\end{definition}
\begin{example}
    Suppose $X^T$ is finite and reduced. Note that for a sum of one-dimensional representations $V=\sum\alpha_i -\sum\beta_j\in K_T^0(\operatorname{pt})$ we have that
    \[\wedge_{-1}^{\bullet}V=\frac{\prod(1-\alpha_i)}{\prod(1-\beta_j)}.\]
    We further note that $\mathbb{L}_{\operatorname{pt}}=0$ so that an equivariant perfect obstruction theory is simply given by a $T$-representation $E$ with $\mathbb{E}=E[-1]$. It follows that 
    \[\mathcal{O}_{\operatorname{pt}}^{\operatorname{vir}}=\wedge_{-1}^{\bullet}[E].\]
    If $\mathbb{E}$ has only trivial weights (as will be the case for $\mathbb{E}_{x}^{\operatorname{fix}})$ we get
    \[\oo_{\operatorname{pt}}^{\operatorname{vir}}=\left\{\begin{array}{cl}
        \mathcal{O}_{\operatorname{pt}} &\text{if } \mathbb{E}=0\\
        0 & \text{else}
    \end{array}\right.\]
    since this is true for $\wedge^{\bullet}_{-1}[E]$. We conclude that the localization formula in this case is given by
    \[\mathcal{O}_{X}^{\operatorname{vir}}=\iota_*\sum_{\underset{\mathbb{E}^{\operatorname{fix}}_{x}=0}{x\in X^T}}\frac{[\mathcal{O}_x]}{\wedge_{-1}^{\bullet}(T_{X,x}^{\operatorname{vir},\vee})}\]
    and in particular writing $T_{X,x}^{\operatorname{vir}}=\sum \alpha_{x,i}-\sum\beta_{x,j}$ we get
    \[\chi_T^{\operatorname{vir}}(X,V)=\sum_{\underset{\mathbb{E}^{\operatorname{fix}}_{x}=0}{x\in X^T}}\frac{V_x\prod(1-\overline{\beta}_{x,j})}{\prod(1-\overline{\alpha}_{x,i})}\]
    where $\overline{(-)}:K_T^0(\operatorname{pt)}\to K_T^0(\operatorname{pt})$ is given by linearly extending $t_i^{\pm}\mapsto t_i^{\mp}$.
\end{example}
\subsection{Relative perfect obstruction theories}
Let $f:X\to Y$ be a map of schemes. A relative perfect obstruction theory is a map 
\[\phi_f:\mathbb{E}_f\to \mathbb{L}_f\]
in $D^{[-1,0]}(X)$ satisfying that $h^0(\phi)$ is an isomorphism and that $h^{-1}(\phi)$ is surjective. A compatible triple is a commuting diagram 
\begin{center}
    \begin{tikzcd}
        f^*\mathbb{E}_{Y} \arrow{d}{f^*\phi_Y} \arrow{r} & \mathbb{E}_X\arrow{d}{\phi_X} \arrow{r} & \mathbb{E}_f \arrow{d}{\phi_f} \\
        f^*\mathbb{L}_Y \arrow{r} & \mathbb{L}_X \arrow{r} & \mathbb{L}_f
    \end{tikzcd}
\end{center}
where the rows are distinguished triangles with the lower row the fundamental triangle and such that $\phi_X,\phi_Y$ and $\phi_f$ are perfect obstruction theories. Given a compatible triple \cite{virtpull} shows that we get a virtual pullback 
\[f^!:A_*(Y)\to A_*(X)\]
such that $f^![Y]^{\operatorname{vir}}=[X]^{\operatorname{vir}}$. Similarly, \cite{Qu2018-uo} constructs a virtual pullback
\[f^!:K_0(Y)\to K_0(X)\]
such that $f^!\mathcal{O}_{Y}^{\operatorname{vir}}\to \mathcal{O}_X^{\operatorname{vir}}$. Both of these pullbacks are bivariant classes in the sense of \cite{Fult}. We briefly show a way to construct a compatible relative perfect obstruction theory when we have two different zero loci descriptions.
\begin{lemma} \label{relative POT criterion}
    For $i=1,2$ let $Y_i$ be a smooth scheme with a locally free sheaf $E_i\in \operatorname{Coh}(Y_i)$ and a section $s_i\in H^0(Y_i,E_i)$. Let $Z_i=Z(s_i)$ equipped with the perfect obstruction theory $\phi_i:\mathbb{E}_i\to \mathbb{L}_{Z_i}$ coming from being exhibited as a zero locus of a section of a locally free sheaf. Suppose we have a morphism of schemes 
    \[f:Y_1\to Y_2\]
    and a morphism of sheaves 
    \[\psi: E_1\to f^*E_2\]
    such that $\psi(s_1)=f^*s_2$ satisfying the transversality condition that the map of sheaves on $Z_1$
    \[f^*(E_2^{\vee}|_{Z_2})\xrightarrow{\left(\begin{array}{c}
        -\text{d}s_2^{\vee}   \\
         \psi^{\vee}
    \end{array}\right)} f^*(\Omega_{Y_2}|_{Z_2})\oplus E_1^{\vee}|_{Z_1}\] 
    is injective. Then from $f,\psi$ we can define a map fitting into a commutative diagram 
    \begin{center}
        \begin{tikzcd}
            f^*\mathbb{E}_2 \arrow{d}{f^*\phi_2} \arrow[dashed]{r} & \mathbb{E}_1 \arrow{d}{\phi_1}\\
            f^*\mathbb{L}_{Z_2} \arrow{r} & \mathbb{L}_{Z_1}
        \end{tikzcd}
    \end{center}
    such that the induced map of cones 
    \[\mathbb{E}_{1,2}:=\operatorname{cone}(f^*\mathbb{E}_2\to \mathbb{E}_1)\to \operatorname{cone}(f^*\mathbb{L}_{Z_2}\to \mathbb{L}_{Z_1})=\mathbb{L}_{Z_1/Z_2}\]
    is a relative perfect obstruction theory.
\end{lemma}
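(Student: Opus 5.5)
Write $I_i\subset\mathcal{O}_{Y_i}$ for the ideal of $Z_i$, and recall from \cref{simple zero locus} that
\[
\mathbb{E}_i=\big[E_i^\vee|_{Z_i}\xrightarrow{d s_i^\vee}\Omega_{Y_i}|_{Z_i}\big],\qquad
\mathbb{L}_{Z_i}=\big[I_i/I_i^2\xrightarrow{d}\Omega_{Y_i}|_{Z_i}\big],
\]
with $\phi_i=(s_i^\vee,\mathrm{id})$. The plan is to build the dashed arrow as an honest map of two-term complexes, check commutativity at chain level, and then compute the cone cohomology directly.

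\textbf{Step 1: the map $f^*\mathbb{E}_2\to\mathbb{E}_1$.} Since $\psi(s_1)=f^*s_2$, we get $s_1^\vee\circ\psi^\vee=f^*s_2^\vee$. Hence $f^*I_2\to\mathcal{O}_{Y_1}$ lands in $I_1$, and $f$ restricts to $Z_1\to Z_2$. On $Z_1$ I take $\psi^\vee|_{Z_1}:f^*E_2^\vee\to E_1^\vee$ in degree $-1$ and $df^\vee:f^*\Omega_{Y_2}\to\Omega_{Y_1}$ in degree $0$. This is a chain map: the required identity is $d(s_1^\vee\psi^\vee e)=df^\vee(d(s_2^\vee e))$, and it follows from $s_1^\vee\psi^\vee=f^*s_2^\vee$ together with the chain rule. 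The bottom arrow $f^*\mathbb{L}_{Z_2}\to\mathbb{L}_{Z_1}$ is likewise $(f^*I_2/I_2^2\to I_1/I_1^2,\ df^\vee)$, which represents the canonical map of truncated cotangent complexes. The square then commutes on the nose, again by $s_1^\vee\psi^\vee=f^*s_2^\vee$ in degree $-1$ and trivially in degree $0$. Since all the maps are honest chain maps, the mapping cones are well defined and there is an induced map between them.

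\textbf{Step 2: the cones.} The cone is
\[
\mathbb{E}_{1,2}=\big[f^*E_2^\vee\to f^*\Omega_{Y_2}\oplus E_1^\vee\to\Omega_{Y_1}\big]
\]
in degrees $[-2,0]$, and the map out of $f^*E_2^\vee$ is exactly the matrix $(-ds_2^\vee,\psi^\vee)$, up to sign conventions. The transversality hypothesis says this map is injective, so $h^{-2}=0$. Because all terms are locally free, $\mathbb{E}_{1,2}$ is therefore quasi-isomorphic to the two-term complex $[Q\to\Omega_{Y_1}|_{Z_1}]$ with $Q$ the cokernel. Note that $Q$ is locally free precisely when the injection is fiberwise injective, and I expect the intended reading of the hypothesis to give this. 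If only sheaf-injectivity is given, $\mathbb{E}_{1,2}$ still lies in $D^{[-1,0]}$ with perfect amplitude, which suffices.

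\textbf{Step 3: the perfect obstruction theory conditions.} Use the long exact sequences in cohomology of the two triangles and the map between them, then apply the five lemma. Since $\phi_1,\phi_2$ are perfect obstruction theories, $h^0(\phi_i)$ is an isomorphism, $h^{-1}(\phi_i)$ is surjective, and $f^*$ is right exact on $h^0$. This gives directly that $h^0(\mathbb{E}_{1,2})\to h^0(\mathbb{L}_{Z_1/Z_2})=\Omega_{Z_1/Z_2}$ is an isomorphism. For surjectivity of $h^{-1}$, there is the exact sequence
\[
h^{-1}(\mathbb{E}_1)\to h^{-1}(\mathbb{E}_{1,2})\to h^0(f^*\mathbb{E}_2)\to h^0(\mathbb{E}_1),
\]
and similarly for the $\mathbb{L}$'s, where $h^{-1}(\mathbb{L}_{Z_1/Z_2})$ maps to the kernel of $f^*\Omega_{Z_2}\to\Omega_{Z_1}$. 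A four-lemma diagram chase, using that $h^{-1}(\phi_1)$ is surjective and $h^0$ of both $\phi_1$ and $f^*\phi_2$ are isomorphisms, gives surjectivity.

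The main obstacle is bookkeeping. One must check that the chain-level cone agrees with the canonical relative cotangent complex $\mathbb{L}_{Z_1/Z_2}$, which holds after truncation $\tau_{\geq-1}$ because the cone of truncated cotangent complexes maps to $\tau_{\geq -1}\mathbb{L}_{Z_1/Z_2}$ and is an isomorphism on $h^0$ and surjective on $h^{-1}$. One must also make sure the sign conventions in the cone match the matrix in the hypothesis.
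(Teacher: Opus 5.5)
Your proof follows the paper's proof. The dashed arrow is $\psi^\vee$ in degree $-1$ and pullback of forms in degree $0$. The cone is the explicit three-term complex with first differential $(-\mathrm{d}s_2^\vee,\psi^\vee)$, and transversality kills $h^{-2}$. The four-lemma then gives surjectivity on $h^{-1}$ and the five-lemma gives the isomorphism on $h^0$.

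The one thing to correct is the fallback sentence in Step 2, which says that sheaf-injectivity alone still gives perfect amplitude $[-1,0]$. That is false. For a complex $F^{-2}\to F^{-1}\to F^0$ of locally free sheaves, Tor-amplitude in $[-1,0]$ holds iff $F^{-2}\otimes k(p)\to F^{-1}\otimes k(p)$ is injective at every point $p$. That is exactly your condition that $Q$ be locally free.

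Here is a counterexample. Take $Y_1=Y_2=\A^1=\operatorname{Spec}\C[t]$, $f=\id$, $E_1=E_2=\oo$, $s_1=s_2=0$, and $\psi$ multiplication by $t$.
\begin{itemize}
\item The map $(-\mathrm{d}s_2^\vee,\psi^\vee)=(0,t)$ is injective as a map of sheaves on $Z_1=\A^1$.
\item However, $\mathbb{E}_{1,2}\simeq \C_0[1]$ with $\C_0=\oo_{\A^1}/(t)$, and its derived fibre at $0$ has cohomology in degree $-2$.
\item It satisfies the $h^0$/$h^{-1}$ conditions against $\mathbb{L}_{Z_1/Z_2}=0$, yet it is not of perfect amplitude $[-1,0]$, so it gives no virtual pullback.
\end{itemize}

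So you must commit to the fibrewise reading of the hypothesis. That reading holds in every application in the paper. When $\psi$ is a surjection of bundles, $\psi^\vee$ is a subbundle. In \cref{forget small subscheme} the cyclicity argument works fibre by fibre. The paper's own proof stops at $h^{-2}(\mathbb{E}_{1,2})=0$ and tacitly relies on this reading as well. Your remark about $Q$ is exactly the repair that proof needs.
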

\begin{proof}
The condition $\psi s_1=s_2f$ implies in particular that $f$ restricts to a map of schemes 
\[f:Z_1\to Z_2.\]
The condition further implies that 
\begin{center}
    \begin{tikzcd}
        f^*(E_2^{\vee}|_{Z_2}) \arrow{d}{\text{d}f^*s_2^{\vee}} \arrow{r}{\psi^\vee} & E_1^{\vee}|_{Z_1} \arrow{d}{\text{d}s_1^{\vee}} \\
        f^*(\Omega_{Y_2}|_{Z_2}) \arrow{r}{df} & \Omega_{Y_1}|_{Z_1}
    \end{tikzcd}
\end{center}
commutes giving us the desired map $f^*\mathbb{E}_2\to \mathbb{E}_1$ fitting into the commutative diagram. Since $f^*\mathbb{E}_2,\mathbb{E}_1$ are perfect of amplitude $[-1,0]$ we have that the cone $\mathbb{E}_{1,2}$ is perfect of amplitude $[-2,0]$. It is explicitly given by the complex
\[f^*(E_2^{\vee}|_{Z_2})\xrightarrow{\left(\begin{array}{c}
        -\text{d}s_2^{\vee}   \\
         \psi^{\vee}
    \end{array}\right)} f^*(\Omega_{Y_2}|_{Z_2})\oplus E_1^{\vee}|_{Z_1}\xrightarrow{\left(\text{d}f,\text{d}s_1^{\vee}\right)}\Omega_{Y_1}|_{X_1}\] 
so that the transversality condition exactly implies that $h^{-2}(\mathbb{E}_{1,2})=0$. Finally, taking the map of long exact sequences induced by the map of triangles we get the commuting diagram
\begin{center}
    \begin{tikzcd}[column sep=small]
        h^{-1}(\mathbb{E}_{1}) \arrow{r} \arrow[two heads]{d} & h^{-1}(\mathbb{E}_{1,2}) \arrow{r} \arrow{d} &h^{0}(f^*\mathbb{E}_{2})\arrow{r} \arrow{d}{\rotatebox{90}{$\sim$}} &h^{0}(\mathbb{E}_{1}) \arrow{r} \arrow{d}{\rotatebox{90}{$\sim$}} & h^{0}(\mathbb{E}_{1,2}) \arrow{r} \arrow{d} & 0 \arrow[equal]{d} \arrow{r} & 0 \arrow[equal]{d}\\
        h^{-1}(\mathbb{L}_{Z_1}) \arrow{r} & h^{-1}(\mathbb{L}_{Z_1/Z_2}) \arrow{r} &h^{0}(f^*\mathbb{L}_{Z_2})\arrow{r} &h^{0}(\mathbb{L}_{Z_1}) \arrow{r} & h^{0}(\mathbb{L}_{Z_1/Z_2}) \arrow{r} & 0 \arrow{r} &0 \\
    \end{tikzcd}
\end{center}
and from the 4-lemma we conclude that $h^{-1}(\mathbb{E}_{1,2})\to h^{-1}(\mathbb{L}_{Z_1/Z_2})$ is an epimorphism and from the 5-lemma we conclude that $h^0(\mathbb{E}_{1,2})\to h^0(\mathbb{L}_{Z_1/Z_2})$ is an isomorphism. We conclude that $\mathbb{E}_{1,2}\to \mathbb{L}_{Z_1/Z_2}$ is a perfect obstruction theory as desired.
\end{proof}
\begin{remark}
    Note that the transversality requirement is automatic if $\psi$ is surjective. In this case we get a short exact sequence 
    \[0\to K\to E_1\to f^*E_2\to 0\]
    for $K=\operatorname{ker}\psi$. This yields the identity
    \[\wedge_{-1}^{\bullet}[E_1^{\vee}]=(\wedge_{-1}^{\bullet}[K^{\vee}])\cdot (f^*\wedge_{-1}^{\bullet}[E_2^{\vee}])\]
    in $K_0(Y)$ and so in particular by the deifnition of the virtual structure sheaf we get that
    \[\oo_{Z_1}^{\operatorname{vir}}=(\wedge_{-1}^{\bullet}[K^{\vee}|_{Z_1}])\cdot \oo_{Z_2}^{\operatorname{vir}}\]
    in $K_0(Z_1)$.
\end{remark}
\subsection{Fixed points of the nested Hilbert scheme of points}
Recall that for the induced $T=\Gm^n$ action on $\operatorname{NHilb}^{\underline{d}}(\A^n)$ the fixed locus is finite and reduced and corresponds to nestings of monomial ideals. These ideals are in particular in bijection with higher dimensional Young diagrams. We briefly recall the definition.
\begin{definition}
    An $n$-dimensional Young diagram of size $d$ is a subset $\lambda\subset \Z^n_{\geq 0}$ which satisfies that for all $\mathbf{a}=(a_1,\dots, a_n)\in \Z^{n}_{\geq 0}$ and $\mathbf{b}=(b_1,\dots, b_n)\in \lambda$ then $a_i\leq b_i$ for $i=1,\dots, n$ implies $a\in \lambda$. A nested $n$-dimensional Young diagram $\lambda_\bullet$ of size $\underline{d}=(d_0,\dots, d_r)$ is a chain
    \[\varnothing=\lambda_0\subset \lambda_1\subset \cdots \subset \lambda_{r+1}\]
    such that $\lambda_i$ is an $n$-dimensional Young diagram for $i=1,\dots, r+1$ and such that 
    \[|\lambda_{i+1}|-|\lambda_i|=d_i.\]
\end{definition}
We recall that for a nested $n$-dimensional Young diagram of size $\underline{d}$ we get a nesting of monomial ideals
\[I_{r+1}\subset \cdots \subset I_1\]
by setting
\[I_i=I_{\lambda_i}=\langle \mathbf{x}^{\mathbf{a}}\;|\; \mathbf{a}\notin \lambda_i\rangle\subset \C[x_1,\dots, x_n].\]
This bijectively describes all $T$-fixed points of $\operatorname{NHilb}^{\underline{d}}(\A^n)$ when $\lambda_{\bullet}$ ranges over all $n$-dimensional nested Young diagrams of size $\underline{d}$. We recall that with our convention of torus action we have 
\[[\mathcal{O}_{Z_i}]=[\C[x_1,\dots, x_n]/I_{\lambda_i}]=\sum_{a\in \lambda_i}\mathbf{t}^{\mathbf{a}}\in K_0^T(\operatorname{pt})\cong \Z[t_1^{\pm},\dots, t_n^{\pm}].\]
\section{The non-commutative nested Hilbert scheme}
In this section, we will show the existence of smooth moduli spaces which we call the nested non-commutative Hilbert scheme. The non-commutative Hilbert scheme was first introduced by Nori \cite{Nori1977Appendix} and has been used successfully to derive various results on the Hilbert scheme of points. A nested version for the non-commutative Hilbert scheme with a full dimension vector (i.e. where the difference in lengths of the subschemes of the nesting are constant 1) on $\C^2$ was considered in \cite{Oblomkov2018-ww} using a different construction to avoid non-reductive quotients. 

This section is devoted to defining the nested non-commutative Hilbert scheme on $\C^n$ for any dimension vector. We will introduce it as the quotient stack of a parabolic subgroup acting freely on a smooth variety. This immediately implies that the corresponding stack is smooth and an algebraic space. We will then identify its functor of points and provide explicit Zariski coverings, taking our inspiration from the proof of the representability of the Hilbert scheme of points found in \cite{GUSTAVSEN2007705}. We can then construct an explicit embedding from the nested Hilbert scheme of points using the functor of points.

\subsection{Constructions}
For this section, we fix $r\geq 0$, a dimension vector $\underline{d}=(d_0,\dots, d_{r})\in \Z_{\geq 0}^{r+1}$ and a reference flag of $\C$-vectorspaces 
\[N_\bullet= (N_{0}\supset N_{1}\supset \cdots \supset N_{r}=0)\]
such that $\operatorname{dim}_{\C}N_i/N_{i+1}=d_i$. We will denote by $\Hom^{\operatorname{fil}}_{\C}$ and $\operatorname{End}^{\operatorname{fil}}_{\C}$ the flag preserving $\C$-linear homomorphisms and endomorphisms. We let $P_{\underline{d}}\subset \operatorname{GL}(N_0)$ be the parabolic subgroup of flag-preserving automorphisms. 
\begin{definition}
    We define 
    \[\widetilde{\operatorname{ncNHilb}^{\underline{d}}}(\A^n)\subset \operatorname{End}_{\C}^{\operatorname{fil}}(N_\bullet)^{\oplus n}\oplus N_0\]
    given by the Zariski open locus of tuples $(A_1,\dots, A_n,v)$ where 
    \[\C\langle A_1,\dots, A_n\rangle v=N_0. \]
    Equivalently, it consists of the tuples $(A_1,\dots, A_n,v)$ where the smallest subspace of $N_0$ closed under $A_1,\dots, A_n$ and containing $v$ is $N_0$ itself.
\end{definition}

\begin{definition}
    We let $P_{\underline{d}}$ act on $\operatorname{End}_{\C}^{\operatorname{fil}}(N_\bullet)^{\oplus n}\oplus N_0$ by 
    \begin{align*}
        g.(A_1,\dots, A_n,v)=(A_1^g,\dots , A_n^g,gv)
    \end{align*}
    where $A_i^g=gA_ig^{-1}$.
\end{definition}
\begin{lemma}
    The action of $P_{\underline{d}}$ restricts to a free action on $\widetilde{\operatorname{ncNHilb}^{\underline{d}}}(\A^n)$.
\end{lemma}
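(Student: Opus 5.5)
The plan has two parts: first check that the action preserves the open locus, then show that stabilizers are trivial using cyclicity.

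\emph{Invariance of the locus.} For $g\in P_{\underline{d}}$ we have
\[
g\,p(A_1,\dots,A_n)\,v=p(A_1^g,\dots,A_n^g)\,gv
\]
for every non-commutative polynomial $p\in \C\langle x_1,\dots,x_n\rangle$. Hence
\[
\C\langle A_1^g,\dots,A_n^g\rangle\, gv=g\,\C\langle A_1,\dots,A_n\rangle v=gN_0=N_0 .
\]
Conjugation by a flag-preserving automorphism keeps $A_i^g$ flag-preserving, so the action of $P_{\underline{d}}$ restricts to $\widetilde{\operatorname{ncNHilb}^{\underline{d}}}(\A^n)$.

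\emph{Freeness.} Suppose $g\in P_{\underline{d}}$ fixes the point $(A_1,\dots,A_n,v)$, so $gA_i=A_ig$ for all $i$ and $gv=v$. Then for every word $w$ in the $A_i$,
\[
g\,w(A)v=w(A)\,gv=w(A)v .
\]
The vectors $w(A)v$ span $N_0$ by cyclicity, so $g$ is the identity on $N_0$ and $g=\id$. Pointwise stabilizers are therefore trivial.

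\emph{Scheme-theoretic freeness.} If one wants freeness in the stronger sense, namely that the action map $P_{\underline{d}}\times \widetilde{\operatorname{ncNHilb}^{\underline{d}}}\to \widetilde{\operatorname{ncNHilb}^{\underline{d}}}\times\widetilde{\operatorname{ncNHilb}^{\underline{d}}}$ is a monomorphism, I would run the same argument with $R$-valued points for an arbitrary $\C$-algebra $R$. Here $g\in P_{\underline{d}}(R)$, and the $A_i$ are $R$-linear flag-preserving endomorphisms of $N_0\otimes R$.

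The only subtle step is that cyclicity must hold over $R$. The open condition means that at each point of $\operatorname{Spec} R$, finitely many words $w_1(A)v,\dots,w_m(A)v$ span the fibre, with $m=\dim N_0$. Then the determinant of the matrix of these vectors is a unit locally on $\operatorname{Spec}R$, so by Nakayama the words $w(A)v$ generate the free module $N_0\otimes R$.

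Given this, the identity $g\,w(A)v=w(A)v$ forces $g=\id$ over $R$. The stabilizer group scheme is therefore trivial, so the action is free.
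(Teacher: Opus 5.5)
Your proof is correct and is essentially the paper's argument. The paper phrases both steps with invariant subspaces rather than spanning words: for invariance it pulls back an $A_i^g$-stable subspace $M\ni gv$ to $g^{-1}M$, and for freeness it observes that $\ker(g-\id)$ is $A_i$-stable and contains $v$, which is equivalent to your word-by-word computation. Your extra $R$-point argument goes beyond the paper, which checks only $\C$-points. It makes explicit the triviality of stabilizer group schemes that the paper uses implicitly when it calls the quotient an algebraic space.
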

\begin{proof}
    Let $(A_1,\dots, A_n,v)\in \widetilde{\operatorname{ncNHilb}^{\underline{d}}}(\A^n)$ and let $g\in P_{\underline{d}}$. If $M\subset N_0$ is invariant under $A_1^g,\dots, A_n^g$ and contains $gv$ then $g^{-1}M$ is invariant under $A_1,\dots, A_n$ and $v\in g^{-1}M$ hence by stability $g^{-1}M=N_0$ and so $M=N_0$ proving that $(A_1^g,\dots, A_n^g,gv)$ is stable. This shows that $\widetilde{\operatorname{ncNHilb}^{\underline{d}}}(\A^n)$ is closed under the $P_{\underline{d}}$ action. If $(A_1^g,\dots, A_n^g,gv)=(A_1,\dots, A_n,v)$ then $\operatorname{ker}(g-\operatorname{id}_{N_0})$ contains $v$ and is invariant under $A_1,\dots A_n$ hence $g=\operatorname{id}$ thus proving that the action is free. 
\end{proof}

\begin{definition}
    We define 
    \[ \operatorname{ncNHilb}^{\underline{d}}(\A^n)=\left[ \widetilde{\operatorname{ncNHilb}^{\underline{d}}}(\A^n)\big{/}P_{\underline{d}}\right]\]
    which we call the \textit{the non-commutative nested Hilbert scheme}. Here $[X/G]$ denotes the resulting quotient stack. Note that since we are using a smooth algebraic group acting freely on a smooth space the resulting quotient is a smooth algebraic space. When $r=1$ so that $\underline{d}=d_0$ we will drop the "N" prefix in the notation and simply write $\operatorname{ncHilb}^{d_0}(\A^n)$ which we will simply refer to as \textit{the non-commutative Hilbert scheme}. In this case it agrees with the original construction, see for example \cite{Reineke2005-xv}
\end{definition}
Next we identify the functor of points by making the obvious generalization of the stable data above from vector spaces to vector bundles. 
\begin{definition}
    Let $S\in \operatorname{Sch}_{/\C}$. A \textit{non-commutative stable tuple} of type $\underline{d},n$ on $S$ consists of the data $(\mathcal{F}_\bullet, \phi_1,\dots, \phi_n,s)$ where 
    \begin{enumerate}
        \item $\mathcal{F}_\bullet$ is a flag of locally free sheaves
        \[\mathcal{F}_\bullet=(\mathcal{F}_0\supset \cdots \supset \mathcal{F}_m=0)\]
        such that $\mathcal{F}_i/\mathcal{F}_{i+1}$ is locally free of rank $d_i$.
        \item $\phi_1,\dots, \phi_n$ are operators 
        \[\phi_1,\dots, \phi_n: \mathcal{F}_0\to \mathcal{F}_0\]
        such that $\phi_i(\mathcal{F}_j)\subset \mathcal{F}_j$.
        \item $s\in H^0(S,\mathcal{F}_0)$ is a global section such that the induced map 
        \begin{center}
            \begin{tikzcd}
                \oo_S\langle x_1,\dots, x_n\rangle\arrow{r} & \mathcal{F}_0 \\[-20pt]
                f(x_1,\dots, x_n) \arrow[mapsto]{r} & f(\phi_1,\dots, \phi_n)s
            \end{tikzcd}
        \end{center}
    is an epimorphism. 
    \end{enumerate}
    A \textit{framed non-commutative stable tuple} of type $(\underline{d},n)$ is the data of $(\mathcal{F}_\bullet, \phi_1,\dots, \phi_n,s,\beta)$ where $(\mathcal{F}_\bullet, \phi_1,\dots, \phi_n,s)$ is a non-commutative stable tuple and 
    \[\beta: \oo_S\otimes_{\C} N_\bullet \to \mathcal{F}_\bullet\]
    is an isomorphism of flags. Two non-commutative stable tuples $(\mathcal{F}_\bullet, \phi_1,\dots, \phi_n,s)$ and $(\mathcal{F}_\bullet', \phi_1',\dots, \phi_n',s')$ (respectively framed non-commutative stable tuples $(\mathcal{F}_\bullet, \phi_1,\dots, \phi_n,s,\beta)$ and $(\mathcal{F}_\bullet', \phi_1',\dots, \phi_n',s',\beta')$) are said to be equivalent if there exists an isomorphism of flags $\rho: \mathcal{F}_\bullet \to \mathcal{F}_\bullet'$ such that $\phi_i'=\rho \phi_i\rho^{-1}$ and $s'=\rho s$ (respectively also $\beta'=\rho \beta$).
    \end{definition}

\begin{definition}
    We define the functor
    \[nc\mathcal{H}ilb^{\underline{d},n},: \operatorname{Sch}_{/\C}^{\operatorname{op}}\to \operatorname{Set}\]
    as follows. For $S\in \operatorname{Sch}_{/\C}$ we let 
    \begin{align*}
        nc\mathcal{H}ilb^{\underline{d},n}(S)=\{(\mathcal{F}_\bullet, \phi_1,\dots, \phi_n,s) \text{ non-commutative stable tuple} \}/\sim
    \end{align*}
    and for $f:S'\to S$ we simply define 
    \[f^*(\mathcal{F}_\bullet, \phi_1,\dots, \phi_n,s)=(f^*\mathcal{F}_\bullet, f^*\phi_1,\dots, f^*\phi_n,f^*s).\]
    We similarly define a framed version
       \[\widetilde{nc\mathcal{H}ilb^{\underline{d},n}}: \operatorname{Sch}_{/\C}^{\operatorname{op}}\to \operatorname{Set}\]
    by
    \[\widetilde{nc\mathcal{H}ilb^{\underline{d},n}}(S)=\{(\mathcal{F}_\bullet, \phi_1,\dots, \phi_n,s,\beta) \text{ framed non-commutative stable tuple} \}/\sim\]
    and 
    \[f^*(\mathcal{F}_\bullet, \phi_1,\dots, \phi_n,s,\beta)=(f^*\mathcal{F}_\bullet, f^*\phi_1,\dots, f^*\phi_n,f^*s,f^*\beta).\]
\end{definition}
\begin{prop}
    The map
        \begin{align*}
        \widetilde{nc\mathcal{H}ilb^{\underline{d},n}}\to nc\mathcal{H}ilb^{\underline{d},n}
    \end{align*}
    given by forgetting the framing is isomorphic to the functor of points of the quotient map 
    \begin{align*}
        \widetilde{\operatorname{ncNHilb}^{\underline{d}}}(\A^n)\to \operatorname{ncNHilb}^{\underline{d}}(\A^n).
    \end{align*}
\end{prop}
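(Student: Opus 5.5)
The plan is to identify the framed functor with the functor of points of $\widetilde{\operatorname{ncNHilb}^{\underline{d}}}(\A^n)$, and then to recognise the forgetful map as a $P_{\underline{d}}$-torsor compatible with the quotient-stack description.

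\emph{Framed functor.} A framing $\beta:\oo_S\otimes N_\bullet\to\mathcal{F}_\bullet$ lets us transport the data back along $\beta$. This turns every framed stable tuple into an equivalent one with $\mathcal{F}_\bullet=\oo_S\otimes N_\bullet$ and $\beta=\id$. Moreover, two such normalized tuples are equivalent only via $\rho=\id$, since $\rho$ must satisfy $\rho\beta=\beta'$ with $\beta=\beta'=\id$. Hence
\[\widetilde{nc\mathcal{H}ilb^{\underline{d},n}}(S)=\{(A_1,\dots,A_n,v)\in (\operatorname{End}^{\operatorname{fil}}(N_\bullet)^{\oplus n}\oplus N_0)(S)\;|\;\text{stability}\}.\]
That is, these are morphisms $S\to \operatorname{End}^{\operatorname{fil}}_{\C}(N_\bullet)^{\oplus n}\oplus N_0$ for which $\oo_S\langle x\rangle\to\oo_S\otimes N_0$ is surjective.

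It remains to check that this surjectivity condition cuts out exactly the open subscheme $\widetilde{\operatorname{ncNHilb}^{\underline{d}}}(\A^n)$. Surjectivity can already be tested on words of length $<d:=\dim N_0$, by the usual stabilization argument for the increasing chain of spans of words of bounded length. It therefore amounts to surjectivity of a map of finite free modules $\oo_S^{M}\to\oo_S^{d}$. By Nakayama this holds if and only if it holds on every fibre $k(s)$, i.e. if and only if some $d\times d$ minor is invertible. So the condition is represented by the pullback of the open locus defined by the minors, and this open locus is $\widetilde{\operatorname{ncNHilb}^{\underline{d}}}(\A^n)$ on $\C$-points.

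\emph{Forgetful map and torsor structure.} The group $P_{\underline{d}}(S)$ of flag automorphisms of $\oo_S\otimes N_\bullet$ acts on framings by precomposition. Under the identification above this is exactly the conjugation action $g.(A,v)=(gAg^{-1},gv)$, up to replacing $g$ by $g^{-1}$. The fibres of the forgetful map are therefore $P_{\underline{d}}$-orbits. Every $\mathcal{F}_\bullet$ is Zariski-locally isomorphic to $\oo_S\otimes N_\bullet$, because a filtered locally free sheaf with locally free graded pieces admits local splittings. Consequently the forgetful map is a Zariski-locally trivial $P_{\underline{d}}$-torsor.

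Conversely, given $S\to[\widetilde{\operatorname{ncNHilb}}/P_{\underline{d}}]$, i.e. a $P_{\underline{d}}$-torsor $Q\to S$ with an equivariant map $Q\to\widetilde{\operatorname{ncNHilb}}$, we construct a stable tuple. We form $\mathcal{F}_\bullet=Q\times^{P_{\underline{d}}}N_\bullet$, and the operators and section descend by equivariance. These two constructions are mutually inverse, and objects on both sides have no nontrivial automorphisms because the action is free. Hence the quotient stack, which is an algebraic space, has functor of points $nc\mathcal{H}ilb^{\underline{d},n}$, and the quotient map corresponds to forgetting the framing.

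The main obstacle is making this last identification precise. First, the torsors of the stack quotient are fppf torsors, while the functor uses Zariski-local data, so we need that $P_{\underline{d}}$-torsors are Zariski-locally trivial. I would argue this using the fact that $P_{\underline{d}}$ is a semidirect product of $\prod\operatorname{GL}_{d_i}$ with a unipotent radical, both of which are special groups. Second, we must check that the stability condition descends along the torsor, which holds because surjectivity is local on $S$.
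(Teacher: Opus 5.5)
Your proposal is correct and follows essentially the same route as the paper: you normalize the framing to identify $\widetilde{nc\mathcal{H}ilb^{\underline{d},n}}$ with $\widetilde{\operatorname{ncNHilb}^{\underline{d}}}(\A^n)$, then pass to the quotient by Zariski-locally trivializing the flag and gluing. The paper does the same, gluing the maps $\pi f_i$ via the transition functions $\gamma_{j,i}$ and getting the reverse direction by descending the universal tuple, which is your associated-bundle construction $Q\times^{P_{\underline{d}}}N_\bullet$. Your extra checks fill in points the paper leaves implicit: openness of the stability condition (via words of length $<d$ and Nakayama) and the Zariski-versus-fppf torsor issue.
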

\begin{proof}
    Set $X= \operatorname{End}_{\C}^{\operatorname{fil}}(N_\bullet)^{\oplus n} \oplus N_0$ viewing the vector space as a scheme (i.e. by taking $\spec \sym^\bullet (-)^\vee$). Let $V_\bullet=N_\bullet \otimes_{\C} \oo_X$ which is a flag of locally free sheaves on $X$. We construct universal maps on $V_\bullet$ as follows. We set $d=|\underline{d}|=d_0+\cdots +d_r$ and choose a basis $v_1,\dots, v_d$ of $N_0$ such that 
    \[\operatorname{span}(v_{d_0+\cdots +d_{i-1}+1},\dots,v_d)=N_i.\]
    For $1\leq i \leq d$ define we $w(i)=l$ if 
    \[d_0+\cdots d_{l-1}<i\leq d_0+\cdots +d_l\]
    i.e. if and only if $v_i\in N_l\backslash N_{l+1}$. Writing everything in the coordinates of the basis we get
    \[X\cong \spec \C[a_{i,j}^k,\alpha_m]\]
    where the indices ranges over $1\leq k\leq n$, $1\leq i,j\leq d$ with $w(j)\leq w(i)$ and $1\leq m\leq d$. Explicitly, for a point $(A_1,\dots, A_n,v)\in X$ we have 
    \begin{align*}
   A_k(v_j)=\sum_{i}a_{i,j}^kv_i, && v=\sum_m\alpha_mv_m.
    \end{align*}
    We define
    \begin{align*}
      \widetilde{\Phi}_k:V_0\to V_0, && \widetilde{\mathfrak{v}}:\oo_X\to V_0
    \end{align*}
    by 
    \begin{align*}
        \widetilde{\Phi}_k(v_j\otimes 1)=\sum_{i} v_i\otimes a_{i,j}^k,  && \widetilde{\mathfrak{v}}(1)=\sum_{m}v_m\otimes \alpha_m.
    \end{align*} 
    Note also that the identity map $\beta_0:=\operatorname{id}_{V_0}$ is a framing of $V_\bullet$. By definition $(V_\bullet, \widetilde{\Phi}_1,\dots, \widetilde{\Phi}_n,\widetilde{\mathfrak{v}},\beta_0)$ is a framed non-commutative tuple when restricted to $\widetilde{\operatorname{ncNHilb}^{\underline{d}}}(\A^2)$, which then in turn classifies a map 
    \[\widetilde{\operatorname{ncNHilb}^{\underline{d}}}(\A^2)\to \widetilde{nc\mathcal{H}ilb^{\underline{d},n}}.\]
    Conversely, suppose we have a map $S\to \widetilde{nc\mathcal{H}ilb^{\underline{d},n}}$ classified by a framed non-commutative tuple $(\mathcal{F}_\bullet,\varphi_1,\dots, \varphi_n,s,\beta)$. Then we can view $(\beta^{-1}\varphi_1\beta,\dots ,\beta^{-1}\varphi_n\beta ,\beta^{-1}s)$ as a map 
    \[\oo_S\to \left(\operatorname{End}^{\operatorname{fil}}(N_\bullet)^{\oplus n} \oplus N_0\right)\otimes_{\C} \oo_S\]
    which in turn classifies a map $f:S\to X$ in such a way that 
    \[f^*(V_\bullet, \widetilde{\Phi}_1,\dots, \widetilde{\Phi}_n,\widetilde{\mathfrak{v}},\beta_0)\sim (\mathcal{F}_\bullet,\varphi_1,\dots, \varphi_n,s ,\beta)\]
    via $\beta:f^*V_\bullet=N_\bullet\otimes_{\C}\oo_S\to \mathcal{F}_\bullet$. The various conditions on being a non-commutative tuple exactly translates to $f$ factoring through $\widetilde{\operatorname{ncNHilb}^{\underline{d}}}(\A^n)$, giving us the desired inverse. \\

    \noindent Next we note that $V_\bullet= N_\bullet \otimes_{\C} \oo$ on $\widetilde{\operatorname{ncNHilb}^{\underline{d}}}(\A^2)$ has a natural $P_{\underline{d}}$-equivariant structure by letting $P_{\underline{d}}=\operatorname{Aut}_{\C}^{\operatorname{fil}}(N_\bullet)$ act by multiplication on the fibers. This structure naturally makes $\widetilde{\Phi}_1,\dots, \widetilde{\Phi_n}$ and $\widetilde{\mathfrak{v}}$ equivariant. By properties of the quotient stack, pullback along the quotient induces an equivalence of categories 
    \[\operatorname{QCoh}(\operatorname{ncNHilb}^{\underline{d}}(\A^2))\cong\operatorname{QCoh}^{\operatorname{P_{\underline{d}}}}(\widetilde{\operatorname{ncNHilb}^{\underline{d}}}(\A^2))\]
    where the left hand side is equivariant quasi-coherent sheaves. From this we get an induced non-commutative tuple $(\mathcal{V}_\bullet, \Phi_1,\dots, \Phi_n,\mathfrak{v})$ on $\operatorname{ncNHilb}^{\underline{d}}(\A^2)$ such that $\pi^*(\mathcal{V}_\bullet, \Phi_1,\dots, \Phi_n,\mathfrak{v})=(V_\bullet, \widetilde{\Phi}_1,\dots, \widetilde{\Phi}_n,\widetilde{\mathfrak{v}})$, which in turn gives us a map 
    \[\operatorname{ncNHilb}^{\underline{d}}(\A^n)\to nc\mathcal{H}ilb^{\underline{d},n}.\]
    Conversely, given a map $S\to nc\mathcal{H}ilb^{\underline{d},n}$ classified by a non-associative tuple $(\mathcal{F}_\bullet, \varphi_1,\dots,\varphi_2,s)$ we can pick an open cover $\{U_i\}_{i}$ of $S$ and trivializations $\beta_i: \oo_{U_i}\otimes_{\C}N_\bullet\to \mathcal{F}_{\bullet}|_{U_i}$. It follows that $(\mathcal{F}_{\bullet}|_{U_i},\varphi_1,\dots, \varphi_2,s,\beta_i)$ is a framed non-associative tuple. Using the above, these classify maps
    \[f_i:U_i\to \widetilde{\operatorname{ncNHilb}^{\underline{d}}}(\A^2).\]
    On overlaps $U_{i,j}=U_i\cap U_j$ the transition function of $\beta_i,\beta_j$ determines a map
    \[\gamma_{j,i} :U_{j,i}\to P_{\underline{d}}\]
    such that $\gamma_{j,i}.\beta_i=\beta_j$. It follows that
    \[(\pi f_i)|_{U_{i,j}}=(\pi f_j)|_{U_{i,j}}\]
    so that $\{\pi f_i\}_i$ glues together to a map 
    \[S\to \operatorname{ncNHilb}^{\underline{d}}(\A^2)\]
    giving us the desired inverse.
\end{proof}
\begin{remark}
    We note that the proposition shows the existence of a universal non-commutative tuple on $\operatorname{ncNHilb}^{\underline{d}}(\A^n)$. We will denote the universal tuple by $(\mathcal{V}_\bullet, \Phi_1,\dots, \Phi_n,\mathfrak{v})$
\end{remark}
It now only remains to show that our moduli space is representable by a scheme. Since it is an algebraic space it satisfies fppf descent and hence also Zariski descent. It therefore only remains to construct a Zariski open cover of schemes. We note that the universal tuple in turn induces a universal epimorphism
\[ q_{nc}:\C\langle x_1,\dots, x_n \rangle\otimes_{\C}\oo\cong \oo \langle x_1,\dots, x_n \rangle\twoheadrightarrow \mathcal{V}_0\]
on $\operatorname{ncNHilb}^{\underline{d}}(\A^n)$. Inspired by \cite{GUSTAVSEN2007705} we consider loci of sections of these epimorphisms.

\begin{definition}
    For fixed $\gamma:N_0\to \C\langle x_1,\dots, x_n \rangle$ we let
    \begin{align*}
        \operatorname{ncNHilb}_{\gamma}^{\underline{d}}(\A^n)\subset\operatorname{ncNHilb}^{\underline{d}}(\A^n)
    \end{align*}
    be the loci where the composition 
    \[N_0\otimes_{\C} \oo \xrightarrow{\gamma\otimes \id} \oo\langle x_1,\dots, x_n\rangle \overset{q_{nc}}{\twoheadrightarrow} \mathcal{V}_0 \]
    is an isomorphism (or equivalently surjective). Note that this is a Zariski open as it is defined as the surjectivity locus of a map of sheaves. 
\end{definition}
\begin{prop}
    Let $\gamma:N_0\to \C\langle x_1,\dots, x_n \rangle$. Then $\operatorname{ncNHilb}_{\gamma}^{\underline{d}}(\A^n)$ is representable by a scheme.
\end{prop}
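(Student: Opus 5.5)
The plan is to show that $\operatorname{ncNHilb}_{\gamma}^{\underline{d}}(\A^n)$ is isomorphic to an explicit closed subscheme of an affine space. The model is the Gustavsen–Laksov–Skjelnes argument for the ordinary Hilbert scheme, where a fixed section of the universal quotient gives a trivialization of the tautological bundle and identifies the open piece with a space of structure constants.

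\textbf{Step 1: trivialize the universal flag.} On the open locus $U=\operatorname{ncNHilb}_{\gamma}^{\underline{d}}(\A^n)$, the composition $\sigma:=q_{nc}\circ(\gamma\otimes\id): N_0\otimes\oo_U\to\mathcal{V}_0$ is an isomorphism. Transport the flag along $\sigma$ to get $\sigma^{-1}(\mathcal{V}_\bullet)$. This is a flag of subbundles of the trivial bundle $N_0\otimes\oo_U$, but in general it is not the constant flag $N_\bullet\otimes \oo_U$.

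To handle this, I would first reduce to $\gamma$ adapted to the filtration. Shrinking further to the open locus where $\sigma^{-1}(\mathcal{V}_i)$ is transverse to a fixed complement is one option. The cleaner option is to note that the loci $U_\gamma$ for varying $\gamma$ cover the space. Indeed, at any point, cyclicity lets us choose words $w_1,\dots,w_d$ in $x_1,\dots,x_n$ whose images form a basis of $N_0$ adapted to the flag. So we may intersect with the standard charts of the partial flag variety, i.e. the open cell where $\sigma^{-1}(\mathcal{V}_i)$ is a graph over the standard $N_i$. On such a chart the flag data is an affine space of graph coordinates, and we get a trivialization $\beta:N_\bullet\otimes\oo\cong \mathcal{V}_\bullet$ compatible with flags.

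\textbf{Step 2: write down the representing scheme.} After the trivialization, a point of the chart is given by:
\begin{itemize}
\item the graph coordinates $g$ of the flag;
\item matrices $B_k=\beta^{-1}\Phi_k\beta$, which are flag-preserving;
\item the vector $\beta^{-1}\mathfrak{v}$.
\end{itemize}
These satisfy the closed polynomial conditions
\[\beta^{-1}\bigl(\gamma(e)(\Phi_1,\dots,\Phi_n)\mathfrak{v}\bigr)=\beta^{-1}\sigma(e),\]
and the condition that $\sigma$ is the prescribed map expressed in $g$. Define $W\subset \A^{N}$ as the affine scheme cut out by these equations.

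The universal tuple on $W$ is $(N_\bullet\otimes\oo_W,B_k,v)$. The surjectivity in the definition of a stable tuple holds automatically, because $\sigma$ is an isomorphism and it factors through $\oo\langle x\rangle\to\mathcal{F}_0$. By the previous proposition this tuple gives a morphism $W\to U$.

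\textbf{Step 3: construct the inverse via the functor of points.} Given $S\to U$ with tuple $(\mathcal{F}_\bullet,\varphi,s)$, the map $\sigma_S$ is an isomorphism and provides a canonical trivialization. Expressing the $\varphi_k$ and $s$ in this frame gives canonical coordinates, hence a map $S\to W$. This is independent of the equivalence class of the tuple, because $\sigma_S$ is rigidified by $\gamma$: any isomorphism $\rho$ of tuples intertwines the two $\sigma$'s. The two constructions are mutually inverse.

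The main obstacle is Step 1, reconciling the canonical frame $\sigma$ with the flag structure. The frame $\sigma$ kills the $\operatorname{GL}(N_0)$ ambiguity but not the flag, so $U$ is really a relative partial flag variety over a scheme, not an affine space. I would first try to avoid the issue by showing that $U$ is a closed subscheme of (affine space)$\times\operatorname{Flag}(\underline{d},N_0)$. Rigidifying via $\sigma$ makes the tuple $(\sigma^{-1}\mathcal{V}_\bullet,\sigma^{-1}\Phi\sigma,\sigma^{-1}\mathfrak{v})$ canonical. The conditions that $\Phi$ preserve the flag and that $\sigma$ equal $q_{nc}\circ\gamma$ are closed. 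The flag variety is a scheme, so $U$ is a scheme. This avoids choosing charts altogether.
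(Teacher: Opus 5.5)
Your final paragraph is essentially the paper's proof. The paper also rigidifies via $q_{nc}\circ(\gamma\otimes\id)$, shows that the resulting map $\operatorname{ncNHilb}_{\gamma}^{\underline{d}}(\A^n)\to \operatorname{Flag}_{\underline{d}}(N_0)\times\widetilde{\operatorname{ncHilb}^d}(\A^n)$ is a monomorphism, and cuts out its image by the closed conditions that the operators preserve the transported flag and that the framing equals $q_{nc}\circ(\gamma\otimes\id)$. Your use of all of affine space instead of the open $\widetilde{\operatorname{ncHilb}^d}(\A^n)$ is harmless, since the condition $\sigma=\id$ already forces cyclicity, and the chart-based detour in your Steps 1--2 is not needed.
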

\begin{proof}
    Consider the flag variety $\operatorname{Flag}_{\underline{d}}(N_0)$ which is the moduli of flags on $N_0$ of dimension prescribed by $\underline{d}$. Let $d=d_0+\cdots +d_r$ and consider $\widetilde{\operatorname{ncHilb}^d}(\A^n)$ i.e. the non-nested version. We have a map
    \[\operatorname{ncNHilb}_{\gamma}^{\underline{d}}(\A^n)\to \operatorname{Flag}_{\underline{d}}(N_0)\times \widetilde{\operatorname{ncHilb}^d}(\A^n)\]
    which on the functor of points for $S\in \operatorname{Sch}_{/\C}$ is given by
    \[(\mathcal{F}_\bullet, \phi_1,\dots, \phi_n,s) \mapsto \left((q_{nc}(\gamma\otimes 1))^{-1}\mathcal{F}_\bullet,(\mathcal{F}_0,\phi_1,\dots, \phi_n,s,q_{nc}(\gamma\otimes \id))\right)\]
     i.e. in the second coordinate we forget the flag and use $q_{nc}(\gamma\otimes 1)$ as framing while we in the first coordinate use the flag structure induced on $N_0\otimes_{\C} \mathcal{\oo}_S$ by $\mathcal{F}_\bullet$ via $q_{nc}(\gamma\otimes 1)$. The target of this map is a scheme, so we are done if we can show that the map is a closed immersion. It is clearly a monomorphism on the functor of points. We can identify the image of the map as the locus of 
     \[\left(\mathcal{G}_\bullet,(\mathcal{F}_0,\phi_1,\dots,\phi_n,s,\beta)\right)\]
     where $\phi_1,\dots, \phi_n$ are flag-preserving wrt. the flag structure $\beta(\mathcal{G}_\bullet)$ on $\mathcal{F}_0$ and where $\beta=q_{nc}(\gamma_1\otimes \id)$. These conditions are Zariski closed, thus finishing the proof. 
\end{proof}
\begin{prop}
    The subschemes $\{\operatorname{ncNHilb}_{\gamma}^{\underline{d}}(\A^n)\}_{\gamma \in \operatorname{Hom}_{\C}(N_0,\C\langle x_1,\dots, x_n\rangle)}$ provide a Zariski cover of $\operatorname{ncNHilb}^{\underline{d}}(\A^n)$. In particular, $\operatorname{ncNHilb}^{\underline{d}}(\A^n)$ is a scheme.
\end{prop}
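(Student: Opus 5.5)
We have to show two things: every point of $\operatorname{ncNHilb}^{\underline{d}}(\A^n)$, and in fact every $S$-point locally on $S$, lies in some $\operatorname{ncNHilb}_{\gamma}^{\underline{d}}(\A^n)$; and the resulting covering by schemes makes the algebraic space a scheme. Each $\operatorname{ncNHilb}_{\gamma}^{\underline{d}}(\A^n)$ is Zariski open by definition and is a scheme by the preceding proposition. So it suffices to check that the opens cover all points, i.e. that for every field-valued point the surjection $q_{nc}$ admits a linear section of the required form.

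\textbf{Covering.} Let $x$ be a point with residue field $k$, corresponding to a tuple $(N_\bullet\otimes k, A_1,\dots,A_n,v)$ with $k\langle A_1,\dots,A_n\rangle v=N_0\otimes k$.
\begin{enumerate}
\item Since the words $A_{i_1}\cdots A_{i_m}v$ span the $d$-dimensional space $N_0\otimes k$, we can choose $d$ monomials (words) $w_1,\dots,w_d\in\C\langle x_1,\dots,x_n\rangle$ whose images $w_j(A)v$ form a basis. Monomials are defined over $\C$, so no descent issue arises.
\item Fix the basis $v_1,\dots,v_d$ of $N_0$ and define $\gamma(v_j)=w_j$.
\item The composition $q_{nc}\circ(\gamma\otimes\id)$ at $x$ sends $v_j$ to $w_j(A)v$, hence is an isomorphism on the fibre at $x$.
\item By Nakayama, surjectivity at the fibre implies surjectivity on a neighbourhood, which is exactly the open locus $\operatorname{ncNHilb}_\gamma^{\underline{d}}(\A^n)$. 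So $x\in \operatorname{ncNHilb}_{\gamma}^{\underline{d}}(\A^n)$.
\end{enumerate}
Since the open subspaces contain all points, they form a Zariski cover. Only finitely many word choices are needed in each degree, but that is irrelevant here.

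\textbf{Representability.} $\operatorname{ncNHilb}^{\underline{d}}(\A^n)$ is an algebraic space, being the quotient of a smooth scheme by a free action of $P_{\underline{d}}$. It admits a Zariski open cover by subspaces representable by schemes. An algebraic space with such a cover is a scheme: the schemes glue along their open intersections, which are open subspaces of schemes and hence schemes, and the glued scheme represents the functor by Zariski descent for algebraic spaces.

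\textbf{Main obstacle.} The only subtle point is ensuring the opens cover all scheme-theoretic points and not just $\C$-points. This is handled by working at an arbitrary residue field, using that the word monomials are defined over $\C$. One could alternatively restrict to closed points, since an open subspace containing all closed points of a finite-type algebraic space is everything.
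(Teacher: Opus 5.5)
Your proposal is correct and follows the same route as the paper: at each point the universal surjection $q_{nc}$ admits a $\C$-linear section $\gamma$, so the open loci $\operatorname{ncNHilb}_{\gamma}^{\underline{d}}(\A^n)$ cover, and an algebraic space with a Zariski cover by schemes is a scheme. You make the section explicit by choosing words $w_j$ with $w_j(A)v$ a basis and you work at arbitrary residue fields, which is slightly more careful than the paper; the paper checks only closed points and implicitly uses that the space is locally of finite type over $\C$.
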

\begin{proof}
    This follows simply because the maps $q_{nc}$ admit sections $\gamma$ Zariski locally. Indeed, for each closed point $p\in \operatorname{ncNHilb}^{\underline{d}}(\A^n)$ the map is given by
    \[q_{nc}|_{p}: \C\langle x_1,\dots, x_n \rangle \twoheadrightarrow \mathcal{V}_{0,p_1}\]
    for which such a section clearly exists.
\end{proof}
\subsection{Embedding of the nested Hilbert scheme}
Consider the nested Hilbert scheme of points $\operatorname{NHilb}^{\underline{d}}(\A^n)$. The universal subschemes 
\[\mathcal{Z}_1\subset \mathcal{Z}_2\subset \cdots \subset \mathcal{Z}_{r+1}\subset \A^n_{\C}\times \operatorname{NHilb}^{\underline{d}}(\A^n)\cong \A^n_{\operatorname{NHilb}^{\underline{d}}(\A^n)}\]
give rise to surjections of locally free $\oo=\oo_{\operatorname{NHilb}^{\underline{d}}(\A^n)}$-algebras
\[\oo[x_1,\dots, x_n]\twoheadrightarrow \operatorname{pr}_*\oo_{\mathcal{Z}_r}\twoheadrightarrow\cdots \twoheadrightarrow \operatorname{pr}_*\oo_{\mathcal{Z}_{1}}.\]
We consider the flag $\mathcal{F}_\bullet$ given by setting $\mathcal{F}_0=\operatorname{pr}_*\oo_{\mathcal{Z}_{r+1}}$ and
\[\mathcal{F}_i=\operatorname{ker}(\operatorname{pr}_*\oo_{\mathcal{Z}_r}\twoheadrightarrow\operatorname{pr}_*\oo_{\mathcal{Z}_i}).\]
We consider the map
\[\oo\langle x_1,\dots, x_ n\rangle\twoheadrightarrow \oo[x_1,\dots, x_n]\twoheadrightarrow \mathcal{F}_{\bullet}\]
which induces a non-commutative tuple which in turn classifies a map to the non-commutative nested Hilbert scheme.
\begin{prop}
    The map
    \begin{align*}
    \operatorname{NHilb}^{\underline{d}}(\A^n)\to \operatorname{ncNHilb}^{\underline{d}}(\A^n).
\end{align*}
classified by the tuple above is a closed immersion. Its image agrees with the locus where $\Phi_1,\dots, \Phi_n$ commute which we will call the \textit{the commutativity locus}.
\end{prop}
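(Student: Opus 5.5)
We argue on functors of points: we show that the map of functors $\operatorname{NHilb}^{\underline{d}}(\A^n)\to nc\mathcal{H}ilb^{\underline{d},n}$ is an isomorphism onto the subfunctor of tuples whose operators commute. Since that subfunctor is a closed subscheme, this proves both claims.

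\textbf{The commutativity locus is closed.} On $\operatorname{ncNHilb}^{\underline{d}}(\A^n)$ consider the sections
\[[\Phi_i,\Phi_j]\in H^0(\operatorname{End}^{\operatorname{fil}}(\mathcal{V}_\bullet)).\]
Their common zero scheme $Z$ is closed. A map $S\to \operatorname{ncNHilb}^{\underline{d}}(\A^n)$ factors through $Z$ exactly when the pulled-back operators $\phi_i$ on $\mathcal{F}_0$ pairwise commute.

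\textbf{The map lands in $Z$.} For the tuple built from the nesting, each $\phi_i$ is multiplication by $x_i$ on $\operatorname{pr}_*\oo_{\mathcal{Z}_{r+1}}$. These operators commute and preserve the kernels $\mathcal{F}_j$, since the $\mathcal{F}_j$ are ideals in the algebra $\mathcal{F}_0$.

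\textbf{The inverse $Z\to\operatorname{NHilb}^{\underline{d}}(\A^n)$.} Take a tuple $(\mathcal{F}_\bullet,\phi_1,\dots,\phi_n,s)$ on $S$ with the $\phi_i$ commuting.
\begin{enumerate}
\item The surjection $\oo_S\langle x\rangle\to\mathcal{F}_0$ factors through $\oo_S[x_1,\dots,x_n]$. Its kernel $\mathcal{I}$ is an ideal, so $\mathcal{F}_0\cong\oo_S[x]/\mathcal{I}$ is a locally free quotient algebra of rank $d$. This gives a point of $\operatorname{Hilb}^d(\A^n)(S)$, corresponding to $\mathcal{Z}_{r+1}$.
\item Each $\mathcal{F}_j$ is a sub-$\oo_S[x]$-module of $\mathcal{F}_0$, because it is $\phi$-stable. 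So its preimage $\mathcal{I}_j\supset\mathcal{I}$ is an ideal of $\oo_S[x]$.
\item The quotient $\oo_S[x]/\mathcal{I}_j=\mathcal{F}_0/\mathcal{F}_j$ is locally free of the correct rank, because the flag has locally free graded pieces. This gives the flat nested family $\mathcal{Z}_j$.
\end{enumerate}
This construction is compatible with base change. It is inverse to the original map because the equivalence class of a tuple is determined by the kernel ideals: an isomorphism $\rho$ of tuples is forced by $\rho s=s'$ together with cyclicity. In the other direction, $\mathcal{F}_0\cong\oo_S[x]/\mathcal{I}$ recovers the ideals exactly. Hence the map is an isomorphism onto $Z$, and in particular a closed immersion.

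\textbf{Index bookkeeping.} I expect the main obstacle to be the indexing. We must check that the convention $\mathcal{F}_i=\ker(\mathcal{F}_0\to\operatorname{pr}_*\oo_{\mathcal{Z}_i})$ gives ranks $\operatorname{rk}\mathcal{F}_i/\mathcal{F}_{i+1}=d_i$, matching the dimension vector. We must also check that the flag conditions translate exactly into the length conditions of the nesting. This is routine once the indices are aligned, and flatness follows from local freeness of the successive quotients.
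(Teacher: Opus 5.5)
Your proposal is correct and follows essentially the same route as the paper: the commutativity locus is the zero scheme of the commutator sections, and the inverse is built on functors of points by factoring the cyclic surjection through $\oo_S[x_1,\dots,x_n]$ and taking preimages of the flag as nested ideals with locally free quotients. Your version is somewhat more explicit than the paper's (checking that equivalence classes are determined by the kernel ideals, and using the corrected indexing $\mathcal{F}_i=\ker(\mathcal{F}_0\to\operatorname{pr}_*\oo_{\mathcal{Z}_i})$), but the argument is the same.
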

\begin{proof}
    We construct an inverse from the commutativity locus via the functor of points. The commutativity locus simply classifies tuples where the associated quotient map $q_{nc}$ factors through $\oo_S[x_1,\dots, x_n]$. In this case an $S$-valued point is therefore completely classified up to isomorphism by a flag $\mathcal{F}_\bullet$ of locally free sheaves with an epimorphism
    \[q:\oo_S[x_1,\dots, x_n]\twoheadrightarrow\mathcal{F}_0\]
    such that $q^{-1}(\mathcal{F}_i)$ is closed under left muliplication by $x_1,\dots, x_n$, i.e. is a left and therefore also a two-sided ideal. This in turn induces a series of epimorphisms
    \[\oo_S[x_1,\dots, x_n]\twoheadrightarrow\mathcal{F}_0\twoheadrightarrow\mathcal{F}_0/\mathcal{F}_{r-1}\twoheadrightarrow\cdots \twoheadrightarrow\mathcal{F}_0/\mathcal{F}_1\]
    which allows us to identify $\mathcal{F}_{0}/\mathcal{F}_i$ with a locally free $\oo_S$-algebra $\oo_{Z_i}$ giving us the desired $S$-valued point
    \[Z_1\subset \cdots Z_r\subset \A^n_{\C}\times S\]
    on $\operatorname{NHilb}^{\underline{d}}(\A^n)$. One can easily check that these constructions are the inverse of each other. Clearly the commutativity locus is Zariski closed as it can be given by the zero locus of the commutators
    \[\bigoplus_{1\leq i<j\leq n} [\Phi_i,\Phi_j]\in H^0\left(\textstyle{\bigoplus_{i,j}}\mathcal{E}nd^{\operatorname{fil}}(\mathcal{V}_\bullet)\right).\]
\end{proof}
\begin{remark}
Recall that for a quasi-coherent sheaf $\mathcal{F}\in \operatorname{QCoh}(\A^n)$ we can construct tautological sheaves 
\[\mathcal{F}^{[d_0+\cdots+d_r]}\twoheadrightarrow\mathcal{F}^{[d_0+\cdots +d_{r-1}]}\twoheadrightarrow \cdots \twoheadrightarrow\mathcal{F}^{[d_0]}\]
on $\operatorname{NHilb}^{\underline{d}}(\A^n)$ by setting
\[\mathcal{F}^{[d_0+\cdots+d_{i-1}]}=p_{i*}q_i^*(\mathcal{F})\]
where 
\[\A^n_{\C^n}\xleftarrow{q_i}\mathcal{Z}_i\xrightarrow{p_i}\operatorname{NHilb}^{\underline{d}}(\A^n)\]
are the projection maps of the universal family. We note that by construction we have 

\[\mathcal{V}_0|_{\operatorname{NHilb}^{\underline{d}}}= \mathcal{O}_{\A^n}^{[d_0+\cdots d_r]}\]
and for each $i\geq 1$ we have a short exact sequence
\[0\to \mathcal{V}_i|_{\operatorname{NHilb}^{\underline{d}}(\A^n)}\to \mathcal{O}_{\A^n}^{[d_0+\cdots d_r]}\to \mathcal{O}_{\A^n}^{[d_0+\cdots d_{i-1}]}\to 0\]
where $\mathcal{V}_\bullet$ is the universal flag on $\operatorname{ncNHilb}^{\underline{d}}(\A^n)$.
\end{remark}
\subsection{The torus action}
We let $T=\Gm^n$ which we let act on $\A^n$ by
\[(t_1,\dots, t_i).(p_1,\dots, p_i)=(t_1^{-1}p_1,\dots, t_n^{-1}p_n). \]
This equips $\C[x_1,\dots, x_n]$ with a natural $T$ space structure where $t.x_i=t_i^{}x_i$ and $t.1=1$. With this in mind, we let $T$ act on $\widetilde{\operatorname{ncNHilb}^{\underline{d}}}(\A^n)$ by 
\[t.(A_1,\dots, A_n,v)=(t_1^{-1}A_1,\dots, t_n^{-1}A_n,v).\]
This commutes with the $P_{\underline{d}}$-action and therefore descends to an action on the quotient. Similarly, equipping the $P_{\underline{d}}$-equivariant sheaf $V_\bullet$ with the trivial $T$-action gives us an action on the corresponding universal flag $\mathcal{V}_\bullet$. In this way, the universal operators are equivariant maps 
\[\Phi_i: t_i\mathcal{V}_\bullet \to \mathcal{V}_\bullet\]
where the twist $t_i$ is by tensoring with the one-dimensional representation of character $t_i$. We can therefore view it as a section of 
\[t_i^{-1}\mathcal{E}nd^{\operatorname{fil}}(\mathcal
V_\bullet)\]
The universal cyclic vector gives an equivariant map
\[\mathfrak{v}:\oo\to \mathcal{V}_{\bullet}\]
where $\oo$ is given the trivial $T$-equivariant structure. 
We equip $\operatorname{NHilb}^{\underline{d}}(\A^n)$ with the $T$-action induced from the action on $\A^n$. With these actions the embedding
\[\operatorname{NHilb}^{\underline{d}}(\A^n) \hookrightarrow \operatorname{ncNHilb}^{\underline{d}}(\A^n)\]
and the maps between equivariant sheaves 
\[ \mathcal{V}_i|_{\operatorname{NHilb}^{\underline{d}}(\A^n)}\hookrightarrow \mathcal{O}_{\A^n}^{[d_0+\cdots d_r]}\]
are also equivariant.
\section{The commutativity bundle and the perfect obstruction theory}
\subsection{Reduction of commutativity equations}
Naively the set of commutativity equations can be structured into a section of the equivariant bundle 
\[\bigoplus_{1\leq i<j\leq n}t_{i}^{-1}t_j^{-1}\mathcal{E}nd^{\operatorname{fil}}(\mathcal{V}_\bullet).\]
Letting 
\[h(\underline{d})=\sum_{0\leq i\leq j \leq r} d_id_j=\operatorname{rk}\mathcal{E}nd^{\operatorname{fil}}(\mathcal{V}_\bullet)\]
and
\[d=\sum_{i=0}^rd_i\]
we see that
\[\operatorname{dim}\operatorname{ncNHilb}^{\underline{d}}(\A^n)=n\cdot h(\underline{d})+d-h(\underline{d}) \]
giving us a virtual dimension of 
\[d-\frac{(n-1)(n-2)}{2}h(\underline{d}).\]
For $n=2$ this gives us a virtual dimension of $d$. For $n\geq 3$ the virtual dimension becomes negative since 
\[\frac{d(d+1)}{d}\leq h(\underline{d})\leq d^2.\]
In this section we show that the rank of this bundle can be reduced in the $n=2$ case giving us a virtual dimension of $d+d_0$. We note that in the non-nested case $r=0$ the Hilbert scheme $\operatorname{Hilb}^{d}(\A^2)$ is smooth of dimension $2d$. In particular, the above virtual class of virtual dimension $d$ is not the fundamental one. This means that we are able to locally reduce the number of commutativity equations by $d$. We will show that the cyclicity condition allows us to do this globally. We do so by showing that if the commutator is in the span of the cyclic vector then it is forced to vanish. 

\begin{lemma}
    Let $V$ be a vector space with operators $A_1,A_2\in \operatorname{End}(V)$ and $v\in V$ cyclic. If $\im ([A_1,A_2])\subset \operatorname{span}\{v\}$ then $[A_1,A_2]=0$.
\end{lemma}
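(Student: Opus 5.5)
Write $C=[A_1,A_2]$. Since $\operatorname{im}C\subset\operatorname{span}\{v\}$, there is a linear functional $\lambda\in V^\vee$ with $C(w)=\lambda(w)v$ for all $w\in V$. I will show $\lambda=0$. The approach is a trace argument combined with cyclicity. The identity $\operatorname{tr}(C\,M)=0$ holds for every $M$ in the unital algebra $\C\langle A_1,A_2\rangle$, and pairing this with the rank-one form of $C$ shows that $\lambda$ vanishes on $\C\langle A_1,A_2\rangle v=V$.

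\textbf{Step 1.} The first step is to show that $\operatorname{tr}(C\,M)=0$ for every word $M$ in $A_1,A_2$ (including $M=\operatorname{id}$). This is the only real content, so I will attempt it first by a Jacobson-type induction on the length of $M$. For a word $M$ we have
\[\operatorname{tr}([A_1,A_2]M)=\operatorname{tr}(A_1A_2M)-\operatorname{tr}(A_2A_1M)=\operatorname{tr}(A_2[M,A_1]),\]
using cyclicity of the trace. A direct cancellation is not obvious this way, so I will instead exploit the rank-one structure.

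Take a word $M$ and set $u=Mv$. Then
\[\operatorname{tr}(C\,M')=\lambda(M'v)\]
for any $M'$, since $C M'$ has rank $\le 1$ with image in $\operatorname{span}\{v\}$ and trace $\lambda(M'v)$. The plan is to prove $\lambda(Mv)=0$ by induction on the length of $M$, using the relation
\[\operatorname{tr}(CM)=\operatorname{tr}(A_1[A_2,M])+\operatorname{tr}\big([A_1,M]\ldots\big),\]
or more cleanly the standard trick: $C$ commutes with nothing in particular, but $CA_iC=\lambda(A_iv)\,C$.

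\textbf{Step 2 (cleaner route).} I will consider the subspace
\[W=\{w\in V:\lambda(Pw)=0\ \text{for all } P\in\C\langle A_1,A_2\rangle\}.\]
$W$ is $A_1,A_2$-invariant and $C$ vanishes on $W$. So it suffices to show $v\in W$, which by cyclicity gives $W=V$ and hence $C=0$.

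To show $v\in W$, I will use the trace of $C$ multiplied by words:
\[\lambda(Pv)=\operatorname{tr}(CP).\]
Expanding $CP=A_1A_2P-A_2A_1P$ and using $\operatorname{tr}(XY)=\operatorname{tr}(YX)$ turns this into $\operatorname{tr}(A_2[P,A_1])$. Iterating commutators reduces to terms of the form $\operatorname{tr}(C\,P')$ with $P'$ of the same total length but rearranged. The induction is on the number of letters, with the sum over all words of fixed multidegree.

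\textbf{Main obstacle.} The hard step is this combinatorial trace induction. If it resists, the fallback is the known argument from the Hilbert scheme literature (Nakajima's Proposition 2.8 for ADHM data with $j=0$): there $[A_1,A_2]+ij=0$ with $i$ cyclic forces $j=0$. Here $C=v\lambda$ plays the role of $-ij$ with $i=v$ and $j=-\lambda$. The argument that $j=0$ first shows $A_1,A_2$ preserve $\ker$-data, then uses that $\operatorname{tr}((A_1+tA_2)^k\, v\lambda)=0$ by upper-triangularizability of $A_1,A_2$ in a common flag. That triangularizability holds because their commutator is nilpotent of rank $\le1$, and one proves $\operatorname{tr}(C\,P)=0$ via a simultaneous triangular basis. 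I would adapt that proof directly: show by induction on the dimension that $A_1,A_2$ preserve a common flag on which $C$ is strictly upper triangular. Then $\operatorname{tr}(CP)=0$ for all $P$, so $\lambda(Pv)=0$ for all $P$, and $\lambda=0$ by cyclicity.
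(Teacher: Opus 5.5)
Your reduction is right and matches the paper's. $CP$ is the rank-one map $w\mapsto\lambda(Pw)v$, so $\lambda(Pv)=\operatorname{tr}(CP)$, and by cyclicity it suffices to show $\operatorname{tr}(CP)=0$ for every word $P$. But that is the whole content of the lemma, and your proposal never proves it. The subspace $W$ of Step~2 adds nothing, since by cyclicity $v\in W$ is literally equivalent to $\lambda=0$. The phrase ``iterating commutators reduces to rearranged words'' stops exactly where the work begins. You even wrote down the paper's key identity $\operatorname{tr}(CM)=\operatorname{tr}(A_2[M,A_1])$ and then abandoned it.

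The missing idea is to induct on word length so that rearrangements become harmless. Suppose $\lambda(Mv)=0$ for all words of length $\le m$; the case $m=0$ is $\lambda(v)=\operatorname{tr}C=0$. For $L$ of length $m+1$ one has $FA_1A_2Gv-FA_2A_1Gv=FCGv=\lambda(Gv)Fv=0$. So all words of the same multidegree agree on $v$, and one may take $L=A_2^kA_1^l$. Since $[L,A_1]=[A_2^k,A_1]A_1^l=-\sum_{i=1}^kA_2^{i-1}CA_2^{k-i}A_1^l$, your identity gives
\[\lambda(Lv)=\operatorname{tr}(A_2[L,A_1])=-\sum_{i=1}^k\lambda(A_2^{k-i}A_1^lA_2^iv)=-k\,\lambda(Lv).\]
Hence $(k+1)\lambda(Lv)=0$, so $\lambda(Lv)=0$ in characteristic zero. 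Without the inductive identification of the rearranged words with $Lv$, the trace identity only relates different words to each other and yields no cancellation.

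Your fallback is a genuinely different and valid strategy, but it has a hole of the same kind. If $A_1,A_2$ preserve a common full flag, then $C$ (a commutator of upper triangular matrices) is strictly upper triangular, hence so is $CP$ for every word $P$, and $\operatorname{tr}(CP)=0$ at once. This uses neither word combinatorics nor cyclicity; cyclicity enters only at the last step. However, ``triangularizability holds because the commutator is nilpotent of rank $\le 1$'' is just the statement of Laffey's theorem, not a proof. Your induction on $\dim V$ needs, at each step, a common invariant subspace $0\neq W\neq V$, and you do not produce one.

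One way to get it:
\begin{itemize}
\item If $A_1$ is scalar, take an eigenline of $A_2$.
\item Otherwise put $A=A_1-\mu$ for an eigenvalue $\mu$, so $A\neq 0$ is singular and $[A,A_2]=C$.
\item If $C$ kills $\ker A$, then $AA_2x=A_2Ax+Cx=0$ for $x\in\ker A$, so $\ker A$ is $A_2$-stable.
\item Otherwise $0\neq Cx=AA_2x$ for some $x\in\ker A$. Then $\operatorname{im}C\subseteq\operatorname{im}A$ because $C$ has rank one, and $A_2Ay=AA_2y-Cy$ shows $\operatorname{im}A$ is $A_2$-stable.
\end{itemize}
Restrictions and induced quotient maps again have commutator of rank $\le 1$, so the induction closes.

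With this lemma your fallback is a complete proof. Compared with the paper's, it trades a self-contained trace induction (which uses $k+1\neq 0$) for a structural lemma that needs an algebraically closed field. In return it proves the stronger fact that $\operatorname{tr}(CP)=0$ for all $P$ whenever $[A_1,A_2]$ has rank $\le 1$, with no cyclicity assumption.
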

\begin{proof}
Set $B=[A_1,A_2]$. We can find a functional $\phi:V\to \C$ such that $Bw=\phi(w)v$. Let 
\[V_m=\operatorname{span}\{A_{i_1}\cdots A_{i_k}v \: | \: k\leq m,\: i_j\in \{1,2\}\}.\]
We have $V=\bigcup_{m\geq 0}V_m$ and show by induction that $\phi|_{V_m}\equiv 0$. We note that for any operator $L:V\to V$ it holds that 
\[\operatorname{tr}(BL)=\phi(Lv)\]
since we have $BL(w)=\phi(Lw)v$. In particular, 
\[\phi(v)=\operatorname{Tr}[A_1,A_2]=0\]
proving that $\phi|_{V_0}\equiv0$. Suppose now that $\phi|_{V_m}\equiv 0$ for $m\geq 0$. Let $L=A_{i_1}\cdots A_{i_{m+1}}$ so that $Lv\in V_{m+1}$. We first prove that $Lv=A_2^kA_1^lv$ for some $l,k$ with $l+k=m+1$. Indeed, if $L$ contains an inversion we can write it in the form $L=FA_1A_2G$ where $F,G$ are non-commutative words in $A_1,A_2$ of length $\leq m$. Then 
\[FA_1A_2Gv=F(A_2A_1+B)Gv=FA_2A_1Gv+FBGv=FA_2A_1Gv-\phi(Gv)Fv\]
but $\phi(Gv)=0$ by the induction hypothesis, so we have decreased the number of inversions by 1. Inductively continuing this procedure allows us to write $Lv=A_2^kA_1^l$. If $k=0$ then
\[\phi(Lv)=\operatorname{tr}([A_1,A_2]A_1^{l})=\operatorname{tr}(A_1A_2A_1^l)-\operatorname{tr}(A_2A_1A_1^l)\]
which disappears by invariance of the trace under cyclic shifts. For $k>0$ we first see that
\[\operatorname{tr}([L,A_1]A_2)=\operatorname{tr}(LA_1A_2)-\operatorname{tr}(A_1LA_2)=\operatorname{tr}(BL)=\phi(Lv).\]
We apply the Leibniz rule $[XY,Z]=X[Y,Z]+[X,Z]Y$ to see
\begin{align*}
    [L,A_1]&=[A_2^kA_1^l,A_1]\\
    &= A_2^k[A_1^l,A_1]+[A_2^k,A_1]A_1^l\\
    &=[A_2^k,A_1]A_1^l\\
    &=\sum_{i=1}^k A_2^{i-1}[A_2,A_1]A_2^{k-i}A_1^l\\
    &=-\sum_{i=1}^k A_2^{i-1}BA_2^{k-i}A_1^l.
\end{align*}
It follows that
\begin{align*}\phi(Lv)&=\operatorname{tr}([L,A_1]A_2)\\
&=-\sum_{i=1}^k \operatorname{tr}(A_2^{i-1}BA_2^{k-i}A_1^lA_2)\\
&=-\sum_{i=1}^k \operatorname{tr}(BA_2^{k-i}A_1^lA_2^i)\\
&=-\sum_{i=1}^k \phi(A_2^{k-i}A_1^lA_2^iv)\end{align*}
but by inverting inversions we have $A_2^{k-i}A_1^lA_2^iv=A_2^kA_1^lv=Lv$ proving
\[\phi(Lv)=-k\phi(Lv)\]
so that $\phi(Lv)=0$ as desired.
\end{proof}
This means that the commutativity locus of $\operatorname{ncNHilb}^{\underline{d}}(\A^2)$ is cut out by the commutator section on the quotient sheaf
\[\mathcal{H}om^{\operatorname{fil}}(\mathcal{V}_\bullet,\overline{\mathcal{V}}_{\bullet})\]
where $\overline{\mathcal{V}}_0=\operatorname{coker}(\mathfrak{v})$ for $\mathfrak{v}:\oo\to \mathcal{V}_0$ the universal cyclic vector. We quickly prove that this is in fact locally free; the main point is simply proving that $\overline{\mathcal{V}}_\bullet$ is a flag of bundles.
\begin{lemma}
    Consider the universal cyclic vector $\mathfrak{v}:\oo \to \mathcal{V}_0$. The cokernel $\overline{\mathcal{V}}_0$ is locally free and the subsheaves 
    \[\overline{\mathcal{V}}_i:=(\mathcal{V}_i+\operatorname{im}\mathfrak{v})/\operatorname{im}\mathfrak{v} \subset \overline{\mathcal{V}}_0\]
    are inclusions of locally free sheaves with locally free quotients.
\end{lemma}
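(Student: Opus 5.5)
The key point is that the universal cyclic vector $\mathfrak{v}:\oo\to\mathcal{V}_0$ is nowhere vanishing; once that is known, everything else is fibrewise linear algebra plus constancy of ranks. Recall that on a reduced base a map of locally free sheaves whose fibre rank is constant has locally free cokernel. Since $\operatorname{ncNHilb}^{\underline{d}}(\A^2)$ is smooth, hence reduced, it is enough to check that the relevant fibre ranks are constant. To stay fully general, I would instead argue via local splittings, which avoids any reducedness hypothesis.

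\emph{Step 1: $\mathfrak{v}$ is a subbundle.} At every closed point $p$ the vector $\mathfrak{v}(p)\in\mathcal{V}_{0,p}$ is cyclic. If $d>0$ this forces $\mathfrak{v}(p)\neq 0$, since otherwise $\C\langle A_1,A_2\rangle\cdot 0=0\neq N_0$. (The case $d=0$ is trivial.) So $\mathfrak{v}$ is a fibrewise injective map from a line bundle. Locally, some coordinate of $\mathfrak{v}$ with respect to a trivialization of $\mathcal{V}_0$ is a unit, so $\mathfrak{v}$ splits locally. Hence $\overline{\mathcal{V}}_0$ is locally free of rank $d-1$.

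\emph{Step 2: the filtration.} The key observation is that $\mathfrak{v}(p)\notin \mathcal{V}_{1,p}$ whenever $d_0>0$. Indeed, $\mathcal{V}_1$ is stable under both operators, so if $v\in N_1$ then $\C\langle A_1,A_2\rangle v\subset N_1\subsetneq N_0$, contradicting cyclicity. More precisely, the image of $v$ in $N_0/N_1$ generates $N_0/N_1$ under the induced operators, so it is nonzero. It follows that the composite $\oo\to\mathcal{V}_0\to\mathcal{V}_0/\mathcal{V}_i$ is fibrewise nonzero for every $i\geq 1$, hence is a local split injection by Step 1's argument.

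Now I would treat two cases. If $d_0>0$, the map $\mathcal{V}_i\oplus\oo\to\mathcal{V}_0$ is fibrewise injective, since its kernel would force $\mathfrak{v}(p)\in\mathcal{V}_{i,p}$. Therefore $\mathcal{V}_i+\im\mathfrak{v}$ is a subbundle of rank $\operatorname{rk}\mathcal{V}_i+1$ with locally free quotient $\operatorname{coker}(\oo\to\mathcal{V}_0/\mathcal{V}_i)$. Then $\overline{\mathcal{V}}_i\cong\mathcal{V}_i$ is locally free, and $\overline{\mathcal{V}}_0/\overline{\mathcal{V}}_i\cong \mathcal{V}_0/(\mathcal{V}_i+\im\mathfrak{v})$ is locally free. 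The successive quotients $\overline{\mathcal{V}}_i/\overline{\mathcal{V}}_{i+1}\cong\mathcal{V}_i/\mathcal{V}_{i+1}$ are locally free for $i\geq1$. If $d_0=0$, I apply the same argument with $i_0$ the first index with $d_{i_0}>0$, since $\mathcal{V}_0=\mathcal{V}_{i_0}$. Then $\overline{\mathcal{V}}_i=\overline{\mathcal{V}}_0$ for $i\leq i_0$.

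Everything hinges on the non-vanishing of $\mathfrak{v}$ modulo $\mathcal{V}_i$, which I expect to be the main point. That non-vanishing comes from the flag-preserving property together with cyclicity. The remaining task is the bookkeeping showing that fibrewise injective maps of bundles have locally free cokernels, which is done via a local splitting using a unit minor.
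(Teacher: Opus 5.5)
Your proof is correct and follows the paper's argument. Cyclicity makes $\mathfrak{v}$ a rank-one subbundle, and cyclicity together with flag-stability of the operators keeps $\mathfrak{v}$ out of every proper piece $\mathcal{V}_i\subsetneq\mathcal{V}_0$, so $\mathcal{V}_i+\operatorname{im}\mathfrak{v}\cong\mathcal{V}_i\oplus\operatorname{im}\mathfrak{v}$ and $\overline{\mathcal{V}}_i\cong\mathcal{V}_i$. Your version is slightly more careful than the paper's: you check $\mathfrak{v}(p)\notin\mathcal{V}_{i,p}$ fibrewise rather than only $\operatorname{im}\mathfrak{v}\cap\mathcal{V}_i=0$ as subsheaves, and the fibrewise statement is what is needed for the quotients $\overline{\mathcal{V}}_0/\overline{\mathcal{V}}_i$ to be locally free.
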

\begin{proof}
 Let $\mathcal{L}:=\operatorname{im}\mathfrak{v}\subset \mathcal{V}_0$. By the cyclic condition the map $\mathfrak{v}$ has local constant rank $1$ so that $\mathcal{L}$ is a subbundle and so 
 \[\overline{\mathcal{V}}_0=\mathcal{V}_0/\mathcal{L}\]
 is locally free. If $\mathcal{V}_i=\mathcal{V}_0$ then $\overline{\mathcal{V}}_i=\overline{\mathcal{V}}_0$ is locally free. If $\mathcal{V}_i\subsetneqq \mathcal{V}_0$ then $\mathcal{L}\cap \mathcal{V}_i=0$ since if $\mathfrak{v}$ factors through $\mathcal{V}_i$ then $\mathcal{V}_i$ would be closed under the universal operators and contain the universal cyclic vector, hence by cyclicity $\mathcal{V}_i=\mathcal{V}_0$. It follows that 
 \[\mathcal{V}_i+\mathcal{L}\cong \mathcal{V}_i\oplus \mathcal{L}\]
 is locally free and 
 \[\overline{\mathcal{V}}_i\cong \mathcal{V}_i\]
 is also locally free.
\end{proof}

\subsection{The perfect obstruction theory}The above allows us to construct a virtual class on the nested Hilbert scheme of points on $\C^2$ with dimension matching the fundamental class in the smooth non-nested case. We note that equivariantly the commutativity section lives in 
\[\mathcal{H}om^{\operatorname{fil}}(\mathcal{V}_\bullet,t^{-1}_1t^{-1}_2\overline{\mathcal{V}}_{\bullet})=t^{-1}_1t^{-1}_2\mathcal{H}om^{\operatorname{fil}}(\mathcal{V}_\bullet,\overline{\mathcal{V}}_{\bullet})\]
where $t^{-1}_1t^{-1}_2\cdot $ means twisting by the corresponding character. Note that if $d_0\neq 0$ the rank of this bundle is given by 
\[h(\underline{d})-d_0\]
so that
\[\operatorname{dim}\operatorname{ncNHilb}^{\underline{d}}(\A^n)-\operatorname{rk}=h(\underline{d})+d-(h(\underline{d})-d_0)=d+d_0.\]
\begin{definition} \label{POT DEF} 
    Let $\underline{d}$ be a dimension vector. Define 
    \[\mathcal{E}=t^{-1}_1t^{-1}_2\mathcal{H}om^{\operatorname{fil}}(\mathcal{V}_\bullet,\overline{\mathcal{V}}_{\bullet})\]
    and
    \[\mathcal{E}_{\operatorname{naive}}=t^{-1}_1t^{-1}_2\mathcal{E}nd^{\operatorname{fil}}(\mathcal{V}_\bullet)\]
    as equivariant locally free sheaves on $\operatorname{ncNHilb}^{\underline{d}}(\A^2)$ and let $s_{\operatorname{com}}=[\Phi_1,\Phi_2]$ be the commutator section. Since the nested Hilbert scheme is its zero locus $Z(s_{\operatorname{com}})=\operatorname{NHilb}^{\underline{d}}(\A^2)$ (viewed either as a section of $\mathcal{E}$ and $\mathcal{E}'$) this defines two perfect obstruction theories of $\operatorname{NHilb}^{\underline{d}}(\A^2)$ of virtual dimensions $d$ and $d+d_i$ respectively, where $i$ is chosen to be the smallest such that $d_i\neq 0$. Concretely, if we let $I\subset \oo_{\operatorname{ncNHilb}^{\underline{d}}(\A^2)}$ they are given by restricting the map of complexes
    \begin{center}
        \begin{tikzcd}
            \big{[}\mathcal{E}_{\ast}^{\vee}\arrow{d}{s_{\operatorname{com}}^{\vee}} \arrow{r}{d\circ s_{\operatorname{com}}^{\vee}} & \Omega_{\operatorname{ncNHilb}^{\underline{d}}(\A^2)} \big{]}\arrow[equal]{d} \\ 
            \big{[}I/I^2\arrow{r}{d} &\Omega_{\operatorname{ncNHilb}^{\underline{d}}(\A^2)} \big{]}
        \end{tikzcd}
    \end{center}
    from $\operatorname{ncNHilb}^{\underline{d}}(\A^2)$ to $\operatorname{NHilb}^{\underline{d}}(\A^2)$ where $\ast\in \{\varnothing, \operatorname{naive}\}$. We let 
    \[\mathbb{E}\to \mathbb{L}_{\operatorname{NHilb}^{\underline{d}}(\A^2)}\]
    be the perfect obstruction theory defined by the bundle 
    \[\mathcal{E}_{\underline{d}}=\left\{\begin{array}{ll}
        \mathcal{E} & \text{if }d_0\neq0 \\
        \mathcal{E}_{\operatorname{naive}} & \text{if }d_0=0 
    \end{array}\right.\]
    so that the virtual dimension is $d+d_0$.
\end{definition}
As a first step, we identify $ds_{\operatorname{com}}$ with an explicit map of tautological bundles.
\begin{lemma} \label{differential identification lemma}
    We have a short exact sequence 
\[0\to  \mathcal{E}nd^{\operatorname{fil}}(\mathcal{V}_\bullet) \to t^{-1}_1\mathcal{E}nd^{\operatorname{fil}}(\mathcal{V}_\bullet)\oplus t^{-1}_2\mathcal{E}nd^{\operatorname{fil}}(\mathcal{V}_\bullet)\oplus \mathcal{V}_0\to T_{\operatorname{ncNHilb}^{\underline{d}}(\A^2)}\to 0 \]
of $T$-equivariant sheaves on $\operatorname{ncNHilb}^{\underline{d}}(\A^2)$. Using this, the map
    \[\text{d}s_{\operatorname{com}}:T_{\operatorname{ncNHilb}^{\underline{d}}(\A^2)}|_{\operatorname{NHilb}^{\underline{d}}(\A^2)}\to \mathcal{E}_{\operatorname{naive}}|_{\operatorname{NHilb}^{\underline{d}}(\A^2)}\]
    identifies with 
    \begin{center}
        \begin{tikzcd}
            (t^{-1}_1\mathcal{E}nd^{\operatorname{fil}}(\mathcal{V}_\bullet)\oplus t^{-1}_2\mathcal{E}nd^{\operatorname{fil}}(\mathcal{V}_\bullet)\oplus \mathcal{V}_0)\big{/}\mathcal{E}nd^{\operatorname{fil}}(\mathcal{V}_\bullet) \arrow{r} & t^{-1}_1t^{-1}_2\mathcal{E}nd^{\operatorname{fil}}(\mathcal{V}_\bullet)\\[-20pt]
            (\phi_1,\phi_2,v)\arrow[mapsto]{r} &\left[\phi_1,\Phi_2\right]+\left[\Phi_1,\phi_2\right]
        \end{tikzcd}
    \end{center}
    after restricting the latter to $\operatorname{NHilb}^{\underline{d}}(\A^2)$. The similar map with target $\mathcal{E}|_{\operatorname{NHilb}^{\underline{d}}(\A^2)}$ is obtained by simply composing the above with the quotient $\mathcal{E}_{\operatorname{naive}}\to \mathcal{E}$.
\end{lemma}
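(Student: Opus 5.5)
The tangent sequence comes from the quotient presentation $\operatorname{ncNHilb}^{\underline{d}}(\A^2)=[\widetilde{\operatorname{ncNHilb}^{\underline{d}}}(\A^2)/P_{\underline{d}}]$ with a free action. The identification of $\text{d}s_{\operatorname{com}}$ is then a first-order computation of the commutator on the open subset $\widetilde{\operatorname{ncNHilb}^{\underline{d}}}(\A^2)$ of an affine space, followed by descent.

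\emph{The short exact sequence.} The space $\widetilde{\operatorname{ncNHilb}^{\underline{d}}}(\A^2)$ is open in the vector space $\operatorname{End}^{\operatorname{fil}}_{\C}(N_\bullet)^{\oplus 2}\oplus N_0$, so its tangent bundle is the trivial bundle with this fiber. In the $P_{\underline{d}}$-equivariant description this is $V_\bullet$-valued, so $T_{\widetilde{\operatorname{ncNHilb}}}=\mathcal{E}nd^{\operatorname{fil}}(V_\bullet)^{\oplus 2}\oplus V_0$. With the $T$-action $t.(A_1,A_2,v)=(t_1^{-1}A_1,t_2^{-1}A_2,v)$, the summands carry the twists $t_1^{-1},t_2^{-1}$ and the trivial twist respectively.

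Differentiating the action $g\mapsto (gA_1g^{-1},gA_2g^{-1},gv)$ at the identity gives the infinitesimal action map
\[\mathfrak{p}_{\underline{d}}\otimes\oo=\mathcal{E}nd^{\operatorname{fil}}(V_\bullet)\to T_{\widetilde{\operatorname{ncNHilb}}},\qquad \xi\mapsto([\xi,A_1],[\xi,A_2],\xi v).\]
This map is injective at every point. If $([\xi,A_1],[\xi,A_2],\xi v)=0$, then $\ker\xi$ contains $v$ and is invariant under $A_1,A_2$, so cyclicity forces $\xi=0$, exactly as in the proof of freeness. Since the action is free, the quotient map is a $P_{\underline{d}}$-torsor, and we get the exact sequence $0\to\mathfrak{p}\otimes\oo\to T_{\widetilde{\operatorname{ncNHilb}}}\to \pi^*T_{\operatorname{ncNHilb}}\to0$. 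All maps are $P_{\underline{d}}$- and $T$-equivariant; $\mathfrak{p}$ carries the adjoint action and the trivial $T$-action, which matches the untwisted $\mathcal{E}nd^{\operatorname{fil}}(\mathcal{V}_\bullet)$. Descending along the equivalence $\operatorname{QCoh}^{P_{\underline{d}}}(\widetilde{\operatorname{ncNHilb}})\cong\operatorname{QCoh}(\operatorname{ncNHilb})$ used in the functor-of-points proposition gives the stated sequence.

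\emph{The differential.} Pull $s_{\operatorname{com}}$ back to $\widetilde{\operatorname{ncNHilb}}$, where it is the polynomial map $(A_1,A_2,v)\mapsto[A_1,A_2]$. Its derivative in the direction $(\phi_1,\phi_2,w)$ is $[\phi_1,A_2]+[A_1,\phi_2]$, with no dependence on $w$. The equivariant weights match: $t_1^{-1}\cdot t_2^{-1}$ gives the target $t_1^{-1}t_2^{-1}\mathcal{E}nd^{\operatorname{fil}}$. A section's differential is only intrinsic on its zero locus, so we restrict to $Z(s_{\operatorname{com}})$. We must check that the derivative kills the image of $\mathfrak{p}$ there, so that it descends to the quotient. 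The Jacobi identity gives
\[[[\xi,A_1],A_2]+[A_1,[\xi,A_2]]=[\xi,[A_1,A_2]],\]
and the right-hand side vanishes where $[A_1,A_2]=0$. So the map factors through the quotient by $\mathcal{E}nd^{\operatorname{fil}}$, and by equivariance it descends to $\operatorname{NHilb}^{\underline{d}}(\A^2)$, which is the claimed formula.

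For $\mathcal{E}$, the section $s_{\operatorname{com}}$ viewed in $\mathcal{E}$ is the image of the section in $\mathcal{E}_{\operatorname{naive}}$ under the quotient bundle map $\mathcal{E}_{\operatorname{naive}}\to\mathcal{E}$. Differentials commute with composing by a linear bundle map on the zero locus, so the differential into $\mathcal{E}$ is the composite.

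The main point requiring care is the descent bookkeeping: one has to check that the identification of $T_{\operatorname{ncNHilb}}$ with the descended quotient is compatible with the $T$-linearization. The subtlety is that $\mathcal{V}_\bullet$ has trivial $T$-action while the operators are twisted, and the sequence must hold as $T$-equivariant sheaves and not merely as sheaves. The remaining steps are direct computations.
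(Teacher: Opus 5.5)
Your proof is correct and takes the same route as the paper. You use the Atiyah (tangent) sequence of the $P_{\underline{d}}$-torsor $\pi$, with $T_{\widetilde{\operatorname{ncNHilb}^{\underline{d}}}(\A^2)}=t_1^{-1}\mathcal{E}nd^{\operatorname{fil}}(V_\bullet)\oplus t_2^{-1}\mathcal{E}nd^{\operatorname{fil}}(V_\bullet)\oplus V_0$ and $\mathfrak{p}\otimes\oo\cong\mathcal{E}nd^{\operatorname{fil}}(V_\bullet)$, descend along $\operatorname{QCoh}^{P_{\underline{d}}}\simeq\operatorname{QCoh}$, and compute the differential upstairs as $[\phi_1,\widetilde{\Phi}_2]+[\widetilde{\Phi}_1,\phi_2]$ before descending. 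Your Jacobi-identity check that this map kills the image of $\mathfrak{p}$ on the zero locus, and your direct check that the infinitesimal action is injective, make explicit two points the paper leaves implicit.
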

 \begin{proof}

We consider the Atiyah sequence 
\[0\to \mathfrak{p}\otimes_{\C} \mathcal{O}\to T_{\widetilde{\operatorname{ncNHilb}^{\underline{d}}}(\A^2)}\to \pi^*T_{\operatorname{ncNHilb}^{\underline{d}}(\A^2)}\to 0\]
for the principal $P_{\underline{d}}$-bundle $\pi:\widetilde{\operatorname{ncNHilb}^{\underline{d}}}(\A^2)\to \operatorname{ncNHilb}^{\underline{d}}(\A^2)$ where $\mathfrak{p}$ is the Lie algebra of $P_{\underline{d}}$. Setting $V_\bullet=N_\bullet\otimes_{\C} \oo_{\widetilde{\operatorname{ncNHilb}^{\underline{d}}}(\A^2)}$ for $N_\bullet$ the reference flag we see that
\[T_{\widetilde{\operatorname{ncNHilb}^{\underline{d}}}(\A^2)}=t^{-1}_1\mathcal{E}nd^{\operatorname{fil}}(V_\bullet)\oplus t^{-1}_2 \mathcal{E}nd^{\operatorname{fil}}(V_\bullet)\oplus V_0\]
while 
\[\mathfrak{p}\otimes_{\C} \mathcal{O}\cong \mathcal{E}nd^{\operatorname{fil}}(V_\bullet).\]
Note that $V_\bullet$ with its natural $P_{\underline{d}}$-equivariant structure descends to the universal flag $\mathcal{V}_\bullet$ under the equivalence
\[\operatorname{QCoh}(\widetilde{\operatorname{ncNHilb}^{\underline{d}}}(\A^2))^{P_{\underline{d}}}\cong \operatorname{QCoh}(\operatorname{ncNHilb}^{\underline{d}}(\A^2)).\]
Using faithful flatness of $\pi$, we therefore get that the Atiyah sequence descends to the desired short exact sequence
\[0\to  \mathcal{E}nd^{\operatorname{fil}}(\mathcal{V}_\bullet) \to t^{-1}_1\mathcal{E}nd^{\operatorname{fil}}(\mathcal{V}_\bullet)\oplus t^{-1}_2\mathcal{E}nd^{\operatorname{fil}}(\mathcal{V}_\bullet)\oplus \mathcal{V}_0\to T_{\operatorname{ncNHilb}^{\underline{d}}(\A^2)}\to 0 \]
where the first map is given by $\phi\mapsto ([\phi,\Phi_1],[\phi,\Phi_2],\phi \mathfrak{v})$. Similarly, on the commuting locus $Z(s_{\operatorname{com}})=\operatorname{NHilb}^{\underline{d}}(\A^2)$ the map
\[\text{d}s_{\operatorname{com}}:T_{\operatorname{ncNHilb}^{\underline{d}}(\A^2)}|_{\operatorname{NHilb}^{\underline{d}}(\A^2)}\to \mathcal{E}_{\operatorname{naive}}|_{\operatorname{NHilb}^{\underline{d}}(\A^2)}\]
of equivariant sheaves is induced by the map
\[t_1^{-1}\mathcal{E}nd^{\operatorname{fil}}(V_\bullet)\oplus t_2^{-1}\mathcal{E}nd^{\operatorname{fil}}(V_\bullet)\to t_1^{-1}t_2^{-1}\mathcal{E}nd^{\operatorname{fil}}(V_\bullet)\]
given by $(\phi_1,\phi_2,v)\mapsto [\phi_1,\widetilde{\Phi}_2]+[\widetilde{\Phi}_1,\phi_2]$ which descends to the map of the lemma.
\end{proof}

As a first step, we consider the non-nested case $\underline{d}=(d)$. Here, we simply have the Hilbert scheme of points $\operatorname{Hilb}^d(\A^2)$ and we will prove that $\mathbb{E}\to \mathbb{L}_{\operatorname{Hilb}^d(\A^2)}$ is indeed just the smooth perfect obstruction theory. We similarly have that $\operatorname{NHilb}^{(0,d)}(\A^n)=\operatorname{Hilb}^d(\A^n)$ and we prove that $\mathbb{E}\to \mathbb{L}_{\operatorname{NHilb}^{(0,d)}(\A^2)}$ has obstruction bundle $h^{-1}(\mathbb{E})^\vee$ given by the dual of the tautological bundle of the canonical line bundle of $\C^2$.
\begin{lemma}
    Let $d\in \Z$. The map $\mathbb{E}\to \mathbb{L}_{\operatorname{Hilb}^d(\A^2)}$ is an equivalence, i.e. the perfect obstruction theory is the smooth one. \label{smooth-obstruction}
\end{lemma}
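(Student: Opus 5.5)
The plan is to show directly that $h^{-1}(\mathbb{E})=0$ and that $h^0$ is an isomorphism. Here $\underline{d}=(d)$ with $d\neq 0$, so $\mathcal{E}_{\underline{d}}=\mathcal{E}=t_1^{-1}t_2^{-1}\mathcal{H}om(\mathcal{V}_0,\overline{\mathcal{V}}_0)$. This bundle has rank $d^2-d$ on the smooth space $\operatorname{ncHilb}^d(\A^2)$ of dimension $d^2+d$, so the expected dimension is $2d=\dim\operatorname{Hilb}^d(\A^2)$. (For $d=0$ everything is a point and there is nothing to prove.)

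Since $\operatorname{Hilb}^d(\A^2)$ is smooth of dimension $2d$, it suffices to show that the map
\[
\mathrm{d}s_{\operatorname{com}}:T_{\operatorname{ncHilb}^d(\A^2)}|_{\operatorname{Hilb}^d(\A^2)}\to \mathcal{E}|_{\operatorname{Hilb}^d(\A^2)}
\]
is surjective. Granting this, its kernel is a rank $2d$ bundle containing the Zariski tangent space $T_{\operatorname{Hilb}^d}$, which also has rank $2d$, so the two agree. Then $\mathbb{E}^\vee=[T_{\operatorname{nc}}|\to \mathcal{E}|]$ is quasi-isomorphic to $T_{\operatorname{Hilb}^d}[0]$. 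Dually, $\mathbb{E}\simeq\Omega_{\operatorname{Hilb}^d}=\mathbb{L}_{\operatorname{Hilb}^d}$, and the obstruction-theory map is an isomorphism on $h^0$ with $h^{-1}=0$. Surjectivity of a map of vector bundles can be checked on fibres at closed points, i.e. at ideals $I\subset\C[x,y]$ of colength $d$, with $V=\C[x,y]/I$, $A_1=x$, $A_2=y$, $v=\bar 1$.

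By \cref{differential identification lemma}, the fibre map is
\[
(\phi_1,\phi_2,w)\mapsto [\phi_1,A_2]+[A_1,\phi_2] \bmod \Hom(V,\C v).
\]
I would dualize using the trace pairing. The annihilator of $\Hom(V,\C v)$ in $\operatorname{End}(V)$ is $\{C : Cv=0\}$. A matrix $C$ pairs to zero with every $[\phi_1,A_2]$ and every $[A_1,\phi_2]$ exactly when $[A_2,C]=0$ and $[A_1,C]=0$. Surjectivity is therefore equivalent to the following statement: any $C\in\operatorname{End}(V)$ commuting with $A_1$ and $A_2$ and satisfying $Cv=0$ must vanish. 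This holds by cyclicity, since $\ker C$ contains $v$ and is $A_1,A_2$-stable, hence $\ker C=V$. This is the same argument as the freeness lemma.

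The only real obstacle is keeping track of the identification between the two-term complex and the cotangent complex. Specifically, one needs that the induced map $h^0(\mathbb{E})\to \Omega_{\operatorname{Hilb}^d}$ is the natural one; this is automatic from \cref{simple zero locus}, since $h^0$ of any such zero-locus obstruction theory is always an isomorphism. One also needs that vanishing of $h^{-1}(\mathbb{E})=\ker(\mathcal{E}^\vee|\to\Omega|)$ follows from fibrewise injectivity of $(\mathrm{d}s)^\vee$, which is the dual of the surjectivity shown above. Finally, the local-freeness of $\overline{\mathcal{V}}_0$ established earlier ensures that restricting to fibres commutes with the quotient.
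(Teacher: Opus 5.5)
Your proposal is correct and is essentially the paper's argument in dual form. The paper proves injectivity of $\mathcal{E}^\vee\to\Omega_{\operatorname{ncHilb}^d(\A^2)}$ on $\operatorname{Hilb}^d(\A^2)$ by using the trace pairing to identify it with $\phi\mapsto([\Phi_2,\phi q],[\phi q,\Phi_1],0)$ on $\mathcal{H}om(\overline{\mathcal{V}},\mathcal{V})$ (your $\{C : Cv=0\}$), and kills the kernel with exactly your cyclicity observation; you instead check the equivalent surjectivity of $\mathrm{d}s_{\operatorname{com}}$ fibrewise at closed points, and your rank comparison with the Zariski tangent space is harmless but not needed once $h^{-1}(\mathbb{E})=0$.
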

\begin{proof}
By smoothness of $\operatorname{Hilb}^d(\A^2)$ we have $\mathbb{L}_{\operatorname{Hilb}^d(\A^2)}=\Omega_{\operatorname{Hilb}^d(\A^2)}[0]$. Since we also have that the virtual dimension agrees with the actual dimension it will suffice to prove that
    \[\mathcal{E}^\vee \to \Omega_{\operatorname{ncHilb}^d(\A^2)}\]
is injective. Using the trace pairing and \cref{differential identification lemma} we can explicitly identify this map with
\[\mathcal{H}om(\overline{\mathcal{V}},\mathcal{V})\to \operatorname{ker}\left(\mathcal{E}nd(\mathcal{V})^{\oplus2}\oplus \mathcal{V}\to \mathcal{E}nd(\mathcal{V})\right)\]
which is given by $\phi\mapsto ([\Phi_2,\phi q],[\phi q,\Phi_1],0)$ for $q:\mathcal{V}\to \overline{\mathcal{V}}$ the quotient map. If $\phi$ is in the kernel of this map, then $\phi q$ is an endomorphism of $\mathcal{V}$ which vanishes on the cyclic vector and commutes with $\Phi_1,\Phi_2$ hence it must be $0$. Since $q$ is an epimorphism we conclude that $\phi=0$ as desired.  
\end{proof}
\begin{lemma}
    Let $d\in \Z$. For $\mathbb{E}\to \mathbb{L}_{\operatorname{NHilb}^{(0,d)}(\A^2)}$ we have $h^{-1}(\mathbb{E})^\vee\cong (\omega_{\A^2}^{[d]})^\vee$ i.e. the obstruction bundle is the dual of the tautological bundle of the dualizing bundle.
\end{lemma}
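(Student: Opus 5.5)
The plan is to reduce to the non-nested case, where \cref{smooth-obstruction} already does the main work, and then read off the obstruction sheaf from a short exact sequence of bundles. Since $d_0=0$, the flag $\mathcal{V}_0=\mathcal{V}_1\supset\mathcal{V}_2=0$ is trivial. So the stable-tuple description identifies $\operatorname{ncNHilb}^{(0,d)}(\A^2)$ with $\operatorname{ncHilb}^d(\A^2)$, and $\operatorname{NHilb}^{(0,d)}(\A^2)$ with $\operatorname{Hilb}^d(\A^2)$. Under this identification the universal data agree and $\mathcal{E}nd^{\operatorname{fil}}(\mathcal{V}_\bullet)=\mathcal{E}nd(\mathcal{V})$. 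By \cref{POT DEF}, for $d_0=0$ the bundle in use is $\mathcal{E}_{\operatorname{naive}}=t_1^{-1}t_2^{-1}\mathcal{E}nd(\mathcal{V})$. Since $\mathbb{E}=[\mathcal{E}_{\operatorname{naive}}^\vee|_X\to\Omega|_X]$ with $X=\operatorname{Hilb}^d(\A^2)$, the obstruction sheaf is
\[h^{-1}(\mathbb{E})^\vee=\operatorname{coker}\bigl(\text{d}s_{\operatorname{com}}:T_{\operatorname{ncHilb}^d(\A^2)}|_X\to\mathcal{E}_{\operatorname{naive}}|_X\bigr).\]
Because $X$ is smooth, $h^{-1}(\mathbb{E})$ is the kernel of a map of bundles whose cokernel $\Omega_X$ is locally free, so it is itself locally free and dualizing commutes with taking this cokernel.

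The key tool is the equivariant short exact sequence
\[0\to t_1^{-1}t_2^{-1}\mathcal{H}om(\mathcal{V},\mathcal{L})\to\mathcal{E}_{\operatorname{naive}}\to\mathcal{E}\to0,\qquad \mathcal{L}=\operatorname{im}\mathfrak{v}.\]
Here $\mathcal{L}$ is a line subbundle by the cokernel lemma. Moreover $\mathfrak{v}:\oo\xrightarrow{\sim}\mathcal{L}$ is equivariant with $\oo$ carrying the trivial action, so the kernel is $t_1^{-1}t_2^{-1}\mathcal{V}^\vee$.

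By \cref{smooth-obstruction}, the composite $T|_X\to\mathcal{E}_{\operatorname{naive}}|_X\to\mathcal{E}|_X$ is surjective. I would check that this holds fiberwise, not just as sheaves: the argument there (an endomorphism commuting with $\Phi_1,\Phi_2$ and killing the cyclic vector is zero) works verbatim at each closed point, so $\mathcal{E}^\vee|_X\to\Omega|_X$ is injective on fibers. Its kernel is $T_X$, of rank $2d$.

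Now count ranks. The space $\operatorname{ncHilb}^d(\A^2)$ has dimension $d^2+d$, so $\operatorname{im}(\text{d}s_{\operatorname{com}})$ has rank $d^2+d-2d=d^2-d=\operatorname{rk}\mathcal{E}$. Since the image surjects onto $\mathcal{E}|_X$ and has the same rank, it maps isomorphically onto $\mathcal{E}|_X$. Hence the image meets the kernel $t_1^{-1}t_2^{-1}\mathcal{V}^\vee|_X$ trivially and splits the sequence. The cokernel of $\text{d}s_{\operatorname{com}}$ is therefore isomorphic to the kernel term:
\[h^{-1}(\mathbb{E})^\vee\cong t_1^{-1}t_2^{-1}\mathcal{V}^\vee|_X.\]

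Finally, by the tautological-sheaf remark, $\mathcal{V}|_X=\oo_{\A^2}^{[d]}$. Since $\omega_{\A^2}=t_1t_2\oo_{\A^2}$ equivariantly, we get $t_1t_2\oo_{\A^2}^{[d]}=\omega_{\A^2}^{[d]}$, and dualizing gives $h^{-1}(\mathbb{E})^\vee\cong(\omega_{\A^2}^{[d]})^\vee$.

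The main obstacle I expect is bookkeeping rather than substance. One point is making sure the surjectivity from \cref{smooth-obstruction} is a statement about subbundles, which is what makes the image of $\text{d}s_{\operatorname{com}}$ a rank $d^2-d$ subbundle. The other is tracking the equivariant weights in $\mathcal{H}om(\mathcal{V},\mathcal{L})$, where the trivial weight of $\mathfrak{v}$ is what produces exactly the twist $t_1t_2$ after dualizing.
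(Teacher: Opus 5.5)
Your proof is correct, but it takes a different route from the paper: you work with the obstruction sheaf, while the paper works with its dual. The paper computes $h^{-1}(\mathbb{E})=\ker(\mathcal{E}_{\operatorname{naive}}^\vee\to\Omega_{\operatorname{ncHilb}^d(\A^2)})|_X$ directly. Via the trace pairing and \cref{differential identification lemma}, this kernel is the sheaf of endomorphisms of $\mathcal{V}$ commuting with $\Phi_1,\Phi_2$. Evaluation at the cyclic vector then identifies that commutant with $\mathcal{V}|_X=\oo_{\A^2}^{[d]}$. The map is injective by cyclicity, and surjective because on the commutativity locus left multiplication by any section of the algebra $\mathcal{V}$ commutes with the universal operators.

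You instead compute $\operatorname{coker}(\text{d}s_{\operatorname{com}})$. You show that the natural subbundle $\ker(\mathcal{E}_{\operatorname{naive}}\to\mathcal{E})=t_1^{-1}t_2^{-1}\mathcal{H}om(\mathcal{V},\mathcal{L})\cong t_1^{-1}t_2^{-1}\mathcal{V}^\vee$ is a complement to $\operatorname{im}(\text{d}s_{\operatorname{com}})$. Transversality comes from \cref{smooth-obstruction}, correctly upgraded to a fiberwise statement, and the splitting from a rank count.

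What each route buys:
\begin{itemize}
\item The paper's argument needs neither smoothness nor the dimension $2d$ of $\operatorname{Hilb}^d(\A^2)$. It also explains conceptually why a tautological bundle appears: the commutant of a cyclic commuting pair is the algebra it generates.
\item Yours reuses the previous lemma plus smoothness of $\operatorname{Hilb}^d(\A^2)$. In exchange it gives a canonical isomorphism $t_1^{-1}t_2^{-1}\mathcal{H}om(\mathcal{V},\mathcal{L})|_X\xrightarrow{\sim}h^{-1}(\mathbb{E})^\vee$. It also shows that the naive and reduced obstruction theories differ exactly by this kernel.
\end{itemize}

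Your equivariant bookkeeping is the consistent one. Your $t_1^{-1}t_2^{-1}(\oo_{\A^2}^{[d]})^\vee$ equals $(\omega_{\A^2}^{[d]})^\vee$ under the convention $\omega_{\A^2}=t_1t_2\oo_{\A^2}$ used in the proof of \cref{Main theorem}. The paper's proof instead writes $\mathcal{E}_{\operatorname{naive}}^\vee\cong t_1^{-1}t_2^{-1}\mathcal{E}nd(\mathcal{V})$ and $\omega_{\A^2}=t_1^{-1}t_2^{-1}\oo_{\A^2}$. These are two inverted characters that cancel each other.
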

\begin{proof}
    It will suffice to identify
    \[ t^{-1}_1t^{-1}_2\mathcal{V}|_{\operatorname{Hilb}^d(\A^2)} \cong \operatorname{ker} (\mathcal{E}_{\operatorname{naive}}^{\vee}\to \Omega_{\operatorname{ncHilb}^d(\A^2)})|_{\operatorname{Hilb}^d(\A^2)}\]
    since 
    \[t^{-1}_1t^{-1}_2\mathcal{V}|_{\operatorname{Hilb}^d(\A^2)}=t^{-1}_1t^{-1}_2\oo_{\A^2}^{[d]}=(t^{-1}_1t^{-1}_2\oo_{\A^2})^{[d]}=\omega_{\A^2}^{[d]}\]
    by construction. Repeating the arguments in the proof of \cref{smooth-obstruction} we can directly identify the above kernel with the operators of
    \[t^{-1}_1t^{-1}_2\mathcal{E}nd(\mathcal{V})\cong \mathcal{E}_{\operatorname{naive}}^\vee\]
    which commutes with the universal operators $\Phi_1,\Phi_2$. Clearly such an operator is uniquely determined by its value on the cyclic vector by the cyclicity condition. Conversely, on the commutativity locus we have an algebra structure on $\mathcal{E}$ viewing it as a quotient of $\oo[x,y]$ by a left (hence two-sided) ideal. It follows that for any $w\in \mathcal{V}$ we have an operator $L_w\in \mathcal{E}nd(\mathcal{V})$ given by left multiplication with $w$. This commutes with $\Phi_1,\Phi_2$ and evaluates to $w$ on the cyclic vector. We conclude that the evaluation map 
    \[\operatorname{ev}_{\mathfrak{v}}:t_1^{-1}t_2^{-1}\mathcal{E}nd(\mathcal{V})|_{\operatorname{Hilb}^{d}(\A^2)}\to t_1^{-1}t_2^{-1}\mathcal{V}|_{\operatorname{Hilb}^d(\A^2)}\]
    restricts to an isomorphism on the above kernel as desired.
\end{proof}

Next, we will prove that our virtual class is equal to that of Gholampour, Sheshmani and Yau \cite{GHOLAMPOUR2020107046}. 
\begin{prop}\label{GSY comparison}
    Let $\underline{d}=(d_0,\dots, d_{r})\in \Z^{r+1}_{\geq 0}$. Let $\A^2\subset \mathbb{P}^2$ as the complement of the hyperplane at $\infty$. Let $\widetilde{\mathbb{E}}_{\operatorname{GSY}}\to \mathbb{L}_{\operatorname{NHilb}^{\underline{d}}(\mathbb{P}^2)}$ be the perfect obstruction theory defined in \cite{GHOLAMPOUR2020107046} for the Hilbert scheme of a smooth projective surface. Let $\mathbb{E}_{\operatorname{GSY}}\to \mathbb{L}_{\operatorname{NHilb}^{\underline{d}}(\mathbb{P}^2)}$ be the restriction to the Zariski open $\operatorname{NHilb}^{\underline{d}}(\A^2)\hookrightarrow \operatorname{NHilb}^{\underline{d}}(\mathbb{P}^2)$ induced by $\A^2\hookrightarrow\mathbb{P}^2$. Then in $T$-equivariant $K$-theory we have the identity
    \[[\mathbb{E}_{\operatorname{GSY}}]=[\mathbb{E}]\]
    where $\mathbb{E}$ is the perfect obstruction theory constructed above \cref{POT DEF}. In particular, both theories induce the same virtual invariants, i.e. the same virtual fundamental class and virtual structure sheaf agree.
\end{prop}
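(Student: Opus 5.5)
Since both virtual classes and virtual structure sheaves depend only on the $K$-theory class of the obstruction theory, it suffices to compare $[\mathbb{E}^\vee]=T^{\operatorname{vir}}$ in $K^0_T(\operatorname{NHilb}^{\underline{d}}(\A^2))$. We compute both sides explicitly in terms of the tautological classes $\mathcal{O}^{[n_i]}$, where $n_i=d_0+\cdots+d_{i-1}$ and $\mathcal{O}_{Z_i}$ denotes the universal structure sheaves.

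On our side, \cref{POT DEF} and the short exact sequence of \cref{differential identification lemma} give
\[
T^{\operatorname{vir}}=(t_1^{-1}+t_2^{-1}-1)[\mathcal{E}nd^{\operatorname{fil}}(\mathcal{V}_\bullet)]+[\mathcal{V}_0]-[\mathcal{E}_{\underline{d}}].
\]
Here $[\mathcal{E}_{\operatorname{naive}}]=t_1^{-1}t_2^{-1}[\mathcal{E}nd^{\operatorname{fil}}]$ and, for $d_0\neq0$, $[\mathcal{E}]=t_1^{-1}t_2^{-1}([\mathcal{E}nd^{\operatorname{fil}}]-[\mathcal{V}_0^\vee])$, because $\overline{\mathcal{V}}_\bullet$ differs from $\mathcal{V}_\bullet$ only by the line $\mathcal{L}\cong\mathcal{O}$ in the top piece. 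Filtering by the graded pieces $\mathcal{V}_i/\mathcal{V}_{i+1}=\mathcal{O}^{[n_{i+1}]}-\mathcal{O}^{[n_i]}$ gives
\[
[\mathcal{E}nd^{\operatorname{fil}}(\mathcal{V}_\bullet)]=\sum_{j\le i}(\mathcal{V}_j/\mathcal{V}_{j+1})^\vee(\mathcal{V}_i/\mathcal{V}_{i+1}).
\]
Using the remark on tautological sheaves, $T^{\operatorname{vir}}$ becomes an explicit polynomial in the classes $\mathcal{O}^{[n_i]}$, their duals, and the characters $t_1,t_2$, namely
\[
-(1-t_1^{-1})(1-t_2^{-1})\sum_{j\le i}(\cdots)+\mathcal{O}^{[n_{r+1}]}+t_1^{-1}t_2^{-1}(\mathcal{O}^{[n_{r+1}]})^\vee.
\]

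On the GSY side, the obstruction theory is built from $R\mathcal{H}om_\pi(I_{i},I_{i})_0$ and $R\mathcal{H}om_\pi(I_i,I_{i+1})$. Its virtual tangent bundle is
\[
T_{\operatorname{GSY}}=\sum_{i}T_{S^{[n_i]}}-\sum_{i}\bigl(R\mathcal{H}om_\pi(I_i,I_{i+1})-\text{(correction)}\bigr),
\]
with the precise shape read off from \cite{GHOLAMPOUR2020107046} or \cite{Gholampour2017DegeneracyLV}. Restricted to $\A^2$, we use the standard equivariant identity
\[
R\mathcal{H}om_\pi(I_a,I_b)=\chi(\mathcal{O},\mathcal{O})-\chi(\mathcal{O}_{Z_a},\mathcal{O}_{Z_b}),\qquad \chi(\mathcal{O}_{Z_a},\mathcal{O}_{Z_b})=(1-t_1^{-1})(1-t_2^{-1})\,(\mathcal{O}^{[n_a]})^\vee\mathcal{O}^{[n_b]},
\]
which holds up to the orientation of the characters under the paper's conventions. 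This identity follows by equivariant Serre duality and Koszul resolution on $\A^2$, with the infinite $\chi(\mathcal{O},\mathcal{O})$ terms cancelling. In particular $T_{\operatorname{Hilb}^{n}}=\mathcal{O}^{[n]}+t_1^{-1}t_2^{-1}(\mathcal{O}^{[n]})^\vee-(1-t_1^{-1})(1-t_2^{-1})(\mathcal{O}^{[n]})^\vee\mathcal{O}^{[n]}$. Substituting, $T_{\operatorname{GSY}}$ also becomes a polynomial in the same variables.

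The proof then reduces to an identity of such polynomials. Both sides are additive in the tautological data, so we can check it by matching the quadratic terms $(\mathcal{O}^{[n_j]})^\vee\mathcal{O}^{[n_i]}$ and then the linear terms. Expanding the filtered $\mathcal{E}nd^{\operatorname{fil}}$ sum telescopically should reproduce exactly the alternating combination of $\chi(\mathcal{O}_{Z_i},\mathcal{O}_{Z_j})$ appearing in GSY. The case $d_0=0$, with $\mathcal{E}_{\operatorname{naive}}$, is handled identically, with $\mathcal{O}^{[n_1]}=0$.

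I expect the main obstacle to be pinning down the exact $K$-class of the GSY obstruction theory, including its trace-free corrections and any boundary terms at the first and last levels. The dual/character conventions also have to match ours exactly. As a safeguard, I would also verify the identity at each $T$-fixed point (nested Young diagrams) by comparing characters. Since the fixed points are isolated and $K$-theory localizes, agreement of both classes after restriction to all fixed points, for classes that are polynomial in tautological bundles, gives the result. Combined with the dependence of virtual invariants on $[\mathbb{E}]$ only, this proves the final claim.
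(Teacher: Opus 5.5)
Your route is the same as the paper's: the Atiyah-type sequence for $T_{\operatorname{ncNHilb}}$, telescoping over the graded pieces $\mathcal{Q}_{i+1}-\mathcal{Q}_i$, and the Gholampour--Thomas form $[\bigoplus_i T_{\operatorname{Hilb}^{n_i}}\to\bigoplus_i\mathcal{E}xt^1_{\widetilde p}(\widetilde{\mathcal{I}}_{i+1},\widetilde{\mathcal{I}}_i)_0]$ of the GSY theory. However, two of your inputs are wrong, and as you set it up the two sides would not match.

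\textbf{Our side.} The claim $[\mathcal{E}]=t_1^{-1}t_2^{-1}([\mathcal{E}nd^{\operatorname{fil}}(\mathcal{V}_\bullet)]-[\mathcal{V}_0^\vee])$ is false. The kernel of $\mathcal{E}nd^{\operatorname{fil}}(\mathcal{V}_\bullet)\twoheadrightarrow\mathcal{H}om^{\operatorname{fil}}(\mathcal{V}_\bullet,\overline{\mathcal{V}}_\bullet)$ consists of flag-preserving maps with image in $\mathcal{L}=\operatorname{im}\mathfrak{v}$. When $d_0\neq 0$ we have $\mathcal{L}\cap\mathcal{V}_1=0$, so such maps must kill $\mathcal{V}_1$. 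The kernel is therefore $(\mathcal{V}_0/\mathcal{V}_1)^\vee=\mathcal{Q}_1^\vee$, not $\mathcal{V}_0^\vee=\mathcal{Q}_{r+1}^\vee$.

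Consequently the linear part of $T^{\operatorname{vir}}$ is $\mathcal{Q}_{r+1}+t_1^{-1}t_2^{-1}\mathcal{Q}_1^\vee$, not $\mathcal{Q}_{r+1}+t_1^{-1}t_2^{-1}\mathcal{Q}_{r+1}^\vee$. Your version has rank $2d$ instead of the virtual dimension $d+d_0$. It differs from the correct class by $t_1^{-1}t_2^{-1}\mathcal{V}_1^\vee$, which is nonzero whenever $d_1+\cdots+d_r>0$.

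This correction is also what makes your remark about $d_0=0$ true. The subtracted term $(\operatorname{gr}^0\mathcal{V}_\bullet)^\vee$ vanishes when $d_0=0$, so $\mathcal{E}$ and $\mathcal{E}_{\operatorname{naive}}$ give one uniform formula. With $\mathcal{V}_0^\vee$ the two cases would disagree and could not both match GSY.

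\textbf{GSY side.} The relevant sheaves are $\mathcal{E}xt^1(\mathcal{I}_{i+1},\mathcal{I}_i)$, from the smaller ideal into the larger one, since they govern deformations of the inclusion $\mathcal{I}_{i+1}\subset\mathcal{I}_i$. They are not $R\mathcal{H}om_\pi(I_i,I_{i+1})$; reversing the direction turns the boundary terms into $\mathcal{Q}_1+t_1^{-1}t_2^{-1}\mathcal{Q}_{r+1}^\vee$. Your displayed identity $R\mathcal{H}om_\pi(I_a,I_b)=\chi(\oo,\oo)-\chi(\oo_{Z_a},\oo_{Z_b})$ drops the cross terms $-\chi(\oo,\oo_{Z_b})-\chi(\oo_{Z_a},\oo)$. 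These are exactly the linear terms $\mathcal{Q}_b+t_1^{-1}t_2^{-1}\mathcal{Q}_a^\vee$ you need to match. Also, $R\mathcal{H}om$ enters $T_{\operatorname{GSY}}$ with a plus sign, because $\mathcal{E}xt^1_0=\oo-R\mathcal{H}om$ in $K$-theory.

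To make the step ``the infinite terms cancel'' rigorous, compute on $\mathbb{P}^2$:
\begin{itemize}
\item $\mathcal{E}xt^0_{\widetilde p}(\widetilde{\mathcal{I}}_{i+1},\widetilde{\mathcal{I}}_i)=\oo$, via the inclusion;
\item $\mathcal{E}xt^2=0$ by Serre duality, since $H^0(\omega_{\mathbb{P}^2})=0$;
\item the trace-free part is all of $\mathcal{E}xt^1$, since $H^1(\oo_{\mathbb{P}^2})=0$.
\end{itemize}
Then base change to the open locus. There the remaining terms are supported over $\A^2$ and give $\mathcal{Q}_i+t_1^{-1}t_2^{-1}\mathcal{Q}_{i+1}^\vee-\frac{(1-t_1)(1-t_2)}{t_1t_2}\mathcal{Q}_{i+1}^\vee\mathcal{Q}_i$.

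With both fixes, $\sum_{i=1}^{r+1}T_{\operatorname{Hilb}^{n_i}}-\sum_{i=1}^{r}\mathcal{E}xt^1(\mathcal{I}_{i+1},\mathcal{I}_i)_0$ telescopes to $\mathcal{Q}_{r+1}+t_1^{-1}t_2^{-1}\mathcal{Q}_1^\vee-\frac{(1-t_1)(1-t_2)}{t_1t_2}\sum_{j=0}^r\mathcal{Q}_{j+1}^\vee(\mathcal{Q}_{j+1}-\mathcal{Q}_j)$, which is the corrected left-hand side.

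Finally, your fixed-point check only gives the identity after localization. That is enough for the equivariant virtual invariants, but not for the stated equality in $T$-equivariant $K$-theory.
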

\begin{proof}
    We dualize and show that virtual tangent bundles agree. To avoid cumbersome notation we will for $\mathcal{F}$ a coherent/locally free sheaf simply denote its $K$-theory class by $\mathcal{F}$ instead of $[\mathcal{F}]$. By definition
    \[\mathbb{E}^{\vee}=\left\{\begin{array}{cl}
      T_{\operatorname{ncNHilb}^{\underline{d}}(\A^2)}|_{\operatorname{NHilb}^{\underline{d}}(\A^2)}-\mathcal{E}|_{\operatorname{NHilb}^{\underline{d}}(\A^2)}   & \text{if } d_0\neq 0 \\
      T_{\operatorname{ncNHilb}^{\underline{d}}(\A^2)}|_{\operatorname{NHilb}^{\underline{d}}(\A^2)}-\mathcal{E}_{\operatorname{naive}}|_{\operatorname{NHilb}^{\underline{d}}(\A^2)}   & \text{if } d_0= 0
    \end{array}\right.\]
    By \cref{differential identification lemma} we can rewrite $T_{\operatorname{ncNHilb}^{\underline{d}}(\A^2)}|_{\operatorname{NHilb}^{\underline{d}}(\A^2)}$ as
    \[\mathcal{V}_0+(t^{-1}_1+t^{-1}_2-1)\mathcal{E}nd^{\operatorname{fil}}(\mathcal{V}_\bullet)=\mathcal{V}_0+(t^{-1}_1+t^{-1}_2-1)\sum_{0\leq i \leq j\leq r-1} (\operatorname{gr}^i\mathcal{V}_\bullet)^\vee \cdot \operatorname{gr}^j\mathcal{V}_\bullet.\]
    Letting $\mathcal{Q}_i=\oo_{\A^2}^{[d_0,\dots, d_{i-1}]}$ be the tautological sheaf for the $i$'th subscheme of the nesting, we have for each $0<i\leq r+1$ a short exact sequence
    \[0\to \mathcal{V}_i\to \mathcal{Q}_{r+1}\to \mathcal{Q}_i\to 0\]
    and if we set $\mathcal{Q}_0=0$ by convention then the above extends to $i=0$. In particular, we get
    \[\operatorname{gr}^i(\mathcal{V}_\bullet)=\mathcal{V}_i-\mathcal{V}_{i+1}=\mathcal{Q}_{i+1}-\mathcal{Q}_i.\]
    We can then the rewrite 
    \begin{align*}
        \sum_{0\leq i \leq j\leq r} (\operatorname{gr}^i\mathcal{V}_\bullet)^\vee \cdot \operatorname{gr}^j\mathcal{V}_\bullet&= \sum_{0\leq i \leq j\leq r} (\mathcal{Q}_{i+1}^{\vee}-\mathcal{Q}_{i}^{\vee})(\mathcal{Q}_{j+1}-\mathcal{Q}_{j}) \\
        &=\sum_{j=0}^{r}(\mathcal{Q}_{j+1}-\mathcal{Q}_{j})\sum_{i=0}^j(\mathcal{Q}_{i+1}^{\vee}-\mathcal{Q}_{i}^{\vee}) \\
        &=\sum_{j=0}^{r}\mathcal{Q}_{j+1}^{\vee}(\mathcal{Q}_{j+1}-\mathcal{Q}_{j})
    \end{align*}
where the third equality uses that the sum is alternating. It follows that
\[T_{\operatorname{ncNHilb}^{\underline{d}}(\A^2)}|_{\operatorname{NHilb}^{\underline{d}}(\A^2)}=\mathcal{Q}_{r+1}+(t^{-1}_1+t^{-1}_1-1)\sum_{j=0}^{r}\mathcal{Q}_{j+1}^{\vee}(\mathcal{Q}_{j+1}-\mathcal{Q}_{j}).\]
If $d_0\neq 0$ we have
\[\mathcal{E}|_{\operatorname{NHilb}^{\underline{d}}(\A^2)}=t^{-1}_1t^{-1}_2\mathcal{H}om^{\operatorname{fil}}(\mathcal{V}_\bullet, \overline{\mathcal{V}}_\bullet)=t^{-1}_1t^{-1}_2\sum_{0\leq i\leq j \leq r} (\operatorname{gr}^i\mathcal{V}_\bullet)^\vee(\operatorname{gr}^j\overline{\mathcal{V}_\bullet})\]
where 
\[\overline{\mathcal{V}}_i=\left\{\begin{array}{cc}
    \mathcal{V}_0-1 & i=0 \\
    \mathcal{V}_i & i>0
\end{array}\right.\]
hence 
\[\mathcal{E}|_{\operatorname{NHilb}^{\underline{d}}(\A^2)}=t_1^{-1}t_2^{-1}\left(-\operatorname{gr^0}\mathcal{V}_\bullet+\sum_{0\leq i\leq j \leq r} (\operatorname{gr}^i\mathcal{V}_\bullet)^\vee(\operatorname{gr}^j\mathcal{V}_\bullet)\right).\]
For $d_0=0$ we have 
\[\mathcal{E}_{\operatorname{naive}}|_{\operatorname{NHilb}^{\underline{d}}(\A^2)}=t^{-1}_1t^{-1}_2\mathcal{H}om^{\operatorname{fil}}(\mathcal{V}_\bullet, \mathcal{V}_\bullet)=t^{-1}_1t^{-1}_2\sum_{0\leq i\leq j \leq r} (\operatorname{gr}^i\mathcal{V}_\bullet)^\vee(\operatorname{gr}^j\mathcal{V}_\bullet)\]
and so using that $\operatorname{gr}^0(\mathcal{V}_\bullet)=0$ we get the same identity 
\[\mathcal{E}_{\operatorname{naive}}|_{\operatorname{NHilb}^{\underline{d}}(\A^2)}=t_1^{-1}t_2^{-1}\left(-\operatorname{gr^0}\mathcal{V}_\bullet+\sum_{0\leq i\leq j \leq r} (\operatorname{gr}^i\mathcal{V}_\bullet)^\vee(\operatorname{gr}^j\mathcal{V}_\bullet)\right).\]
Using the above we can then identify the bundle term in either case with
\[-t^{-1}_1t^{-1}_2\mathcal{Q}_1^\vee+t^{-1}_1t^{-1}_2\sum_{j=0}^{r}\mathcal{Q}_{j+1}^{\vee}(\mathcal{Q}_{j+1}-\mathcal{Q}_{j}).\]
Using the identity 
\[t^{-1}_1+t^{-1}_2-1-t^{-1}_1t^{-1}_2=-\frac{(1-t_1)(1-t_2)}{t_1t_2}\]
and the above we now arrive at the formula
\[\mathbb{E}^{\vee}=\mathcal{Q}_{r+1}+\frac{\mathcal{Q}_1^\vee}{t_1t_2}-\frac{(1-t_1)(1-t_2)}{t_1t_2}\sum_{j=0}^{r}\mathcal{Q}_{j+1}^{\vee}(\mathcal{Q}_{j+1}-\mathcal{Q}_{j}).\]
On the other side, we use the virtual tangent space found in \cite[Theorem 2]{Gholampour2017DegeneracyLV}. Let $n_i=d_0+\cdots +d_{i-1}$ for $i=1,\dots, r+1$. Then 
\[\widetilde{\mathbb{E}}_{\operatorname{GSY}}^{\vee}\cong \left[ T_{\operatorname{Hilb}^{n_{r+1}}(\mathbb{P}^2)}\oplus \cdots \oplus \operatorname{T}_{\operatorname{Hilb}^{n_1}(\mathbb{P}^2)}\to \mathcal{E}xt^1_{\widetilde{p}}(\widetilde{\mathcal{I}}_{r+1},\widetilde{\mathcal{I}}_{r})_0\oplus \cdots \oplus \mathcal{E}xt_{\widetilde{p}}^1(\widetilde{\mathcal{I}}_2,\widetilde{\mathcal{I}}_1)_0\right]\]
where 
\[\widetilde{p}:\operatorname{NHilb}^{\underline{d}}(\mathbb{P}^2)\times \mathbb{P}^2\to \operatorname{NHilb}^{\underline{d}}(\mathbb{P}^2)\]
is the projection, $\widetilde{\mathcal{I}}_i$ are the ideal sheaves of the universal subschemes $\widetilde{\mathcal{Z}}_i\subset \operatorname{NHilb}^{\underline{d}}(\mathbb{P}^2)\times \mathbb{P}^2$ and $\mathcal{E}xt_{\widetilde{p}}^i(-,-)$ is the $i$'th cohomology sheaf of 

\[R\mathcal{H}om_{\widetilde{p}}(-,-):=\widetilde{p}_*R\mathcal{H}om(-,-)\]
with the subscript $0$ denoting the trace free part. Let $j:\A^2\hookrightarrow \mathbb{P}^2$ and $\iota: \operatorname{NHilb}^{\underline{d}}(\A^2)\hookrightarrow \operatorname{NHilb}^{\underline{d}}(\mathbb{P}^2)$ be the open immersions. Clearly $\iota^* T_{\operatorname{Hilb}^{n_i}(\mathbb{P}^2)}\cong T_{\operatorname{Hilb}^{n_i}(\mathbb{C}^2)}$ and since our perfect obstruction theory for $r=1$ is the smooth one the above calculations show that
\[\iota^* T_{\operatorname{Hilb}^{n_i}(\mathbb{P}^2)}=\mathcal{Q}_i+\frac{\mathcal{Q}_i^\vee}{t_1t_2}-\frac{(1-t_1)(1-t_2)}{t_1t_2}\mathcal{Q}_{i}^\vee \mathcal{Q}_i.\]
To make the notation less cumbersome for the calculation of the ext-term, we will set $S=\mathbb{A}^2$, $\widetilde{S}=\mathbb{P}^2$, $X=\operatorname{NHilb}^{\underline{d}}(\A^2)$ and $\widetilde{X}=\operatorname{NHilb}^{\underline{d}}(\mathbb{P}^2)$. We will let $\mathcal{Z}_k\subset X\times S$ be the universal families with ideals $\mathcal{I}_k$. Note that by definition $(\iota\times \id)^* \widetilde{\mathcal{Z}}_k\subset X\times \widetilde{S}$ is supported on $X\times S$ and agrees with $\mathcal{Z}_k$. We finally let $\omega_S$ and $\omega_{\widetilde{S}}$ be the dualizing bundles. We consider the diagram
\begin{center}
    \begin{tikzcd}
        X\times S \arrow{r}{\operatorname{id}\times j} \arrow{rd}{p} & X\times \widetilde{S} \arrow{d}{p'} \arrow{r}{\iota \times \id} & \widetilde{X}\times \widetilde{S} \arrow{d}{\widetilde{p}} \\
        & X \arrow{r}{\iota} & \widetilde{X}
    \end{tikzcd}
\end{center}
where the square is cartesian. Using that 
\[\widetilde{\mathcal{I}}_k=\oo_{\widetilde{X}\times \widetilde{S}}-\mathcal{O}_{\widetilde{\mathcal{Z}}_k}\]
in $K$-theory we get that $R\mathcal{H}om_{\widetilde{p}}(\widetilde{\mathcal{I}}_{i+1},\widetilde{\mathcal{I}}_{i})$ equals
\[R\mathcal{H}om_{\widetilde{p}}(\oo_{\widetilde{X}\times \widetilde{S}},\oo_{\widetilde{X}\times \widetilde{S}})-R\mathcal{H}om_{\widetilde{p}}(\oo_{\widetilde{X}\times \widetilde{S}},\oo_{\widetilde{\mathcal{Z}_{i}}})-R\mathcal{H}om_{\widetilde{p}}(\oo_{\widetilde{\mathcal{Z}}_{i+1}},\oo_{\widetilde{X}\times \widetilde{S}})+R\mathcal{H}om_{\widetilde{p}}(\oo_{\widetilde{\mathcal{Z}}_{i+1}},\oo_{\widetilde{\mathcal{Z}_{i}}})\]
in $K$-theory. We see that
\[R\mathcal{H}om_{\widetilde{p}}(\oo_{\widetilde{X}\times \widetilde{S}},\oo_{\widetilde{X}\times \widetilde{S}})=R\widetilde{p}_* \oo_{\widetilde{X}\times \widetilde{S}}=\oo_{\widetilde{X}}\]
by the usual calculation of the cohomology of $\mathcal{O}_{\mathbb{P}^2}$. Similarly, we have 
\[R\mathcal{H}om_{\widetilde{p}}(\oo_{\widetilde{X}\times \widetilde{S}},\oo_{\widetilde{\mathcal{Z}_{i}}})=R\widetilde{p}_*\oo_{\widetilde{\mathcal{Z}}_i}.\]
and by Grothendieck duality we have quasi-isomorphisms
\[R\mathcal{H}om_{\widetilde{p}}(\oo_{\widetilde{\mathcal{Z}}_{i+1}},\oo_{\widetilde{X}\times \widetilde{S}})[2]\cong R\mathcal{H}om_{\widetilde{p}}(\omega_{\widetilde{S}}^{-1},\mathcal{O}_{\widetilde{\mathcal{Z}}_{i+1}})^\vee\cong (R\widetilde{p}_*(\omega_{\widetilde{S}}\otimes\oo_{\widetilde{\mathcal{Z}}_{i+1}}))^\vee\]
so that in $K$-theory we get
\[R\mathcal{H}om_{\widetilde{p}}(\oo_{\widetilde{\mathcal{Z}}_{i+1}},\oo_{\widetilde{X}\times \widetilde{S}})=(R\widetilde{p}_*(\omega_{\widetilde{S}}\otimes\oo_{\mathcal{Z}_{i+1}}))^\vee\]
as the shift $[2]$ contributes with the sign $(-1)^2$. Let
\[R\mathcal{H}om_{\widetilde{p}}(\widetilde{\mathcal{I}}_{i+1},\widetilde{\mathcal{I}}_{i})=\mathcal{E}xt^0_{\widetilde{p}}(\widetilde{\mathcal{I}}_{i+1},\widetilde{\mathcal{I}}_{i})-\mathcal{E}xt^1_{\widetilde{p}}(\widetilde{\mathcal{I}}_{i+1},\widetilde{\mathcal{I}}_{i})+\mathcal{E}xt^2_{\widetilde{p}}(\widetilde{\mathcal{I}}_{i+1},\widetilde{\mathcal{I}}_{i}).\]
We first note that
\[\mathcal{H}om_{\widetilde{p}}(\widetilde{\mathcal{I}}_{i+1},\widetilde{\mathcal{I}}_{i})\hookrightarrow\mathcal{H}om_{\widetilde{p}}(\widetilde{\mathcal{I}}_{i+1},\mathcal{O}_{\widetilde{X}\times \widetilde{S}})\cong R\widetilde{p}_*\mathcal{O}_{\widetilde{X}\times \widetilde{S}}\]
and the inclusion $\widetilde{\mathcal{I}}_{i+1}\subset \widetilde{\mathcal{I}}_i$ gives us an inverse providing us with an identification
\[\mathcal{E}xt^0_{\widetilde{p}}(\widetilde{\mathcal{I}}_{i+1},\widetilde{\mathcal{I}}_{i})=R\widetilde{p}_*\oo_{\widetilde{X}\times \widetilde{S}}=\oo_{\widetilde{X}}.\]
For a closed point corresponding to $I_{i+1}\subset I_i$ we have 
\[\operatorname{Hom}_{\widetilde{S}}(I_{i},I_{i+1}\otimes_{\widetilde{S}}\omega_{\widetilde{S}})\hookrightarrow \operatorname{Hom}_{\widetilde{S}}(I_i,\omega_{\widetilde{S}})=H^0(\widetilde{S},\omega_{\widetilde{S}})=0\]
where the last calculation follows since $\omega_{\widetilde{S}}=\mathcal{O}(-3)$. We conclude by Serre duality that $\operatorname{Ext}^2_{\widetilde{S}}(I_{i+1},I_{i})=0$ and hence by Nakayama that 
\[\mathcal{E}xt^2_{\widetilde{p}}(\widetilde{\mathcal{I}}_{i+1},\widetilde{\mathcal{I}}_{i})=0.\]
Note also that
\[\mathcal{E}xt^1_{\widetilde{p}}(\widetilde{\mathcal{I}}_{i+1},\widetilde{\mathcal{I}}_{i})_0=\mathcal{E}xt^1_{\widetilde{p}}(\widetilde{\mathcal{I}}_{i+1},\widetilde{\mathcal{I}}_{i})\]
since $R^1\widetilde{p}_*\oo_{\widetilde{X}\times \widetilde{S}}=0$. We conclude that $\iota^*\mathcal{E}xt^1_{\widetilde{p}}(\widetilde{\mathcal{I}}_{i+1},\widetilde{\mathcal{I}}_{i})_0$ equals 
\[\iota^*R\widetilde{p}_* \mathcal{O}_{\widetilde{\mathcal{Z}_i}}+\iota^*(R\widetilde{p}_*(\omega_{\widetilde{S}}\otimes\oo_{\widetilde{\mathcal{Z}}_{i+1}}))^\vee-\iota^*R\mathcal{H}om_{\widetilde{p}}(\oo_{\widetilde{\mathcal{Z}}_{i+1}},\oo_{\widetilde{\mathcal{Z}_{i}}}).\]
By cohomology and base change we have $\iota^*R\widetilde{p}_*=Rp'_*(\iota\times \id)^*$ and using that $(\iota \times \id)^*\mathcal{O}_{\widetilde{\mathcal{Z}}_k}=(\id \times j)_*\oo_{\mathcal{Z}_k}$ we see that the above equals 
\[Rp_* \mathcal{O}_{\mathcal{Z}_i}+(Rp_*(\omega_{S}\otimes\oo_{\mathcal{Z}_{i+1}}))^\vee -R\mathcal{H}om_{p}(\oo_{\mathcal{Z}_{i+1}},\oo_{\mathcal{Z}_{i}}).\]
Note that $p$ is affine with $p_*\mathcal{O}_{\mathcal{Z}_k}=\mathcal{Q}_k$ by definition. It follows that
\[Rp_*\mathcal{O}_{\mathcal{Z}_i}=\mathcal{Q}_i\]
and since $\omega_S=p^*(t_1t_2\mathcal{O}_X)$ we also get
\[(Rp_*(\omega_{S}\otimes\oo_{\mathcal{Z}_{i+1}}))^\vee=(t_1t_2\mathcal{Q}_{i+1})^\vee=\frac{\mathcal{Q}_{i+1}^{\vee}}{t_1t_2}.\]
Since $\mathcal{O}_{\mathcal{Z}_{i+1}}$ is a perfect complex and $X\times S$ has the resolution property we can find a finite resolution of $T$-equivariant bundles, which is necessarily of the form
\[E_m\otimes_X \oo_{X\times S}\to E_{m-1}\otimes_X \oo_{X\times S}\to \cdots \to E_1\otimes_X \oo_{X\times S}\to \oo_{\mathcal{Z}_{i+1}}\]
for $E_i$ locally free T-equivariant sheaves on $X$ by $K$-theoretic homotopy invariance since $S=\A^2$. Then in $K$-theory
\[R\mathcal{H}om_{p}(\oo_{\mathcal{Z}_{i+1}},\oo_{\mathcal{Z}_{i}})=\sum_{j=1}^m(-1)^jRp_*R\mathcal{H}om_{X\times S}(Lp^*E_j,\mathcal{O}_{\mathcal{Z}_i})=\sum_{j=1}^{m}(-1)^j\mathcal{R}\mathcal{H}om_{X}(E_i,Rp_*\oo_{\mathcal{Z}_i})\]
so that 
\[R\mathcal{H}om_{p}(\oo_{\mathcal{Z}_{i+1}},\oo_{\mathcal{Z}_{i}})=\sum_{j=1}^m(-1)^jE_j^\vee \mathcal{Q}_i.\]
But we see that
\[\mathcal{Q}_{i+1}=p_*\oo_{\mathcal{Z}_{i+1}}=\sum_{j=1}^m (-1)^jp_*(E_i\otimes_X \oo_{X\times S}) \]
and as an $\oo_X$ algebra we have $\oo_{X\times S}=\oo_X[x_1,x_2]$ which has $T$-character
\[\sum_{(a,b)\in \Z^2} t_1^{a}t_2^{b}\oo_X=\frac{1}{(1-t_1)(1-t_2)}\mathcal{O}_X\]
so that 
\[\mathcal{Q}_{i+1}=\sum(-1)^j\frac{1}{(1-t_1)(1-t_2)}E_j\]
hence 
\[\sum(-1)^jE_j^\vee=(1-t_1^{-1})(1-t_2^{-1})\mathcal{Q}_{i+1}^{\vee}=\frac{(1-t_1)(1-t_2)}{t_1t_2}\mathcal{Q}_{i+1}^\vee.\]
Combining this we see that
\[\iota^*\mathcal{E}xt_{\widetilde{p}}^1(\widetilde{\mathcal{I}}_i,\widetilde{\mathcal{I}}_{i+1})_0=\mathcal{Q}_i+\frac{\mathcal{Q}_{i+1}^{\vee}}{t_1t_2}-\frac{(1-t_1)(1-t_2)}{t_1t_2}\mathcal{Q}_{i+1}^\vee\mathcal{Q}_i\]
and so $\mathbb{E}_{\operatorname{GSY}}^\vee$ equals
\begin{align*}
    &\sum_{i=1}^{r+1}\left(\mathcal{Q}_i+\frac{\mathcal{Q}_i^\vee}{t_1t_2}-\frac{(1-t_1)(1-t_2)}{t_1t_2}\mathcal{Q}_{i}^\vee \mathcal{Q}_i\right)-\sum_{i=1}^{r}\left(\mathcal{Q}_i+\frac{\mathcal{Q}_{i+1}^{\vee}}{t_1t_2}-\frac{(1-t_1)(1-t_2)}{t_1t_2}\mathcal{Q}_{i+1}^\vee\mathcal{Q}_i \right)\\
    &=\mathcal{Q}_{r+1}+\frac{\mathcal{Q}_1^\vee}{t_1t_2}-\frac{(1-t_1)(1-t_2)}{t_1t_2}\sum_{j=0}^{r}\mathcal{Q}_{j+1}^{\vee}(\mathcal{Q}_{j+1}-\mathcal{Q}_{j})
\end{align*}
as desired. 
\end{proof}
We end this section by proving that two maps between nested Hilbert schemes forgetting the largest or the smallest subscheme may be equipped with relative perfect obstruction theories compatible with the obstruction theories above. Recall that for the dimension vector $(d_0,\dots, d_r)$ we keep track of a nesting of $r+1$ ideals 
\[I_{r+1}\subset \cdots \subset I_1\]
in $I_0=\C[x,y]$ via the flag of vectorspaces $V_\bullet$ with
\[V_i=I_i/I_{r+1}.\]
The map forgetting the first ideal simply forgets the $V_1$ part of the flag and changes the dimension vector to 
\[(d_0+d_1,d_2,\dots,d_r).\]
The map forgetting the last, i.e. $(r+1)$'th ideal takes $V_\bullet$ to the quotient flag $W_\bullet$ where 
\[W_i=V_i/V_r\cong I_i/I_r\]
and changes the dimension vector to 
\[(d_0,\dots, d_{r-1}).\]
We start by showing the case of forgetting the smallest subscheme, i.e. the first ideal.
\begin{lemma} \label{forget small subscheme}
Let $\underline{d}=(d_0,\dots, d_r)\in \Z_{\geq 0}^{r+1}$ be a dimension vector. Set 
\[\underline{d}^{(1)}=(d_0+d_1,d_2,\dots, d_r)\in \Z_{\geq 0}^{r}\]
and let
\[f=f_{\underline{d},\underline{d}^{(1)}}:\operatorname{ncNHilb}^{\underline{d}}(\A^2)\to \operatorname{nc
NHilb}^{\underline{d}^{(1)}}(\A^2) \]
be the map which is given by the non-commutative tuple $(\mathcal{V}_{\bullet}^{\hat{1}},\Phi_1,\Phi_2,\mathfrak{v})$ where 
\[\mathcal{V}_{\bullet}^{\hat{1}}=(\mathcal{V}_0\supset \mathcal{V}_2\supset\mathcal{V}_3  \cdots \supset \mathcal{V}_{r+1}=0)\]
i.e., where we simply forget the first part of the filtration. There exists a map of equivariant bundles
\[\psi:\mathcal{E}_{\underline{d}}\to f^*\mathcal{E}_{\underline{d}^{(1)}}\]
such that the pair $(f,\psi)$ satisfies the conditions of \cref{relative POT criterion}. In particular, the corresponding forgetful map 
\[\operatorname{NHilb}^{\underline{d}}(\A^2)\to \operatorname{NHilb}^{\underline{d}^{(1)}}(\A^2)\]
can be equipped with a relative perfect obstruction theory compatible with the obstruction theories defined in \cref{POT DEF}.
\end{lemma}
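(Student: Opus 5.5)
The map $f$ only coarsens the flag, so a flag-preserving homomorphism for $\mathcal{V}_\bullet$ remains flag-preserving for $\mathcal{V}^{\hat{1}}_\bullet$. This gives natural inclusions
\[\mathcal{E}nd^{\operatorname{fil}}(\mathcal{V}_\bullet)\hookrightarrow f^*\mathcal{E}nd^{\operatorname{fil}}(\mathcal{V}^{\hat 1}_\bullet),\qquad \mathcal{H}om^{\operatorname{fil}}(\mathcal{V}_\bullet,\overline{\mathcal{V}}_\bullet)\hookrightarrow f^*\mathcal{H}om^{\operatorname{fil}}(\mathcal{V}^{\hat 1}_\bullet,\overline{\mathcal{V}}^{\hat 1}_\bullet),\]
the second because $\overline{\mathcal{V}}^{\hat1}_\bullet$ is the same quotient flag with $\overline{\mathcal{V}}_1$ forgotten. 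We take $\psi$ to be the twist by $t_1^{-1}t_2^{-1}$ of the appropriate inclusion, so $\psi$ is injective rather than surjective and transversality must be checked by hand.

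\emph{Choice of $\psi$ by cases.} Note that $d_0+d_1\neq 0$ whenever $d_0\neq 0$.
\begin{enumerate}
\item If $d_0\neq 0$, both sides use $\mathcal{E}$, and $\psi$ is the inclusion above.
\item If $d_0=0$ and $d_1\neq0$, the source is $\mathcal{E}_{\operatorname{naive}}$ and the target is $\mathcal{E}$; here $\psi$ is the inclusion composed with the quotient $\mathcal{E}_{\operatorname{naive}}\to\mathcal{E}$.
\item If $d_0=d_1=0$, both sides use $\mathcal{E}_{\operatorname{naive}}$.
\end{enumerate}
In every case $\psi([\Phi_1,\Phi_2])=f^*[\Phi_1,\Phi_2]$, because $f$ keeps the same operators and cyclic vector. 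Equivariance is clear, since all twists are by $t_1^{-1}t_2^{-1}$.

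\emph{Transversality.} We must show that the map
\[f^*\mathcal{E}^\vee_{\underline{d}^{(1)}}\to f^*\Omega_{\operatorname{ncNHilb}^{\underline{d}^{(1)}}}\oplus\mathcal{E}_{\underline{d}}^\vee\]
is injective on $\operatorname{NHilb}^{\underline{d}}(\A^2)$. We dualize using the trace pairing, as in the proof of \cref{smooth-obstruction}, together with \cref{differential identification lemma}. This identifies $(\mathcal{E}^{(1)})^\vee$ with $t_1t_2\mathcal{H}om^{\operatorname{fil}}(\overline{\mathcal{V}}^{\hat1},\mathcal{V}^{\hat1})$, where filtered now means the transpose filtration condition. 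The map $ds^\vee$ then becomes
\[\phi\mapsto([\Phi_2,\phi q],[\phi q,\Phi_1],0)\]
inside $\ker(\mathcal{E}nd^{\hat1,\operatorname{fil}\,\oplus2}\oplus\mathcal{V}\to\mathcal{E}nd^{\hat1,\operatorname{fil}})$, and $\psi^\vee$ becomes the projection of $\phi q$ onto the dual of the finer filtered piece, i.e. its class modulo the annihilator.

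Suppose $\phi$ lies in the kernel. Then $\phi q$ commutes with $\Phi_1$ and $\Phi_2$ and kills $\mathfrak{v}$. By cyclicity this forces $\phi q=0$, so $\phi=0$ since $q$ is an epimorphism. In fact $ds^\vee$ alone is already injective, exactly as in \cref{smooth-obstruction}, so the $\psi^\vee$ component is not even needed. The naive case is more delicate because there $q=\id$: a commuting endomorphism is determined by its value on $\mathfrak{v}$, but that value need not vanish. This is the main obstacle. In that case we use the $\psi^\vee$ component instead. If $\psi^\vee\phi=0$, then $\phi$ pairs trivially with all $\mathcal{V}_\bullet$-filtered maps. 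If moreover $\phi$ commutes with $\Phi_1,\Phi_2$, it is left multiplication $L_w$ for some $w$. We then show that this is incompatible with vanishing against $\mathcal{E}nd^{\operatorname{fil}}(\mathcal{V}_\bullet)$ unless $w=0$, by pairing against $L_{w'}$ and identity-type elements and using nondegeneracy of the trace form on the algebra $\mathcal{V}_0$ twisted appropriately. If this argument fails, the fallback is to check injectivity fiberwise at $T$-fixed points and invoke semicontinuity together with $T$-equivariance.

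\emph{Conclusion.} Once transversality holds, \cref{relative POT criterion} immediately produces the compatible relative perfect obstruction theory on $\operatorname{NHilb}^{\underline{d}}(\A^2)\to\operatorname{NHilb}^{\underline{d}^{(1)}}(\A^2)$.
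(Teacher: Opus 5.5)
\textbf{There is a genuine gap, and it is in the case $d_0\neq 0$.} This is the only case where transversality is not automatic, and your argument for it does not work.

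Your claim that $ds^\vee$ alone is already injective, ``exactly as in \cref{smooth-obstruction}'', is false once the target is nested. Under the trace pairing, the dual of $\mathcal{E}nd^{\operatorname{fil}}(\mathcal{V}^{\hat1}_\bullet)$ is the \emph{quotient} $\mathcal{E}nd(\mathcal{V}_0)/\mathcal{H}om^{\operatorname{fil}}(\mathcal{V}^{\hat1}_\bullet,\mathcal{V}^{\hat1}_{\bullet+1})$. It is not a space of filtered maps. Likewise $\mathcal{E}_{\underline{d}^{(1)}}^\vee$ is a quotient of $t_1t_2\mathcal{H}om(\overline{\mathcal{V}}_0,\mathcal{V}_0)$. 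So $\phi\in\ker ds^\vee$ only says that $[\Phi_k,\phi q]$ is strictly filtered, not that it vanishes. The cyclicity argument of \cref{smooth-obstruction} works there only because $r=0$ and there is no filtration.

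A concrete counterexample: take $\underline{d}=(d_0,d_1,1)$ with $d_0\neq 0$ and set $n=d_0+d_1$. The target is $\operatorname{NHilb}^{(n,1)}(\A^2)=\operatorname{Hilb}^{[n,n+1]}(\A^2)$, which is smooth of dimension $2n+2$, while its virtual dimension is $2n+1$. Hence $\ker ds^\vee=h^{-1}(\mathbb{E}_{\underline{d}^{(1)}})$ is generically of rank one, and it stays nonzero after pulling back along the surjective map $f$. So the $\psi^\vee$ component is essential.

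The paper's proof uses it as follows. Since $\psi$ is the inclusion with cokernel $\mathcal{H}om(\mathcal{V}_1/\mathcal{V}_2,\overline{\mathcal{V}}_0/\overline{\mathcal{V}}_1)$, it suffices, on the framed cover, that $ds$ composed to this cokernel is surjective. Dually, take $\varphi:\overline{\mathcal{V}}_0/\overline{\mathcal{V}}_1\to\mathcal{V}_1/\mathcal{V}_2$ in the kernel. Then $[\Phi_k,\varphi]$ sends $\mathcal{V}_0$ into $\mathcal{V}_2$. On this graded piece that means exactly that $\varphi$ commutes with the induced operators as a map $\mathcal{V}_0/\mathcal{V}_1\to\mathcal{V}_1/\mathcal{V}_2$. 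It also kills $\mathfrak{v}$, and $\mathcal{V}_0/\mathcal{V}_1$ is cyclic, so $\varphi=0$. Only after projecting to this cokernel does ``strictly filtered commutator'' become honest commutation, which your argument assumes from the start.

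\textbf{For $d_0=0$ you place the difficulty where there is none.} Here $\mathcal{V}_0=\mathcal{V}_1$, so your ``inclusion'' $\mathcal{E}nd^{\operatorname{fil}}(\mathcal{V}_\bullet)\hookrightarrow\mathcal{E}nd^{\operatorname{fil}}(\mathcal{V}^{\hat1}_\bullet)$ is an equality. Thus $\psi$ is the identity when $d_1=0$, and the surjection $\mathcal{E}_{\operatorname{naive}}\twoheadrightarrow\mathcal{E}$ when $d_1\neq 0$. In both cases transversality is automatic, as in the remark after \cref{relative POT criterion}. Your statement that $\psi$ is injective rather than surjective holds only for $d_0\neq0$.

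The argument you sketch for this case would not work as stated anyway. The trace form of $\mathcal{V}_0$ is degenerate whenever the algebra is non-reduced; for example, $x$ lies in the radical of the trace form of $\C[x]/(x^2)$. The fixed-point and semicontinuity fallback is also not carried out.
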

\begin{proof}
    Suppose first that $d_0\neq 0$. Define $\psi$ as the inclusion 
    \[t^{-1}_1t^{-1}_2\mathcal{H}om^{\operatorname{fil}}(\mathcal{V}_{\bullet},\overline{\mathcal{V}}_\bullet)\hookrightarrow t^{-1}_1t^{-1}_2\mathcal{H}om^{\operatorname{fil}}(\mathcal{V}^{\hat{1}}_{\bullet},\overline{\mathcal{V}}^{\hat{1}}_\bullet).\]
    Clearly $\psi$ maps the commutator of the universal maps to itself. It remains to show the transversality condition. Using faithful flatness of the group quotient and injectivity of the map
    \[\pi^*\Omega_{\operatorname{ncNHilb}^{\underline{d}}(\A^2)}\to \Omega_{\widetilde{\operatorname{ncNHilb}^{\underline{d}}}(\A^2)} \]
    we can reduce the calculation to $\widetilde{\operatorname{ncNHilb}^{\underline{d}}}(\A^2)$. If we set $V_\bullet=N_\bullet \otimes \oo_{\widetilde{\operatorname{ncNHilb}^{\underline{d}}}(\A^2)}$ then 
    \[\pi^*\mathcal{E}_{\underline{d}}=\mathcal{H}om^{\operatorname{fil}}(V_\bullet, \overline{V}_\bullet),\]
    \[\pi^*f^*\mathcal{E}_{\underline{d}^{(1)}}=\mathcal{H}om^{\operatorname{fil}}(V^{\hat{1}}_\bullet, \overline{V}_\bullet^{\hat{1}})\]
    and
    \[f^*T_{\widetilde{\operatorname{ncNHilb}^{\underline{d}^{(1)}}}(\A^2)}=\mathcal{E}nd^{\operatorname{fil}}(V_\bullet^{\hat{1}})^{\oplus 2}\oplus V_0.\]
    By dualizing we should show that the map
    \[(\mathcal{E}nd^{\operatorname{fil}}(V_\bullet^{\hat{1}})^{\oplus 2}\oplus V_0) \oplus\mathcal{H}om^{\operatorname{fil}}(V_\bullet, \overline{V}_\bullet) \xrightarrow{(-\text{d}s_{\operatorname{com}},\phi)} \mathcal{H}om^{\operatorname{fil}}(V_\bullet^{\hat{1}}, \overline{V}_\bullet^{\hat{1}})\]
is surjective on  the commutativity locus. Equivalently, by taking the quotient by the second summand of the source which is an inclusion we should show that the map
\[\mathcal{E}nd^{\operatorname{fil}}(V_\bullet^{\hat{1}})^{\oplus 2}\oplus V_0\xrightarrow{-\text{d}s_{\operatorname{com}}}\mathcal{H}om^{\operatorname{fil}}(V_\bullet^{\hat{1}}, \overline{V}_\bullet^{\hat{1}})\big{/}\mathcal{H}om^{\operatorname{fil}}(V_\bullet, \overline{V}_\bullet)=\mathcal{H}om(V_1/V_{2},\overline{V}_{0}/\overline{V}_1)\]
is a surjection. We can remove the $V_0$ summand from the source, since $\text{d}s_{\operatorname{com}}$ is zero on it. Taking the dual and using the trace pairing we should show injectivity of 
\[\mathcal{H}om(\overline{V}_{0}/\overline{V}_{1},V_1/V_{2})\to( \mathcal{E}nd(V_\bullet^{\hat{1}})/\mathcal{H}om^{\operatorname{fil}}(V^{\hat{1}}_{\bullet},V_{\bullet+1}^{\hat{1}}))^{\oplus 2}.\]
If $\varphi:\overline{V}_{0}/\overline{V}_{1}\to V_1/V_{2}$ is in the kernel of this map then for $k=1,2$ the map 
\[[\widetilde{\Phi}_k,\varphi]\]
must take $V^{\hat{1}}_{0}=V_{0}$ to $V^{\hat{1}}_1=V_{2}$ and therefore vanish as a map $V_0/V_1\to V_1/V_2$. But this means that $\varphi$ commutes with $\widetilde{\Phi}_k$ and vanishes on the cyclic vector so that it must be 0. This finishes the proof in the $d_0\neq 0$ case. In the case where $d_0=0$ we have $\mathcal{V}_0=\mathcal{V}_1$ so the flag-preserving maps of $\mathcal{V}_\bullet$ and $\mathcal{V}_{\bullet}^{\hat{1}}$ are the same. When $d_1= 0$ we can simply take the identity map
\[\psi:t_1^{-1}t_2^{-1}\mathcal{H}om^{\operatorname{fil}}(\mathcal{V}_\bullet,\mathcal{V}_\bullet)= t_1^{-1}t_2^{-1}\mathcal{H}om^{\operatorname{fil}}(\mathcal{V}_\bullet^{\hat{1}},\mathcal{V}_\bullet^{\hat{1}})\]
while in the case where $d_1\neq 0$ we can take the quotient map
\[\psi:t_1^{-1}t_2^{-1}\mathcal{H}om^{\operatorname{fil}}(\mathcal{V}_\bullet,\mathcal{V}_\bullet)= t_1^{-1}t_2^{-1}\mathcal{H}om^{\operatorname{fil}}(\mathcal{V}_\bullet^{\hat{1}},\mathcal{V}_\bullet^{\hat{1}})\twoheadrightarrow t_1^{-1}t_2^{-1}\mathcal{H}om^{\operatorname{fil}}(\mathcal{V}_\bullet^{\hat{1}},\overline{\mathcal{V}}_\bullet^{\hat{1}}).\]
Both maps are surjective, so that the transversality requirement is automatic. 
\end{proof}
Next, we focus on the map forgetting the largest subscheme. Here, we are able to show a bit more and provide a factorization of the virtual structure sheaf.
\begin{lemma} \label{factorization-existence}
Let $\underline{d}=(d_0,\dots ,d_{r})\in \Z_{\geq 0}^{r+1}$ be a dimension vector. Set
\[\underline{\hat{d}}=(d_0,\dots ,d_{r-1})\in \Z_{\geq 0}^r\]
and let  
\[p=p_{\underline{d},\underline{\hat{d}}}:\operatorname{ncNHilb}^{\underline{d}}(\A^2)\to \operatorname{ncNHilb}^{\underline{\hat{d}}}(\A^2)\]
be the map given by the non-commutative tuple $(\mathcal{V}_\bullet/\mathcal{V}_r,\Phi_1,\Phi_2,\mathfrak{v})$. 
There exists a map of equivariant bundles 
\[\psi:\mathcal{E}_{\underline{d}}\to p^*\mathcal{E}_{\underline{\hat{d}}}\]
such that the pair $(p,\psi)$ satisfies the conditions of \cref{relative POT criterion}. In particular, the forgetful map 
\[p:\operatorname{NHilb}^{\underline{d}}(\A^2)\to \operatorname{NHilb}^{\underline{\hat{d}}}(\A^2)\]
can be equipped with a relative perfect obstruction theory compatible with the obstruction theories defined in \cref{POT DEF}. Furthermore, there exists a class $\mathcal{E}_{\underline{d}/\underline{\hat{d}}}\in K_T^0(\operatorname{NHilb}^{\underline{d}}(\A^2))$ such that 
\[\oo^{\operatorname{vir}}_{\underline{d}}=\mathcal{E}_{\underline{d}/\underline{\hat{d}}}\cdot p^*\mathcal{O}_{\underline{\hat{d}}}^{\operatorname{vir}}.\]
In particular, by the projection formula we get that
\[p_*\oo^{\operatorname{vir}}_{\underline{d}}=p_*(\mathcal{E}_{\underline{d}/\underline{\hat{d}}})\cdot \mathcal{O}_{\underline{\hat{d}}}^{\operatorname{vir}}.\]
\end{lemma}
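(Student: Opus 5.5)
Define $\psi$ as a restriction morphism that is surjective, so that the Remark following \cref{relative POT criterion} gives both transversality and the factorization. The map $p$ sends $\mathcal{V}_\bullet$ to the quotient flag $\mathcal{W}_\bullet=\mathcal{V}_\bullet/\mathcal{V}_r$. A flag-preserving map $\phi:\mathcal{V}_\bullet\to\mathcal{V}_\bullet$ preserves $\mathcal{V}_r$, so it induces $\bar\phi:\mathcal{W}_\bullet\to\mathcal{W}_\bullet$. This gives a natural map
\[\mathcal{E}nd^{\operatorname{fil}}(\mathcal{V}_\bullet)\to p^*\mathcal{E}nd^{\operatorname{fil}}(\mathcal{W}_\bullet),\]
compatible with the twist $t_1^{-1}t_2^{-1}$. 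It sends $[\Phi_1,\Phi_2]$ to the commutator of the induced operators, so $\psi(s_{\operatorname{com}})=p^*s_{\operatorname{com}}$.

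The map is surjective. Locally on $\widetilde{\operatorname{ncNHilb}^{\underline{d}}}(\A^2)$ we may split $V_\bullet=W_\bullet\oplus V_r$ compatibly with the filtration (as vector bundles, ignoring the operators). Any filtered endomorphism of $W_\bullet$ then extends by zero on $V_r$. Its kernel is $\mathcal{H}om^{\operatorname{fil}}(\mathcal{V}_\bullet,\mathcal{V}_r)=\mathcal{H}om(\mathcal{V}_0,\mathcal{V}_r)$, a bundle of rank $d\,d_r$. The rank count $h(\underline{d})-h(\underline{\hat d})=d_r(d_0+\dots+d_r)$ confirms this.

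For $\mathcal{E}=t_1^{-1}t_2^{-1}\mathcal{H}om^{\operatorname{fil}}(\mathcal{V}_\bullet,\overline{\mathcal{V}}_\bullet)$ with $d_0\neq 0$, first assume $r\geq 1$. Then $\mathcal{V}_r\cap\operatorname{im}\mathfrak{v}=0$ by the argument in the lemma on $\overline{\mathcal{V}}_\bullet$, so $\overline{\mathcal{V}}_r\cong\mathcal{V}_r$ and $\overline{\mathcal{V}}_\bullet/\overline{\mathcal{V}}_r=\overline{\mathcal{W}}_\bullet$. The analogous quotient map
\[\mathcal{H}om^{\operatorname{fil}}(\mathcal{V}_\bullet,\overline{\mathcal{V}}_\bullet)\to p^*\mathcal{H}om^{\operatorname{fil}}(\mathcal{W}_\bullet,\overline{\mathcal{W}}_\bullet)\]
is well defined, because a filtered map sends $\mathcal{V}_r$ into $\overline{\mathcal{V}}_r$. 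It is surjective by the same local splitting, with kernel $K=t_1^{-1}t_2^{-1}\mathcal{H}om(\mathcal{V}_0,\mathcal{V}_r)$.

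The case conventions of \cref{POT DEF} need matching. For $r\geq 1$, $\underline{d}$ and $\underline{\hat d}$ share $d_0$, so both sides use the same type of bundle. When $d_0=0$ both use $\mathcal{E}_{\operatorname{naive}}$ and the previous argument applies verbatim. The degenerate case $r=0$ is trivial, with $\underline{\hat d}$ giving a point. Note also that if $d_r=0$ the map $p$ is an isomorphism and $K=0$.

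Once surjectivity is established, transversality in \cref{relative POT criterion} is automatic, which yields the compatible relative obstruction theory. The Remark then gives
\[\mathcal{O}^{\operatorname{vir}}_{\underline{d}}=\wedge^\bullet_{-1}\big[K^\vee|_{\operatorname{NHilb}^{\underline{d}}}\big]\cdot p^*\mathcal{O}^{\operatorname{vir}}_{\underline{\hat d}}.\]
So we set
\[\mathcal{E}_{\underline{d}/\underline{\hat d}}=\wedge^\bullet_{-1}\big(t_1t_2\,\mathcal{V}_r^\vee\otimes\mathcal{V}_0\big)\big|_{\operatorname{NHilb}^{\underline{d}}(\A^2)}.\]
The pushforward identity is the projection formula for the $T$-equivariant (localized) pushforward. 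Properness of $p$ is not needed, since $p$ is proper on fixed loci and localized pushforward suffices.

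The main point to check carefully is that $\psi$ really intertwines the sections and is well defined on the quotient bundles $\overline{\mathcal{V}}$. This amounts to checking that the image of $\mathfrak{v}$ in $\mathcal{W}_0$ is the cyclic vector of $p$'s tuple, which holds by the definition of $p$.
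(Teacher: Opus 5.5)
Your proposal is correct and follows the paper's own proof. In both, $\psi$ is the quotient map $\mathcal{E}_{\underline{d}}\to p^*\mathcal{E}_{\underline{\hat d}}$ obtained by passing to $\mathcal{V}_\bullet/\mathcal{V}_r$, its surjectivity makes transversality automatic, and the factorization comes from multiplicativity of $\wedge_{-1}^{\bullet}$ applied to $\mathcal{F}=\ker\psi$; you additionally supply details the paper leaves implicit (the local-splitting proof of surjectivity, the identification $\ker\psi\cong t_1^{-1}t_2^{-1}\mathcal{H}om(\mathcal{V}_0,\mathcal{V}_r)$ for $r\geq 1$ with its rank check, the matching of the $d_0=0$ and $d_0\neq 0$ conventions of \cref{POT DEF}, and the observation that $p$ is not proper, so the pushforward must be taken in localized equivariant $K$-theory).
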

\begin{proof}
We simply define $\psi$ as the quotient map
\[\psi:\mathcal{E}_{\underline{d}}=t_1^{-1}t_2^{-1}\mathcal{H}om^{\operatorname{fil}}(\mathcal
{V}_\bullet,\overline{\mathcal{V}}_{\bullet})\to t_1^{-1}t_2^{-1}\mathcal{H}om^{\operatorname{fil}}(\mathcal
{V}_\bullet/\mathcal{V}_r,\overline{\mathcal{V}}_{\bullet}/\overline{\mathcal{V}}_r)=p^*\mathcal{E}_{\underline{\hat{d}}}\]
which clearly takes the commutator of the universal operators to itself. Surjectivity of $\psi$ automatically implies the transversality requirement, so that the pair $(p,\psi)$ satisfies the conditions of \cref{relative POT criterion}. If we let $\mathcal{F}=\operatorname{ker}(\psi)$ we have 
\[\mathcal{E}_{\underline{d}}=\mathcal{F}+p^*\mathcal{E}_{\underline{\hat{d}}}\in K_T^0(\operatorname{ncNHilb}^{\underline{d}}(\A^2))\]
and so
\[\wedge_{-1}^{\bullet}(\mathcal{E}^{\vee}_{\underline{d}})=\wedge_{-1}^{\bullet}(\mathcal{F}^{\vee})\cdot p^*(\wedge_{-1}^{\bullet}(\mathcal{E}^{\vee}_{\underline{\hat{d}}}))\]
hence $\mathcal{E}_{\underline{d}/\underline{\hat{d}}}:=\wedge_{-1}^{\bullet}(\mathcal{F}^{\vee})|_{\operatorname{NHilb}^{\underline{d}}(\A^2)}$ has the desired property.
\end{proof}

\begin{remark}
In the case where we forget the smallest subscheme we get a similar but slightly different factorization of the virtual structure sheaf. Let $f,\psi$ be as in \cref{forget small subscheme}. When $d_0=d_1=0$ we see that $\psi$ is an isomorphism, so that we have
\[\mathcal{O}_{\underline{d}}^{\operatorname{vir}}=f^*\oo_{\underline{d}^{(1)}}^{\operatorname{vir}}.\]
When $d_0=0$ and $d_1\neq0$ we see that $\psi$ is a surjection so by setting $\mathcal{E}_{\underline{d}/\underline{d}^{(1)}}=\wedge^{\bullet}_{-1}(\operatorname{ker}\psi)^{\vee}$ we get
\[\mathcal{O}_{\underline{d}}^{\operatorname{vir}}=\mathcal{E}_{\underline{d}/\underline{d}^{(1)}}\cdot f^*\oo_{\underline{d}^{(1)}}^{\operatorname{vir}}.\]
In the case where $d_0\neq 0$ we see that the map $\psi$ is injective so by setting $\mathcal{K}_{\underline{d}/\underline{d}^{(1)}}=\wedge^{\bullet}_{-1}(\operatorname{coker}\psi)^{\vee}$ we get
\[\mathcal{K}_{\underline{d}/\underline{d}^{(1)}}\cdot \mathcal{O}_{\underline{d}}^{\operatorname{vir}}=f^*\oo_{\underline{d}^{(1)}}^{\operatorname{vir}}.\]
\end{remark}
\section{Localization calculations}
\subsection{Localization of the factorization}In this section we directly compute $p_*(\mathcal{E}_{\underline{d}/\underline{\hat{d}}})$ via localization. Note that if we can prove that
\[p_*(\mathcal{E}_{\underline{d}/\underline{\hat{d}}})=q^*\chi(\operatorname{Hilb}^{d_r}(\A^2),\wedge_{-1}^{\bullet}(\omega_{\A^2}^{[d_r]}))\]
then this will immediately imply \cref{Main theorem}. Recall that we proved in \cref{GSY comparison} that 
\[T^{\operatorname{vir}}_{\underline{d}}=\mathbb{E}_{\underline{d}}^{\vee}=\mathcal{Q}_{r+1}+\frac{\mathcal{Q}_1^\vee}{t_1t_2}-\frac{(1-t_1)(1-t_2)}{t_1t_2}\sum_{j=0}^{r}\mathcal{Q}_{j+1}^{\vee}(\mathcal{Q}_{j+1}-\mathcal{Q}_{j})\]
where $\mathcal{Q}_i$ is the tautological bundle of the $i$'th subscheme. In particular, if we for a nested Young diagram $\lambda_\bullet$ set
\[Q_{\lambda_i}=\mathcal{O}_{\C[x,y]/I_{\lambda_i}}\in K_T^0(\operatorname{pt})\]
we get that
\[T_{\underline{d},\lambda_\bullet}^{\operatorname{vir},\vee}=\operatorname{Q}_{\lambda_{r+1}}^{\vee}+t_1t_2Q_{\lambda_1}-(1-t_1)(1-t_2)\sum_{j=0}^rQ_{\lambda_{j+1}}(Q_{\lambda_{j+1}}^{\vee}-Q_{\lambda_{j}}^{\vee}).\]
\begin{lemma} \label{correction}
    Let $\underline{d}=(d_0,\dots ,d_{r})\in \Z_{\geq 0}^r$ be a dimension vector and let $\mathcal{E}_{\underline{d}/\underline{\hat{d}}}\in K_T^0(\operatorname{NHilb}^{\underline{d}}(\A^2))$ be the class of \cref{factorization-existence}. Let $\lambda_{\bullet}$ be a nested partition of type $\underline{d}$ and let 
    \[\operatorname{cont}_{\lambda_\bullet}(\mathcal{E}_{\underline{d}/\underline{\hat{d}}})\in K_T^0(\operatorname{pt})\]
    be the contribution of the Thomason localization of $\mathcal{E}_{\underline{d}/\underline{\hat{d}}}$ at the fixed point corresponding to $\lambda_\bullet$. Then
    \[\operatorname{cont}_{\lambda_\bullet}(\mathcal{E}_{\underline{d}/\underline{\hat{d}}})=\frac{1}{\wedge_{-1}^{\bullet}(\mathcal{C}_{\lambda_{r},\lambda _{r+1}})}\]
    where 
    \[\mathcal{C}_{\lambda_{r},\lambda _{r+1}}=(Q_{\lambda_{r+1}}^{\vee}-Q_{\lambda_{r}}^{\vee})(1-(1-t_1)(1-t_2)Q_{\lambda_{r+1}}).\]
    In particular, the contribution depends only on $\lambda_{r}\subset\lambda_{r+1}$.
\end{lemma}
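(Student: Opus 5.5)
The plan is to reduce the claim to a $K$-theoretic comparison of the virtual tangent bundles at the two levels, using the formula for $T^{\operatorname{vir}}$ derived in \cref{GSY comparison}. I read $\operatorname{cont}_{\lambda_\bullet}(\mathcal{E}_{\underline{d}/\underline{\hat{d}}})$ as the localized factor making the factorization of \cref{factorization-existence} hold fixed point by fixed point. That is, it is the class satisfying
\[\operatorname{cont}_{\lambda_\bullet}(\oo^{\operatorname{vir}}_{\underline{d}})=\operatorname{cont}_{\lambda_\bullet}(\mathcal{E}_{\underline{d}/\underline{\hat{d}}})\cdot \operatorname{cont}_{\hat{\lambda}_\bullet}(\oo^{\operatorname{vir}}_{\underline{\hat{d}}}),\qquad \hat{\lambda}_\bullet=(\lambda_0\subset\cdots\subset\lambda_r)=p(\lambda_\bullet).\]
Concretely, $\mathcal{E}_{\underline{d}/\underline{\hat{d}}}=\wedge^\bullet_{-1}(\mathcal{F}^\vee)$ with $\mathcal{F}=\mathcal{E}_{\underline{d}}-p^*\mathcal{E}_{\underline{\hat{d}}}$. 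Everything is computed on the smooth ambient spaces $\operatorname{ncNHilb}$ and then restricted.

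First I would write out both virtual structure sheaves via the Koszul description of \cref{simple zero locus}, with $Y=\operatorname{ncNHilb}^{\underline{d}}(\A^2)$, and apply Thomason localization on the smooth $T$-space $Y$. This expresses $\oo^{\operatorname{vir}}_{\underline{d}}$ as $\iota_*\sum_{\lambda_\bullet}\wedge^\bullet_{-1}(\mathcal{E}^\vee_{\underline{d},\lambda_\bullet})/\wedge^\bullet_{-1}(T^\vee_{Y,\lambda_\bullet})$, and similarly one level down. Taking the ratio, the contribution of $\mathcal{E}_{\underline{d}/\underline{\hat{d}}}$ at $\lambda_\bullet$ becomes
\[\frac{\wedge^\bullet_{-1}(\mathcal{F}^\vee_{\lambda_\bullet})}{\wedge^\bullet_{-1}\big((T_{Y,\lambda_\bullet}-p^*T_{\hat{Y},\hat{\lambda}_\bullet})^\vee\big)}=\frac{1}{\wedge^\bullet_{-1}\big((T^{\operatorname{vir}}_{\underline{d}}-p^*T^{\operatorname{vir}}_{\underline{\hat{d}}})^\vee_{\lambda_\bullet}\big)}.\]
Here $\wedge^\bullet_{-1}$ is multiplicative on $K$-theory classes. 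I use that $T^{\operatorname{vir}}=T_Y|-\mathcal{E}|$ in both cases, including the $d_0=0$ convention $\mathcal{E}_{\operatorname{naive}}$. The rewriting of \cref{GSY comparison} shows this convention gives the same uniform formula.

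Second, I compute the difference explicitly. Pulling back along $p$, the tautological bundles $\mathcal{Q}_1,\dots,\mathcal{Q}_r$ of $\operatorname{NHilb}^{\underline{\hat{d}}}$ become the corresponding $\mathcal{Q}_i$ on $\operatorname{NHilb}^{\underline{d}}$, since $p$ only forgets $Z_{r+1}$. In the formula
\[T^{\operatorname{vir}}_{\underline{d}}=\mathcal{Q}_{r+1}+\frac{\mathcal{Q}_1^\vee}{t_1t_2}-\frac{(1-t_1)(1-t_2)}{t_1t_2}\sum_{j=0}^{r}\mathcal{Q}_{j+1}^{\vee}(\mathcal{Q}_{j+1}-\mathcal{Q}_{j}),\]
the $\mathcal{Q}_1^\vee$ term and all summands with $j<r$ then cancel. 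This leaves
\[T^{\operatorname{vir}}_{\underline{d}}-p^*T^{\operatorname{vir}}_{\underline{\hat{d}}}=(\mathcal{Q}_{r+1}-\mathcal{Q}_r)-\frac{(1-t_1)(1-t_2)}{t_1t_2}\mathcal{Q}^\vee_{r+1}(\mathcal{Q}_{r+1}-\mathcal{Q}_r).\]
The degenerate case $r=0$ is covered by the convention $\mathcal{Q}_0=0$. Dualizing, with $\overline{(1-t_1)(1-t_2)/t_1t_2}=(1-t_1)(1-t_2)$, and specializing to the fixed point yields exactly $(Q^\vee_{\lambda_{r+1}}-Q^\vee_{\lambda_r})(1-(1-t_1)(1-t_2)Q_{\lambda_{r+1}})=\mathcal{C}_{\lambda_r,\lambda_{r+1}}$. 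Its dependence on $\lambda_r\subset\lambda_{r+1}$ alone is then visible.

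The main obstacle is making the first step honest. Thomason localization of the Koszul class on $Y$ requires the fixed locus of $Y$, not just of $\operatorname{NHilb}$, to be isolated. It also requires the quotient to make sense when $T^{\operatorname{vir}}_{\lambda_\bullet}$ or $\mathcal{C}$ has trivial weights; at such points the virtual contribution is $0$ by the earlier example. I would first try to sidestep both issues by working directly with the virtual localization formula of \cite{Qu2018-uo} on $\operatorname{NHilb}^{\underline{d}}(\A^2)$, whose fixed points are reduced and isolated. I would define the contribution as the above ratio in $K_T^0(\operatorname{pt})_{\operatorname{loc}}$. Then I would check that it is compatible with the factorization of \cref{factorization-existence}, using that $\mathcal{F}|_{\lambda_\bullet}$ is a genuine representation, so $\wedge^\bullet_{-1}\mathcal{F}^\vee_{\lambda_\bullet}$ is an honest Laurent polynomial. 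The remaining factor is the ambient tangent difference $T_Y-p^*T_{\hat{Y}}$, whose weights at $\lambda_\bullet$ are nonzero because $\operatorname{ncNHilb}^{\underline{d}}$ has isolated $T$-fixed points. These are the monomial flags, and I would verify this via the Atiyah-sequence description of $T_Y$ in \cref{differential identification lemma}.
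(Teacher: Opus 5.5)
Your computation of the difference of virtual tangent bundles is exactly the paper's. The paper takes $\operatorname{cont}_{\lambda_\bullet}(\mathcal{E}_{\underline{d}/\underline{\hat{d}}})=1/\wedge^{\bullet}_{-1}\big(T^{\operatorname{vir},\vee}_{\underline{d},\lambda_\bullet}-p^*T^{\operatorname{vir},\vee}_{\underline{\hat{d}},\lambda_\bullet}\big)$ as the definition, i.e.\ the ratio of the virtual localization contributions on $\operatorname{NHilb}^{\underline{d}}(\A^2)$ and $\operatorname{NHilb}^{\underline{\hat{d}}}(\A^2)$, and then telescopes the formula of \cref{GSY comparison} as you do.

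What fails is your justification of the first step through $Y=\operatorname{ncNHilb}^{\underline{d}}(\A^2)$, which assumes $Y$ has isolated $T$-fixed points. That is false. Already in $\operatorname{ncHilb}^4(\A^2)$, take $V=\langle v,A_1v,A_2v,w\rangle$ graded in degrees $0,e_1,e_2,e_1+e_2$, with $A_1A_2v=aw$, $A_2A_1v=bw$ and all other products of basis vectors zero. These tuples are $T$-fixed, cyclic iff $(a,b)\neq(0,0)$, and pairwise non-isomorphic for distinct $(a:b)$. They form a $\mathbb{P}^1$ of fixed points through the monomial point $(x^2,y^2)$ at $a=b$, and correspondingly $T_Y=\mathcal{V}_0+(t_1^{-1}+t_2^{-1}-1)\mathcal{E}nd^{\operatorname{fil}}(\mathcal{V}_\bullet)$ has a trivial weight there. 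This persists in a genuinely nested case: for $\underline{d}=(1,3)$ with $\lambda_1$ the single box and $\lambda_2$ the $2\times 2$ square, both $T_Y-p^*T_{\hat{Y}}$ and $\mathcal{F}$ have a trivial weight at $\lambda_\bullet$. So your ratio $\wedge^{\bullet}_{-1}(\mathcal{F}^\vee)/\wedge^{\bullet}_{-1}\big((T_Y-p^*T_{\hat{Y}})^\vee\big)$ is $0/0$ there. Thomason localization on $Y$ also does not reduce to the point contributions $\wedge^{\bullet}_{-1}(\mathcal{E}^\vee_{\lambda_\bullet})/\wedge^{\bullet}_{-1}(T^\vee_{Y,\lambda_\bullet})$ you wrote.

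The repair is to drop the ambient detour and adopt your ``sidestep'' as the definition, as the paper does. Use only Qu's virtual localization on the nested Hilbert schemes, whose fixed points are isolated and reduced, so only virtual tangent spaces enter. Then check that $\mathcal{C}_{\lambda_r,\lambda_{r+1}}$ has no trivial weight. Its trivial part is $\sum_{z\in\lambda_{r+1}\setminus\lambda_r}(\delta_{z,1}-c_z)$, where $c_z$ is the coefficient of $z$ in $(1-t_1)(1-t_2)Q_{\lambda_{r+1}}$, and $c_z=\delta_{z,1}$ for every box $z$ of a Young diagram. Hence $1/\wedge^{\bullet}_{-1}(\mathcal{C}_{\lambda_r,\lambda_{r+1}})$ is well defined, and this fixed-point ratio is all the proof of \cref{Main theorem} uses. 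If you insist on the ambient route, you must localize onto the positive-dimensional fixed components of $Y$. There you cancel $T_Y^{\operatorname{fix}}$ against $\mathcal{E}^{\operatorname{fix}}$ using transversality of the fixed part of $ds_{\operatorname{com}}$, not isolatedness.

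Finally, your remark that $r=0$ is covered by $\mathcal{Q}_0=0$ is wrong. For $r=0$ the target is a point with $T^{\operatorname{vir}}=0$, so the $\mathcal{Q}_1^\vee/t_1t_2$ term does not cancel. The difference is then $T_{\operatorname{Hilb}^{d_0}(\A^2)}$, not $\mathcal{C}_{\varnothing,\lambda_1}$, so the lemma, like \cref{Main theorem}, needs $r\geq 1$.
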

\begin{proof}
By definition we see that 
\[\operatorname{cont}_{\lambda_\bullet}(\mathcal{E}_{\underline{d}/\underline{\hat{d}}})=\frac{1}{\wedge^{\bullet}_{-1}(T_{\underline{d},\lambda_\bullet}^{\operatorname{vir},\vee}-p^*T_{\underline{\hat{d}},\lambda_\bullet}^{\operatorname{vir},\vee})}.\]
We then directly see that
\begin{align*}T_{\underline{d},\lambda_\bullet}^{\operatorname{vir},\vee}&=\operatorname{Q}_{\lambda_{r+1}}^{\vee}+t_1t_2Q_{\lambda_1}-(1-t_1)(1-t_2)\sum_{j=0}^rQ_{\lambda_{j+1}}(Q_{\lambda_{j+1}}^{\vee}-Q_{\lambda_{j}}^{\vee})\\
&=\sum_{i=0}^r(Q_{\lambda_{j+1}}^{\vee}-Q_{\lambda_{j}}^{\vee})+t_1t_2Q_{\lambda_{1}}-(1-t_1)(1-t_2)\sum_{j=0}^rQ_{\lambda_{j+1}}(Q_{\lambda_{j+1}}^{\vee}-Q_{\lambda_{j}}^{\vee})\\
&=Q_{\lambda_{1}}^{\vee}+t_1t_2Q_{\lambda_{1}}-(1-t_1)(1-t_2)Q_{\lambda_{1}}^{\vee}Q_{\lambda_{1}}+\sum_{i=1}^r(Q_{\lambda_{j+1}}^{\vee}-Q_{\lambda_{j-1}}^{\vee})(1-(1-t_1)(1-t_2)Q_{\lambda_{j+1}}).
\end{align*}
In particular, we see that
\[T_{\underline{d},\lambda_\bullet}^{\operatorname{vir},\vee}-p^*T_{\underline{\hat{d}},\lambda_\bullet}^{\operatorname{vir},\vee}=(Q_{\lambda_{r+1}}^{\vee}-Q_{\lambda_{r}}^{\vee})(1-(1-t_1)(1-t_2)Q_{\lambda_{r+1}})\]
as desired.
\end{proof}
With this in mind, we make the following definition.
\begin{definition}
    For any nesting of Young diagrams $\lambda\subset \mu$, we define the expression
    \[P_{\mu/\lambda}=\frac{1}{\wedge^{\bullet}_{-1}(\mathcal{C}_{\lambda,\mu})}\]
    and for fixed $\lambda$ we let 
    \[F_{\lambda}(u)=\sum_{\lambda\subset \mu}u^{|\mu|-|\lambda|}P_{\mu/\lambda}\in \Q(t_1,t_2)[[u]].\]
\end{definition}
\begin{remark}
        We note that for a nested partition $\lambda_{\bullet}=\lambda_1\subset \cdots \subset \lambda_{r}$ of type $\underline{\hat{d}}$ and
        \[p:\operatorname{NHilb}^{\underline{d}}(\A^2)\to \operatorname{NHilb}^{\underline{\hat{d}}}(\A^2)\]
        we get
    \[\operatorname{cont}_{\lambda_{\bullet}}(p_*(\mathcal{E}_{\underline{d}/\underline{\hat{d}}}))=\sum_{\underset{|\mu|-|\lambda_{r}|=d_r}{\lambda_{r}\subset \mu}}P_{\mu/\lambda_{r}}=[u^{d_r}]F_{\lambda_{r}}(u).\]
    We also note that for a Young diagram $\mu$ with $|\mu|=d$ we have
    \[P_{\mu/\varnothing}=\operatorname{cont}_{\mu}(\wedge_{-1}^{\bullet}\omega_{\A^2}^{[d]})\]
    so that
    \[F_{\varnothing}(u)=\sum_{d\geq 0}u^d\chi_{T}(\operatorname{NHilb}^{d}(\A^2),\wedge_{-1}^\bullet \left(\omega_{\A^2}^{[d]}\right))=\operatorname{PE}\left(\frac{u(1-t_1t_2)}{(1-t_1)(1-t_2)}\right).\]
    We conclude that \cref{Main theorem} follows directly if we can prove that
    \[F_{\lambda}(u)=F_{\varnothing}(u).\]
\end{remark}
\subsection{Independence of the generating series}
We will now prove that $F_{\lambda}(u)$ is independent of $\lambda$ finishing the proof \cref{Main theorem}. We start by introducing some notation to ease the later calculations.
\begin{set}
    For any subset $A\subset \Z^2$ we write $x,y,z, v,w,\dots\in A$ when we really mean 
    \[x,y,z,v,w,\dots=t_1^{a_1}t_2^{a_2}\in \Z[t_1^{\pm},t_2^{\pm}]\] 
    for some $(a_1,a_2)\in A$. In this notation it follows that we have
    \[\mathcal{C}_{\lambda,\mu}=\sum_{z\in \mu\backslash\lambda}z^{-1}\left(1-(1-t_1)(1-t_2)\sum_{v\in \mu}v\right)\]
    for a nesting $\lambda\subset \mu$ of Young diagrams. For a Young diagram $\lambda$ and boxes $x\in \lambda$ and $y\in \Z^2\backslash\lambda$ we will say that $x$ is a removable box if $\lambda \backslash \{x\}$ is still a Young diagram and we will say that $y$ is an addable box if $\lambda \cup \{y\}$ is still a partition. We denote by $R(\lambda)$, $A(\lambda)$ the set of removable respectively addable boxes of $\lambda$. Note that
    \[|R(\lambda)|+1=|A(\lambda)|\]
    as can easily be proven by induction on the number of boxes of $|\lambda|$.
    We will typically abuse notation by identifying the elements of these sets by their algebraic elements above. For example, for the Young diagram $\lambda$ of the form
    \begin{center}
     \begin{ytableau}
\none & & \none \\
\none &  &  \\
\none & & \\
\end{ytableau}
    \end{center}
we would write 
\begin{align*}
    R(\lambda)=\{t_1t_2,t_2^2\}, &&A(\lambda)=\{t_1^2,t_1t_2^2,t_2^3\}.
\end{align*}
We will further define $\mathfrak{t}=t_1t_2$ and 
\[C=\frac{(1-\mathfrak{t})}{(1-t_1)(1-t_2)}=\chi_T(\A^2,\wedge_{-1}^{\bullet}(\omega_{\A^2}))\]
and 
\[\zeta(x)=\frac{(1-x)(1-\mathfrak{t}x)}{(1-t_1x)(1-t_2x)}\]
since these expressions will appear a lot in our calculations below.
\end{set}
Note that for a nesting of Young diagrams $\lambda \subset \mu$  we directly see that
\[P_{\mu/\lambda}=\frac{1}{\wedge^{\bullet}_{-1}(\mathcal{C}_{\lambda,\mu})}=\prod_{z\in \mu\backslash\lambda}\frac{1}{(1-z^{-1})}\prod_{v\in \mu}\frac{(1-\frac{v}{z})(1-\mathfrak{t}\frac{v}{z})}{(1-t_1\frac{v}{z})(1-t_2\frac{v}{z})}\]
that is 
\[\prod_{z\in \mu\backslash\lambda }\frac{1}{(1-z^{-1})}\prod_{v\in \mu}\zeta\left(\frac{v}{z}\right).\]
There is a lot of cancellation happening in this expression as the following lemma shows.

\begin{lemma}
For any Young diagram $\mu$ we have 
    \[\frac{1}{(1-z^{-1})}\prod_{v\in \mu}\frac{(1-\frac{v}{z})(1-t_1t_2\frac{v}{z})}{(1-t_1\frac{v}{z})(1-t_2\frac{v}{z})}=\frac{ \prod_{x\in R(\mu)}(1-t_1t_2\frac{x}{z})}{ \prod_{y\in A(\mu)}(1-\frac{y}{z})}\]
where the identity is of meromorphic functions in the variable $z$.
\end{lemma}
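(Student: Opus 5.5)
The plan is to substitute $w=1/z$ and factor the product over $\mu$ row by row, so that each row telescopes and only the boundary boxes of $\mu$ survive. Write each box as $v=t_1^at_2^b$ and note the factorization
\[\zeta(vw)=\frac{(1-vw)(1-t_1t_2vw)}{(1-t_1vw)(1-t_2vw)}=\frac{1-vw}{1-t_1vw}\cdot\frac{1-t_1t_2vw}{1-t_2vw}.\]
Along a row (fixed $b$, $a=0,\dots,\mu_b-1$, where $\mu_b$ is the length of the row at height $b$) the first factor telescopes to $\frac{1-t_2^bw}{1-t_1^{\mu_b}t_2^bw}$, and the second factor telescopes to $\frac{1-t_1^{\mu_b}t_2^{b+1}w}{1-t_2^{b+1}w}$. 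Both products are finite and the identity is one of rational functions in $z$, so this is purely formal.

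Multiplying over all rows $b=0,\dots,\ell-1$ (with $\ell$ the number of nonzero rows), the factors $\frac{1-t_2^bw}{1-t_2^{b+1}w}$ telescope in $b$ to $\frac{1-w}{1-t_2^{\ell}w}$. The prefactor $\frac{1}{1-w}$ cancels the numerator $1-w$. What remains is
\[\frac{1}{1-t_2^{\ell}w}\prod_{b=0}^{\ell-1}\frac{1-t_1^{\mu_b}t_2^{b+1}w}{1-t_1^{\mu_b}t_2^{b}w}.\]
The denominator now consists of the boxes $t_1^{\mu_b}t_2^b$ for $b<\ell$ together with $t_2^\ell$, i.e. the box at the end of every row plus the first box above the top row. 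The numerator consists of the boxes $t_1^{\mu_b}t_2^{b+1}$, which equal $t_1t_2\cdot t_1^{\mu_b-1}t_2^b$, i.e. $\mathfrak{t}$ times the last box of row $b$.

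The remaining step is combinatorial cancellation, which I expect to be the only real obstacle. Whenever $\mu_{b+1}=\mu_b$, the numerator factor for row $b$ equals the denominator factor for row $b+1$, namely $t_1^{\mu_b}t_2^{b+1}$, so the two cancel. The same happens at $b=\ell-1$ with the extra denominator $t_2^\ell$ exactly when $\mu_{\ell-1}=0$, which never occurs; so at the top the corresponding terms survive. After all such cancellations:

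\begin{itemize}
\item the surviving denominator boxes $t_1^{\mu_b}t_2^b$ are those with $\mu_{b-1}>\mu_b$ (or $b=0$), together with $t_2^\ell$, and these are precisely the addable boxes $A(\mu)$;
\item the surviving numerator boxes are $\mathfrak{t}\,t_1^{\mu_b-1}t_2^b$ with $\mu_{b+1}<\mu_b$, and $t_1^{\mu_b-1}t_2^b$ is then precisely a removable box, giving $\prod_{x\in R(\mu)}(1-\mathfrak{t}xw)$.
\end{itemize}

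Checking these two characterizations against the definitions of $R(\mu)$ and $A(\mu)$, including the edge cases $b=0$ and the top row and the empty diagram where both sides equal $\frac{1}{1-w}$, completes the proof. As a consistency check, the count $|A|=|R|+1$ matches the final expression.
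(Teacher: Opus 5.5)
Your proof is correct, but it takes a different route from the paper's.

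\textbf{The paper's route.} The paper argues by induction on $|\mu|$. It starts from $\mu=\varnothing$, where both sides equal $\frac{1}{1-z^{-1}}$. For each addable box $y\in A(\mu)$ it multiplies both sides by the single factor $\zeta(y/z)$. It then checks that this factor converts $R(\mu),A(\mu)$ into $R(\mu\cup\{y\}),A(\mu\cup\{y\})$. This requires four cases, according to whether the neighbours $y/t_1$ and $y/t_2$ of $y$ were removable in $\mu$, with boxes on the axes folded into those cases.

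\textbf{Your route.} You compute the whole product in closed form instead. You split $\zeta(v/z)$ as $\frac{1-v/z}{1-t_1v/z}\cdot\frac{1-t_1t_2v/z}{1-t_2v/z}$ and telescope each ratio along a row. You then telescope the row-start factors in the height $b$, so the prefactor $\frac{1}{1-z^{-1}}$ cancels. Finally you read off the survivors from the row-length conditions: $b=0$ or $\mu_{b-1}>\mu_b$ for addable boxes, plus $t_2^\ell$ at the top, and $\mu_{b+1}<\mu_b$ for removable boxes.

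\textbf{What each approach buys.} Your argument is non-inductive and treats the boundary situations uniformly through the convention $\mu_\ell=0$. It also makes visible why only boundary boxes survive, namely adjacent rows of equal length cancelling. The paper's inductive step is exactly the branching rule $\omega_{\mu\cup\{y\}}(x)=\zeta(y/x)\,\omega_\mu(x)$. The paper reuses that rule in the later residue computations for $D_{\lambda,n}(x)$, so its local case analysis doubles as preparation for those arguments.
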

\begin{proof}
    We proceed by induction on $|\mu|$. For $\mu=\varnothing$ the left hand side is 
    \[\frac{1}{(1-z^{-1})}.\]
    We have $R(\varnothing)=\varnothing$ while $A(\varnothing)=\{1\}$ so that the right hand side is
    \[\frac{1}{(1-\frac{1}{z})}\]
    as desired. Suppose the statement is true for $\mu$ and let $w\in A(\mu)$ with $\mu'=\mu\cup \{w\}$. The induction hypothesis now immediately implies that
     \[\frac{1}{(1-z^{-1})}\prod_{v\in \mu'}\frac{(1-\frac{v}{z})(1-t_1t_2\frac{v}{z})}{(1-t_1\frac{v}{z})(1-t_2\frac{v}{z})}=\frac{(1-\frac{w}{z})(1-t_1t_2\frac{w}{z})}{(1-t_1\frac{w}{z})(1-t_2\frac{w}{z})}\frac{ \prod_{x\in R(\mu)}(1-t_1t_2\frac{x}{z})}{ \prod_{y\in A(\mu)}(1-\frac{y}{z})}.\]
    We consider 4 different cases for the placement of the box $w$
    
    \begin{center}
            \begin{tabular}{cc}
        \begin{tabular}{c}
     \begin{ytableau}
         w_1 & w \\
         \, &  w_2
     \end{ytableau}
      \\ 
      Case a)
\end{tabular} & \begin{tabular}{c}
     \begin{ytableau}
        \, & \none \\
         w_1 & w \\
         \, &  w_2
     \end{ytableau}
      \\ 
      Case b)
\end{tabular} \\
        \begin{tabular}{c}
     \begin{ytableau}

         w_1 & w \\
         \, & w_2 & 
     \end{ytableau}
      \\ 
      Case c)
\end{tabular} & \begin{tabular}{c}
     \begin{ytableau}
       \, & \none \\
        w_1 & w \\
         \, & w_2 & 
     \end{ytableau}
      \\ 
      Case d)
\end{tabular}
    \end{tabular}
    \end{center}
where we denote the neighbors of $w$ by $w_1$ and $w_2$. Note that $t_1w_1=w$ and $t_2w_2=w$. Note that case b), d) respectively case c), d) also includes the case where $w$ is on the $y$- respectively $x$-axis.
\begin{enumerate}[label=\alph*)]
    \item Here we have
    \begin{align*}
        R(\mu')=(R(\mu)\backslash\{w_1,w_2\})\cup \{w\}, && A(\mu')=A(\mu)\backslash \{w\}
    \end{align*}
    so that
    \begin{align*}
        \frac{ \prod_{x\in R(\mu)}(1-t_1t_2\frac{x}{z})}{ \prod_{y\in A(\mu)}(1-\frac{y}{z})}&=\frac{(1-t_1t_2\frac{w_1}{z})(1-t_1t_2\frac{w_2}{z})(1-t_1t_2\frac{w}{z})^{-1}  \prod_{x\in R(\mu')}(1-t_1t_2\frac{x}{z})}{ (1-\frac{w}{z})\prod_{y\in A(\mu')}(1-\frac{y}{z})}\\
        &=\frac{(1-t_2\frac{w}{z})(1-t_1\frac{w}{z})  \prod_{x\in R(\mu')}(1-t_1t_2\frac{x}{z})}{ (1-\frac{w}{z})(1-t_1t_2\frac{w}{z})\prod_{y\in A(\mu')}(1-\frac{y}{z})}
    \end{align*}
    which is what we want.
    \item Here we have
    \begin{align*}
        R(\mu')=(R(\mu)\backslash\{w_2\})\cup \{w\}, && A(\mu')=(A(\mu)\backslash \{w\})\cup\{t_2w\}
    \end{align*}
    so that
    \begin{align*}
        \frac{ \prod_{x\in R(\mu)}(1-t_1t_2\frac{x}{z})}{ \prod_{y\in A(\mu)}(1-\frac{y}{z})}&=\frac{(1-t_1t_2\frac{w_2}{z})(1-t_1t_2\frac{w}{z})^{-1}  \prod_{x\in R(\mu')}(1-t_1t_2\frac{x}{z})}{ (1-\frac{w}{z})(1-t_2\frac{w}{z})^{-1}\prod_{y\in A(\mu')}(1-\frac{y}{z})}\\
        &=\frac{(1-t_2\frac{w}{z})(1-t_1\frac{w}{z})  \prod_{x\in R(\mu')}(1-t_1t_2\frac{x}{z})}{ (1-\frac{w}{z})(1-t_1t_2\frac{w}{z})\prod_{y\in A(\mu')}(1-\frac{y}{z})}
    \end{align*}
    which is what we want.
    \item Here we have
    \begin{align*}
        R(\mu')=(R(\mu)\backslash\{w_1\})\cup \{w\}, && A(\mu')=(A(\mu)\backslash \{w\})\cup\{t_1w\}
    \end{align*}
    so that
    \begin{align*}
        \frac{ \prod_{x\in R(\mu)}(1-t_1t_2\frac{x}{z})}{ \prod_{y\in A(\mu)}(1-\frac{y}{z})}&=\frac{(1-t_1t_2\frac{w_1}{z})(1-t_1t_2\frac{w}{z})^{-1}  \prod_{x\in R(\mu')}(1-t_1t_2\frac{x}{z})}{ (1-\frac{w}{z})(1-t_1\frac{w}{z})^{-1}\prod_{y\in A(\mu')}(1-\frac{y}{z})}\\
        &=\frac{(1-t_2\frac{w}{z})(1-t_1\frac{w}{z})  \prod_{x\in R(\mu')}(1-t_1t_2\frac{x}{z})}{ (1-\frac{w}{z})(1-t_1t_2\frac{w}{z})\prod_{y\in A(\mu')}(1-\frac{y}{z})}
    \end{align*}
    which is what we want.
    \item Here we have
    \begin{align*}
R(\mu')=R(\mu)\cup\{w\}, && A(\mu')=(A(\mu)\backslash \{w\})\cup\{t_1w,t_2w\}
    \end{align*}
    so that
    \begin{align*}
        \frac{ \prod_{x\in R(\mu)}(1-t_1t_2\frac{x}{z})}{ \prod_{y\in A(\mu)}(1-\frac{y}{z})}&=\frac{(1-t_1t_2\frac{w}{z})^{-1}  \prod_{x\in R(\mu')}(1-t_1t_2\frac{x}{z})}{ (1-\frac{w}{z})(1-t_1\frac{w}{z})^{-1}(1-t_2\frac{w}{z})^{-1}\prod_{y\in A(\mu')}(1-\frac{y}{z})}\\
        &=\frac{(1-t_2\frac{w}{z})(1-t_1\frac{w}{z})  \prod_{x\in R(\mu')}(1-t_1t_2\frac{x}{z})}{ (1-\frac{w}{z})(1-t_1t_2\frac{w}{z})\prod_{y\in A(\mu')}(1-\frac{y}{z})}
    \end{align*}
    which is what we want.
\end{enumerate}
\end{proof}

\begin{definition}
With this in mind we define
\[\omega_{\mu}(x)=\frac{1}{1-x^{-1}}\prod_{z\in \mu}\zeta\left(\frac{z}{x}\right)=\frac{\prod_{r\in R(\mu)}(1-\mathfrak{t}\frac{r}{x})}{\prod_{a\in A(\mu)}(1-\frac{a}{x})}\]
as a meromorphic function. For a nesting of partitions $\lambda\subset \mu$ we therefore have
\[P_{\mu/\lambda}=\prod_{z\in \mu\backslash \lambda}\omega_\mu(z).\]
\end{definition}

\begin{rec}
We note that $\omega$ satisfies the branching rule, where for $w\in A(\mu)$ we have 
\begin{align*}\omega_{\mu\cup \{w\}}(x)&=\zeta\left(\frac{w}{x}\right)\omega_{\mu}(x)\\
&=\frac{(1-\frac{w}{x})(1-\mathfrak{t}\frac{w}{x})}{(1-t_1\frac{w}{x})(1-t_2\frac{w}{x})}\frac{\prod_{r\in R(\mu)}(1-\mathfrak{t}\frac{r}{x})}{\prod_{a\in A(\mu)}(1-\frac{a}{x})}\\
&=\frac{(1-\mathfrak{t}\frac{w}{x})}{(1-t_1\frac{w}{x})(1-t_2\frac{w}{x})}\frac{\prod_{r\in R(\mu)}(1-\mathfrak{t}\frac{r}{x})}{\prod_{a\in A(\mu)\backslash \{w\}}(1-\frac{a}{x})}
\end{align*}
and so in particular
\[\omega_{\mu\cup \{w\}}(w)=C\frac{\prod_{r\in R(\mu)}(1-\mathfrak{t}\frac{r}{w})}{\prod_{a\in A(\mu)\backslash \{w\}}(1-\frac{a}{w})}.\]
We note also the following symmetry of $\zeta$
\[\zeta\left(\frac{1}{\mathfrak{t}x}\right)=\frac{(1-\frac{1}{\mathfrak{t}x})(1-\mathfrak{t}\frac{1}{\mathfrak{t}x})}{(1-t_1\frac{1}{\mathfrak{t}x})(1-t_2\frac{1}{\mathfrak{t}x})}=\frac{\mathfrak{t}^{-1}x^{-2}(\mathfrak{t}x-1)(x-1)}{t_2^{-1}t_1^{-1}x^{-2}(t_2x-1)(t_1x-1)}=\zeta(x).\]
We will frequently use these rules in the computations below.
\end{rec}

Recall that our goal is to show that $F_{\lambda}(u)$ is independent of $\lambda$, so that 
\[F_{\lambda}(u)=F_{\varnothing}(u)=\operatorname{PE}\left(\frac{u(1-t_1t_2)}{(1-t_1)(1-t_2)}\right).\]
Suppose therefore $w\in A(\lambda)$ is an addable box and let $\lambda'=\lambda \cup \{w\}$ be the diagram obtained by adding $w$ to $\lambda$. We want to prove that $F_{\lambda}(u)=F_{\lambda'}(u)$. We can split the sum as 
\[F_{\lambda}(u)=\sum_{\lambda'\subset \mu} u^{|\mu|-|\lambda|}P_{\mu/\lambda}+\sum_{\underset{w\notin \nu}{\lambda \subset \nu}}u^{|\nu|-|\lambda|}P_{\nu/\lambda}.\]
Sending $\nu\mapsto \nu\cup \{w\}$ provides a bijection
\[\{\lambda \subset \nu \:| \: w\notin \nu\}\cong \{\lambda \subset \mu \: | \: w\in R(\omega)\}.\]
We note further that
\[P_{\mu/\lambda}=\prod_{z\in \mu\backslash\lambda}\omega_{\mu}(z)=\omega_{\mu}(w)\prod_{z\in \mu\backslash\lambda'}\omega_{\mu}(z)=\omega_{\mu}(w)P_{\mu/\lambda'}\]
and for $\mu=\nu\cup \{w\}$ we have
\[P_{\mu/\lambda'}=\prod_{z\in (\nu\cup\{w\})\backslash(\lambda\cup\{w\})}\omega_{\nu\cup\{w\}}(z)=\prod_{z\in\nu \backslash \lambda }\zeta(w/z)\omega_{\nu}(z)=P_{\nu/\lambda}\prod_{z\in\nu \backslash \lambda }\zeta(w/z)\]
so setting 
\[\kappa_{\mu/\lambda'}(x)=\prod_{z\in \mu\backslash \lambda'}\zeta(x/z)^{-1}=\prod_{z\in \mu\backslash \lambda'}\frac{(1-t_1x/z)(1-t_2x/z)}{(1-x/z)(1-\mathfrak{t}x/z)}\]
we obtain 
\[P_{\nu/\lambda}=\kappa_{\mu/\lambda'}(w)P_{\mu/\lambda'}.\]
Using this, it follows that we may rewrite 
\[F_{\lambda}(u)=\sum_{\lambda'\subset \mu}(u\cdot \omega_{\mu}(w)) u^{|\mu|-|\lambda'|}P_{\mu/\lambda'}+\sum_{\underset{w\in R(\mu)}{\lambda' \subset \mu}}\kappa_{\mu/\lambda'}(w)u^{|\mu|-|\lambda'|}P_{\mu/\lambda'}.\]
We note that if $w\notin R(\mu)$ for $\lambda'\subset \mu$ we have $\kappa_{\mu/\lambda'}(w)=0$. Indeed, since $w\notin R(\mu)$ we must have $t_1w\in \mu$ or/and $t_2w\in\mu$ so that the numerator of $\kappa_{\mu/\lambda'}(x)$ has a zero of degree 1 or 2 at $x=w$. Conversely, since $w\in \lambda'$ we see that the denominator has a zero - which is necessarily of degree 1 - at $x=w$ if and only if $t_1t_2w\in \mu$, but then $t_1w,t_2w\in \mu$ so that the numerator has a zero of degree 2. It follows that
\[F_{\lambda}(u)=\sum_{\lambda'\subset \mu}(u\cdot \omega_{\mu}(w)+\kappa_{\mu/\lambda'}(w)) u^{|\mu|-|\lambda'|}P_{\mu/\lambda'}\]
and in particular, we see that
\[F_{\lambda}(u)-F_{\lambda'}(u)=\sum_{\lambda'\subset \mu}(u\cdot \omega_{\mu}(w)+\kappa_{\mu/\lambda'}(w)-1) u^{|\mu|-|\lambda'|}P_{\mu/\lambda'}.\]
This motivates the following definition.
\begin{definition}
Let $\lambda$ be a Young diagram. We set
\[D_{\lambda}(x)=\sum_{\lambda\subset \mu}(u\cdot \omega_{\mu}(x)+\kappa_{\mu/\lambda}(x)-1) u^{|\mu|-|\lambda|}P_{\mu/\lambda}\]
and
\[D_{\lambda,n}(x)=\sum_{\underset{|\mu\backslash\lambda|=n}{\lambda \subset \mu}}P_{\mu/\lambda}(\kappa_{\mu/\lambda}(x)-1)+\sum_{\underset{|\nu\backslash\lambda|=n-1}{\lambda \subset \nu}}P_{\nu/\lambda}\omega_{\nu}(x)\]
the coefficient of $u^n$ in $D_{\lambda}(u)$. Similarly we let 
\[c_n(\lambda)=\sum_{\underset{|\mu|-|\lambda|=n}{\lambda\subset \mu}}P_{\mu/\lambda}\]
the coefficient of $u^n$ in $F_{\lambda}(u)$.
\end{definition}
The above discussion now shows the following lemma.
\begin{lemma}
    Let $\lambda $ be a partition and let $w\in R(\lambda)$ be a removable box. Then $c_n(\lambda)=c_n(\lambda\backslash\{w\})$ if and only if $D_{\lambda,n}(w)=0$. In particular, if $D_{\lambda,n}(w)=0$ for all $\lambda$ and all $w\in R(\lambda)$ then $c_n=c_n(\lambda)$ is independent of $\lambda$. 
\end{lemma}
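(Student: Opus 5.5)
The lemma only repackages the computation just carried out for $F_\lambda(u)-F_{\lambda'}(u)$, so I will reuse that derivation with roles renamed. Given $\lambda$ and $w\in R(\lambda)$, set $\lambda^-=\lambda\backslash\{w\}$. Then $w\in A(\lambda^-)$ and $\lambda=\lambda^-\cup\{w\}$. This is exactly the situation of the preceding discussion, with $\lambda^-$ in place of $\lambda$ and $\lambda$ in place of $\lambda'$. Splitting the sum defining $F_{\lambda^-}(u)$ according to whether $w\in\mu$, and using the identities
\[P_{\mu/\lambda^-}=\omega_\mu(w)P_{\mu/\lambda},\qquad P_{\nu/\lambda^-}=\kappa_{\mu/\lambda}(w)P_{\mu/\lambda}\quad(\mu=\nu\cup\{w\}),\]
gives an identity of formal power series in $u$:
\[F_{\lambda^-}(u)-F_{\lambda}(u)=\sum_{\lambda\subset\mu}\bigl(u\,\omega_\mu(w)+\kappa_{\mu/\lambda}(w)-1\bigr)u^{|\mu|-|\lambda|}P_{\mu/\lambda}=D_\lambda(w).\]
This also uses the vanishing $\kappa_{\mu/\lambda}(w)=0$ whenever $w\notin R(\mu)$, which lets the second sum run over all $\mu\supset\lambda$. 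One point to check is that the bijection $\nu\mapsto\nu\cup\{w\}$ lands exactly in $\{\mu\supset\lambda : w\in R(\mu)\}$. If $\nu\supset\lambda^-$ is a Young diagram not containing $w$, then $t_1w,t_2w\notin\nu$, so $w$ is addable to $\nu$ and removable from $\nu\cup\{w\}$. Conversely, removing a removable box $w$ from $\mu\supset\lambda$ gives a diagram containing $\lambda^-$.

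Next I compare coefficients of $u^n$. We have $[u^n]F_{\lambda^-}(u)=c_n(\lambda^-)$ and $[u^n]F_\lambda(u)=c_n(\lambda)$. On the right, $[u^n]D_\lambda(w)$ has two parts:
\begin{itemize}
\item the terms $\kappa_{\mu/\lambda}(w)-1$ with $|\mu\backslash\lambda|=n$;
\item the terms $\omega_\nu(w)$ with $|\nu\backslash\lambda|=n-1$.
\end{itemize}
Together these give exactly $D_{\lambda,n}(w)$. So $c_n(\lambda^-)-c_n(\lambda)=D_{\lambda,n}(w)$, which yields the equivalence.

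All quantities should be regarded as elements of $\Q(t_1,t_2)$, and evaluating at $x=w$ must make sense. This is the main point to be careful about. The factors $\omega_\mu(w)$ may have poles: $\omega_\mu$ has poles at the addable boxes of $\mu$, and $w\in\mu$ is never addable, but the factor $(1-\mathfrak{t}r/x)$ can vanish. Likewise $\kappa$ may have a $0/0$ form. I would interpret each $\omega_\mu(w)$ and $\kappa_{\mu/\lambda}(w)$ as its value as a rational function in $t_1,t_2$, after substituting $x=w$ with $w$ a monomial. Poles of $\omega_\mu(x)$ occur only at $x\in A(\mu)$, so $\omega_\mu(w)$ is finite. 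For $\kappa_{\mu/\lambda}(w)$, the earlier degree count shows the denominator zeros are dominated by numerator zeros, so the value is a well-defined element, possibly $0$. With these conventions every step above is an honest identity in $\Q(t_1,t_2)$.

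For the \emph{in particular}, suppose $D_{\lambda,n}(w)=0$ for all $\lambda$ and all $w\in R(\lambda)$. Then $c_n(\lambda)=c_n(\lambda\backslash\{w\})$ for every removable box. Any Young diagram can be reduced to $\varnothing$ by successively removing removable boxes, for instance a corner of the last row. Induction on $|\lambda|$ then gives $c_n(\lambda)=c_n(\varnothing)$ for all $\lambda$. I expect no real obstacle here; the only delicate part is the bookkeeping of the bijection and the evaluation conventions described above.
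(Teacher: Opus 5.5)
Your proposal is correct and follows the paper's own argument: the lemma is exactly the preceding computation of $F_\lambda(u)-F_{\lambda'}(u)$ for $\lambda'=\lambda\cup\{w\}$, with the roles renamed and the coefficient of $u^n$ extracted, as you do. Your checks of the bijection $\nu\mapsto\nu\cup\{w\}$ and of why $\omega_\mu(w)$ and $\kappa_{\mu/\lambda}(w)$ are well defined make explicit what the paper leaves implicit. One small slip: a vanishing factor $(1-\mathfrak{t}r/x)$ would give a zero, not a pole, and it cannot vanish at $x=w\in\mu$ anyway.
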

With this in mind we begin the study of $D_{\lambda,n}(x)$. We start by identifying its poles.

\begin{lemma}
    $D_{\lambda,n}(x)$ is a meromorphic function. It has poles exactly at $x=a/\mathfrak{t}$ for $a\in A(\lambda)$ and these poles are simple with residue 
    \[\operatorname{Res}_{x=a/\mathfrak{t}}D_{\lambda,n}(x)=\frac{a\cdot \omega_{\lambda\cup\{a\}}(a)}{\mathfrak{t}C}\cdot c_{n-1}(\lambda\cup\{a\}).\]
\end{lemma}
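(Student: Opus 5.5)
The plan is to treat $D_{\lambda,n}(x)$ as a finite sum of rational functions in $x$ and locate every possible pole of every summand. The summands are of two kinds. The first kind is $P_{\mu/\lambda}(\kappa_{\mu/\lambda}(x)-1)$ with $|\mu\backslash\lambda|=n$. The second kind is $P_{\nu/\lambda}\omega_\nu(x)$ with $|\nu\backslash\lambda|=n-1$. Their candidate poles are:
\begin{itemize}
\item $x=a$ for $a\in A(\nu)$, coming from $\omega_\nu(x)=\prod_{r\in R(\nu)}(1-\mathfrak{t}\frac{r}{x})/\prod_{a\in A(\nu)}(1-\frac{a}{x})$;
\item $x=z$ for $z\in\mu\backslash\lambda$, coming from the factors $(1-x/z)$ in the denominator of $\kappa_{\mu/\lambda}$;
\item $x=z/\mathfrak{t}$ for $z\in\mu\backslash\lambda$, coming from the factors $(1-\mathfrak{t}x/z)$ in the same denominator.
\end{itemize}
I will show three things: the poles of the first two types cancel; the third type survives only when $z\in A(\lambda)$; and in that case the pole is simple with the stated residue.

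\emph{Cancellation at $x=z$.} Fix $z\in\mu\backslash\lambda$. The denominator factor $(1-x/z)$ vanishes there. So does $(1-\mathfrak{t}x/z')$ with $z'=\mathfrak{t}z$, if that box lies in $\mu$. The numerator vanishes at $x=z$ through $(1-t_1x/z'')$ and $(1-t_2x/z'')$ with $z''=t_1z$ and $z''=t_2z$. Suppose $z\notin R(\mu)$. If $\mathfrak{t}z\in\mu$ then both $t_1z,t_2z\in\mu$, and otherwise at least one of them is in $\mu$. In either case the numerator zeros at least match the denominator zeros, so there is no pole. So only $z\in R(\mu)$ gives a simple pole. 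I pair $\mu$ with $\nu=\mu\backslash\{z\}$. This is a bijection between pairs $(\mu,z)$ with $z\in R(\mu)\cap(\mu\backslash\lambda)$ and pairs $(\nu,z)$ with $z\in A(\nu)$, since $\nu\cup\{z\}\supset\lambda$ automatically. The branching rule and the definition of $\kappa$ give
\[P_{\mu/\lambda}=\omega_\mu(z)\,P_{\nu/\lambda}\prod_{z'\in\nu\backslash\lambda}\zeta(z/z'),\qquad \kappa_{\mu/\lambda}(x)=\zeta(x/z)^{-1}\kappa_{\nu/\lambda}(x).\]
At $x=z$ the product $\prod\zeta(z/z')$ cancels against $\kappa_{\nu/\lambda}(z)$. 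Using $\operatorname{Res}_{x=z}\zeta(x/z)^{-1}=-z/C$ together with the Recall formula $\omega_\mu(z)=C\prod_{r\in R(\nu)}(1-\mathfrak{t}\frac rz)/\prod_{a\in A(\nu)\backslash\{z\}}(1-\frac az)$, the $\kappa$-residue becomes exactly $-P_{\nu/\lambda}\operatorname{Res}_{x=z}\omega_\nu(x)$. Hence the two residues cancel.

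\emph{Poles at $x=z/\mathfrak{t}$.} The factor $(1-\mathfrak{t}x/z)$ vanishes there. So does $(1-x/(z/\mathfrak{t}))$ if $z/\mathfrak{t}\in\mu\backslash\lambda$. The numerator zeros come from the boxes $z/t_1,z/t_2\in\mu\backslash\lambda$. The same case analysis as above shows there is no pole unless $z/t_1,z/t_2\notin\mu\backslash\lambda$, i.e.\ unless $z=a\in A(\lambda)$. In that case the pole is simple. I also check that $a/\mathfrak{t}$ is never an addable box of any $\nu\supset\lambda$: either it lies in $\lambda$, or it is not a lattice point of the quadrant. So the $\omega$-terms contribute nothing here, and only those $\mu$ containing $a$ contribute.

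\emph{Computing the residue.} For such $\mu$ write $\lambda'=\lambda\cup\{a\}$. Then $\kappa_{\mu/\lambda}(x)=\zeta(\mathfrak{t}x/a)^{-1}$-type factor times $\kappa_{\mu/\lambda'}(x)$. A direct computation gives
\[\operatorname{Res}_{x=a/\mathfrak{t}}\frac{(1-t_1x/a)(1-t_2x/a)}{(1-x/a)(1-\mathfrak{t}x/a)}=\frac{a}{\mathfrak{t}C}.\]
The symmetry $\zeta(1/(\mathfrak{t}y))=\zeta(y)$ gives $\kappa_{\mu/\lambda'}(a/\mathfrak{t})=\prod_{z\in\mu\backslash\lambda'}\zeta(z/a)^{-1}$. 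In addition,
\[P_{\mu/\lambda}=\omega_\mu(a)P_{\mu/\lambda'},\qquad \omega_\mu(a)=\omega_{\lambda'}(a)\prod_{z\in\mu\backslash\lambda'}\zeta(z/a).\]
So each summand's residue is $\frac{a}{\mathfrak{t}C}\,\omega_{\lambda'}(a)\,P_{\mu/\lambda'}$. Summing over $\mu\supset\lambda'$ with $|\mu\backslash\lambda'|=n-1$ yields $c_{n-1}(\lambda')$, which is the claimed formula.

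The main obstacle is the careful multiplicity bookkeeping: verifying in each configuration of neighbouring boxes that the numerator zeros of $\kappa$ really dominate the denominator zeros. The residue identities themselves reduce to the Recall formulas.
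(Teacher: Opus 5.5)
Your proof is correct and is essentially the paper's argument: the poles of $\kappa_{\mu/\lambda}$ at $b\in R(\mu)\setminus R(\lambda)$ cancel in pairs against those of $\omega_\nu$ at $b\in A(\nu)$ via $\mu=\nu\cup\{b\}$. The residue at $a/\mathfrak{t}$ is then computed with the same identities as in the paper, namely $P_{\mu/\lambda}=\omega_\mu(a)P_{\mu/(\lambda\cup\{a\})}$, $\zeta(x/z)=\zeta(z/\mathfrak{t}x)$ and $\operatorname{Res}_{x=a/\mathfrak{t}}\zeta(x/a)^{-1}=a/(\mathfrak{t}C)$. The only difference is how you locate the poles of $\kappa_{\mu/\lambda}$: you count numerator and denominator zeros box by box, whereas the paper rewrites $\kappa_{\mu/\lambda}$ as $\omega_\lambda(\mathfrak{t}x)/\omega_\mu(\mathfrak{t}x)$ and reads the poles off the $R/A$ product formula. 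One small slip: the factor you split off at $a/\mathfrak{t}$ is $\zeta(x/a)^{-1}$, not a ``$\zeta(\mathfrak{t}x/a)^{-1}$-type'' factor, though your displayed residue computation already uses the correct $\zeta(x/a)^{-1}$.
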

\begin{proof}
    Note first that 
\[\zeta(x/z)=\zeta(z/\mathfrak{t}x)\]
and so 
\[\omega_{\lambda}(\mathfrak{t}x)\cdot \kappa_{\mu/\lambda}(x)^{-1}=\frac{1}{1-(\mathfrak{t}x)^{-1}}\prod_{z\in \lambda}\zeta(z/\mathfrak{t}x)\cdot \prod_{z\in \mu\backslash\lambda}\zeta(z/\mathfrak{t}x)=\omega_{\mu}(\mathfrak{t}x)\]
so that
\[\kappa_{\mu/\lambda}(x)=\frac{\omega_\lambda(\mathfrak{t}x)}{\omega_{\mu}(\mathfrak{t}x)}=\frac{\prod_{r\in R(\lambda)}(1-r/x)\prod_{a\in A(\mu)}(1-a/\mathfrak{t}x)}{\prod_{a\in A(\lambda)}(1-a/\mathfrak{t}x)\prod_{r\in R(\mu)}(1-r/x)}\]
i.e. the possible poles are for $x=b\in R(\mu)\backslash R(\lambda )$ or $x=a/\mathfrak{t}$ for $a\in A(\lambda)$.  Note that if $a\in A(\lambda)$ then $a/t_1,a/t_2\in \lambda$ hence $a/\mathfrak{t}\in \lambda\backslash R(\lambda)$. In particular $a/\mathfrak{t}\notin R(\mu)$ so all these possible poles are distinct and at most of order 1. On the other hand we have 
\[\omega_{\nu}(x)=\frac{\prod_{r\in R(\nu)}(1-\mathfrak{t}r/x)}{\prod_{a\in A(\nu)}(1-a/x)}\]
which has simple poles at $x=b\in A(\nu)$. Note that if we choose any box $b$ outside $\lambda$ which is addable for a $\nu$ or removable for a $\mu$ we have a bijection 
\[\{\nu\supset \lambda\:| \: |\nu\backslash\lambda|=n-1,\, b\in A(\nu)\}\longleftrightarrow \{\mu\supset \lambda\:| \: |\nu\backslash\lambda|=n,\, b\in R(\mu)\backslash R(\lambda)\}\]
simply given by either removing or adding the box $b$. For a fixed $b$ we will show that the residue cancels for such a pair $(\mu,\nu)$ with $\mu=\nu\cup \{b\}$. We note that
\[\operatorname{Res}_{x=b}\omega_{\nu}(x)dx=b\frac{\prod_{r\in R(\nu)}(1-\mathfrak{t}r/b)}{\prod_{a\in A(\nu)\backslash\{b\}}(1-a/b)}=\frac{b}{C}\omega_{\mu}(b)\]
where $\mu=\nu\cup \{b\}$ and where we have used the branching rule. On the other hand
\[\operatorname{Res}_{x=b}\zeta(x/b)^{-1}dx=-\frac{b}{C}\]
so that 
\[\operatorname{Res}_{x=b}\kappa_{\mu/\lambda}(x)=-\frac{b}{C}\prod_{z\in \nu\backslash\lambda }\zeta(b/z)^{-1}.\]
Hence the two summands of $\operatorname{Res}_{x=b}D_{\lambda,n}(x)$ corresponding to $\mu$ and $\nu$ with $\mu=\nu\cup \{b\}$ are given by
\[P_{\mu/\lambda}\left(-\frac{b}{C}\prod_{z\in \nu\backslash\lambda }\zeta(b/z)^{-1}\right)+P_{\nu/\lambda}\frac{b}{C}\omega_{\mu}(b)\]
but
\[P_{\mu/\lambda}=\prod_{z\in \nu\cup \{b\}/\lambda}\omega_{\nu\cup \{b\}}(z)=\omega_{\mu}(b)\prod_{z\in \nu}\zeta(b/z)\omega_{\nu}(z)=\omega_{\mu}P_{\nu/\lambda}(b)\prod_{z\in \nu}\zeta(b/z)\]
which shows that $\operatorname{Res}_{x=b}D_{\lambda,n}(x)=0$. For the remaining possible poles we see that for $a\in A(\lambda)$ we get a pole for each $\mu$ such that $a\notin A(\mu)$ i.e. $a\in \mu$. We see that
\[\operatorname{Res}_{x=a/\mathfrak{t}}\zeta(x/a)^{-1}dx=\frac{a}{\mathfrak{t}C}\]
and so for $\mu$ with $\lambda \cup \{a\}\subset \mu$ we get 
\[\operatorname{Res}_{x=a/\mathfrak{t}}\kappa_{\mu/\lambda}(x) dx=\frac{a}{\mathfrak{t}C}\prod_{z\in \mu \backslash (\lambda\cup \{a\})}\zeta(a/\mathfrak{t}z)^{-1}=\frac{a}{\mathfrak{t}C}\prod_{z\in \mu \backslash (\lambda\cup \{a\})}\zeta(z/a)^{-1}.\]
We note that 
\[P_{\mu/\lambda}=\prod_{z\in \mu\backslash \lambda}\omega_{\mu}(z)=\omega_{\mu}(a)P_{\mu/(\lambda\cup \{a\})}\]
and that
\[w_{\mu}(a)=\prod_{z\in \mu}\zeta(z/a)\]
so that
\[\omega_{\mu}(a)\prod_{z\in \mu \backslash (\lambda\cup \{a\})}\zeta(z/a)^{-1}=\omega_{\lambda \cup a}(a)\]
hence we get 
\[\operatorname{Res}_{x=a/\mathfrak{t}}D_{\lambda,n}(x)=\frac{a\omega_{\lambda\cup\{a\}}(a)}{\mathfrak{t}C}\sum_{\underset{|\mu|-|\lambda \cup \{a\}|=n-1}{\lambda\cup \{a\}\subset \mu}}P_{\mu/(\lambda\cup \{a\})}=\frac{a\omega_{\lambda\cup\{a\}}(a)}{\mathfrak{t}C}\cdot c_{n-1}(\lambda\cup\{a\})\]
\end{proof}
Next, we identify the limit.
\begin{lemma}
    Let $\lambda$ be a diagram. We have 
    \[\lim_{x\to \infty} D_{\lambda,n}(x)=c_{n-1}(\lambda).\]
\end{lemma}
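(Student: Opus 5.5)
The plan is to compute the limit term by term. $D_{\lambda,n}(x)$ is a finite sum of rational functions in $x$: there are only finitely many $\mu\supset\lambda$ with $|\mu\backslash\lambda|=n$ and finitely many $\nu\supset\lambda$ with $|\nu\backslash\lambda|=n-1$. The coefficients $P_{\mu/\lambda}$ and $P_{\nu/\lambda}$ do not depend on $x$. So it suffices to find the limit as $x\to\infty$ of each of the two $x$-dependent factors, $\kappa_{\mu/\lambda}(x)$ and $\omega_\nu(x)$, and then sum.

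For the first group of terms, I would use the product formula
\[\kappa_{\mu/\lambda}(x)=\prod_{z\in \mu\backslash \lambda}\frac{(1-t_1x/z)(1-t_2x/z)}{(1-x/z)(1-\mathfrak{t}x/z)}.\]
Each factor is a ratio of two polynomials in $x$ of degree $2$. The leading coefficients are $t_1t_2/z^2$ in the numerator and $\mathfrak{t}/z^2$ in the denominator. Since $\mathfrak{t}=t_1t_2$, each factor tends to $1$ as $x\to\infty$. Hence $\kappa_{\mu/\lambda}(x)-1\to 0$, and the whole first sum $\sum_{\mu}P_{\mu/\lambda}(\kappa_{\mu/\lambda}(x)-1)$ vanishes in the limit. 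Equivalently, one could note that $\zeta(y)\to 1$ as $y\to 0$.

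For the second group, I would use the closed form
\[\omega_\nu(x)=\frac{\prod_{r\in R(\nu)}(1-\mathfrak{t}r/x)}{\prod_{a\in A(\nu)}(1-a/x)},\]
which was established in the lemma preceding the definition of $\omega_\mu$. Every factor tends to $1$ as $x\to\infty$, so $\omega_\nu(x)\to 1$. The same conclusion follows directly from the definition $\omega_\nu(x)=\frac{1}{1-x^{-1}}\prod_{z\in\nu}\zeta(z/x)$, because $\zeta(0)=1$. Therefore
\[\sum_{|\nu\backslash\lambda|=n-1}P_{\nu/\lambda}\,\omega_\nu(x)\longrightarrow\sum_{|\nu\backslash\lambda|=n-1}P_{\nu/\lambda}=c_{n-1}(\lambda).\]

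Combining the two groups gives $\lim_{x\to\infty}D_{\lambda,n}(x)=c_{n-1}(\lambda)$. For $n=0$ the second sum is empty and the limit is $0=c_{-1}(\lambda)$ by convention. There is no real obstacle in this argument. The only point needing care is that the degree-two leading coefficients in each factor of $\kappa$ really cancel, and this rests exactly on the identity $\mathfrak{t}=t_1t_2$.
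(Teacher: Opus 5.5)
Your argument is correct and matches the paper's proof: both take the limit term by term over the finitely many $\mu,\nu$, using that each factor of $\kappa_{\mu/\lambda}(x)$ tends to $t_1t_2/\mathfrak{t}=1$ and that $\omega_\nu(x)\to 1$ from its closed form. One small slip in your aside: in $\kappa_{\mu/\lambda}$ the argument $x/z$ of $\zeta$ tends to $\infty$, not $0$, so the remark that $\zeta(y)\to 1$ as $y\to 0$ only applies after the symmetry $\zeta(x/z)=\zeta(z/\mathfrak{t}x)$ (alternatively, $\zeta(y)\to 1$ as $y\to\infty$ as well).
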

\begin{proof}
We see that
\[\kappa_{\mu/\lambda}(x)=\prod_{z\in \mu\backslash \lambda}\zeta(x/z)^{-1}=\prod_{z\in \mu\backslash \lambda}\frac{(1-t_1x/z)(1-t_2x/z)}{(1-x/z)(1-\mathfrak{t}x/z)}\]
which tends to 
\[\prod_{z\in \mu\backslash \lambda}\frac{(t_1/z)(t_2/z)}{(1/z)(\mathfrak{t}/z)}=1\]
when $x\to \infty$. Meanwhile
\[\omega_{\nu}(x)=\frac{\prod_{r\in R(\mu)}(1-\mathfrak{t}\frac{r}{x})}{\prod_{a\in A(\mu)}(1-\frac{a}{x})}\to 1\]
as $x\to \infty$. It follows that 
\[D_{\lambda,n}(x)\to\sum_{\underset{|\nu\backslash\lambda|=n-1}{\lambda \subset \nu}}P_{\nu/\lambda}=c_{n-1}(\lambda). \] 
\end{proof}
\begin{theorem}
    The expression $c_n(\lambda)$ is independent of $\lambda$ for all $n\geq 0$. \label{main combinatorial theorem}
\end{theorem}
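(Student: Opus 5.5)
We argue by induction on $n$. The case $n=0$ is trivial, since $c_0(\lambda)=P_{\lambda/\lambda}=1$ for every $\lambda$ (empty product). Suppose now that $c_{n-1}(\lambda)$ is independent of $\lambda$; call the common value $c_{n-1}$. By the lemma relating $c_n(\lambda)$ and $c_n(\lambda\backslash\{w\})$ to $D_{\lambda,n}(w)$, it suffices to show that $D_{\lambda,n}(w)=0$ for every diagram $\lambda$ and every $w\in R(\lambda)$. Indeed, any $\lambda$ can be reduced to $\varnothing$ by successively removing removable boxes.

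The strategy is to pin down $D_{\lambda,n}(x)$ as a rational function in $x$ using its poles and its value at infinity. By the pole lemma, $D_{\lambda,n}$ is rational in $x$, its only poles are simple poles at $x=a/\mathfrak{t}$ for $a\in A(\lambda)$, and by the limit lemma it tends to $c_{n-1}(\lambda)=c_{n-1}$ as $x\to\infty$. We should also check that there is no pole at $x=0$. The factors $\zeta(x/z)^{-1}$ are regular at $0$, and $\omega_\nu(x)=\prod(1-\mathfrak{t}r/x)/\prod(1-a/x)$ is regular at $0$ because $|A(\nu)|=|R(\nu)|+1$. Hence
\[D_{\lambda,n}(x)=c_{n-1}+\sum_{a\in A(\lambda)}\frac{a\,\omega_{\lambda\cup\{a\}}(a)}{\mathfrak{t}C}\,c_{n-1}\,\frac{1}{x-a/\mathfrak{t}},\]
where we have used the inductive hypothesis $c_{n-1}(\lambda\cup\{a\})=c_{n-1}$. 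Consequently $D_{\lambda,n}(x)=c_{n-1}\,G_\lambda(x)$, where $G_\lambda$ is the same expression with $c_{n-1}$ replaced by $1$. The function $G_\lambda$ no longer depends on $n$.

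It remains to show that $G_\lambda(w)=0$ for $w\in R(\lambda)$, and the trick is to identify $G_\lambda$ with a known closed form. The natural candidate is $G_\lambda(x)=\omega_\lambda(\mathfrak{t}x)^{-1}$ up to a normalising factor, or more precisely
\[G_\lambda(x)=\frac{\prod_{a\in A(\lambda)}(1-a/\mathfrak{t}x)}{\prod_{r\in R(\lambda)}(1-r/x)}\cdot(\text{const})^{-1}.\]
Inverting the formula for $\omega_\lambda$ gives $\omega_\lambda(\mathfrak{t}x)^{-1}=\prod_{a\in A(\lambda)}(1-a/\mathfrak{t}x)/\prod_{r\in R(\lambda)}(1-r/x)$. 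This has a zero at each $r\in R(\lambda)$, but it has poles at the removable boxes rather than at $a/\mathfrak{t}$, so it is the wrong candidate. The correct one is the reciprocal,
\[H_\lambda(x)=\frac{\prod_{r\in R(\lambda)}(1-r/x)}{\prod_{a\in A(\lambda)}(1-a/\mathfrak{t}x)}=\omega_\lambda(\mathfrak{t}x)\bigl(1-(\mathfrak{t}x)^{-1}\bigr)\prod\ldots,\]
which has simple poles exactly at $a/\mathfrak{t}$, tends to $1$ at infinity, vanishes at every $r\in R(\lambda)$, and is regular at $0$ because $|A|=|R|+1$ gives it a zero of order one there. Since both $G_\lambda$ and $H_\lambda$ are determined by their value at infinity and their residues, it suffices to check that the residues agree:
\[\operatorname{Res}_{x=a/\mathfrak{t}}H_\lambda(x)=\frac{a\,\omega_{\lambda\cup\{a\}}(a)}{\mathfrak{t}C}\quad\text{for every } a\in A(\lambda).\]
This is a direct computation. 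The left side equals $\frac{a}{\mathfrak{t}}\prod_r(1-\mathfrak{t}r/a)/\prod_{a'\neq a}(1-a'/a)$. The branching-rule formula gives
\[\omega_{\lambda\cup\{a\}}(a)=C\prod_{r}(1-\mathfrak{t}r/a)\Big/\prod_{a'\neq a}(1-a'/a),\]
so the factors of $C$ cancel and the two residues match exactly. Therefore $G_\lambda=H_\lambda$, so $G_\lambda(w)=0$ for all $w\in R(\lambda)$, hence $D_{\lambda,n}(w)=0$, which completes the induction.

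The main obstacle I expect is the careful bookkeeping behind this identification. One must confirm that no pole at $x=0$ or at any other point survives (the pole lemma covers the other points; $x=0$ needs the separate check above), and that the residue computation matches exactly, including the sign convention for $\operatorname{Res}$ in $x$ versus the variable $1/x$.
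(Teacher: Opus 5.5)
Your proof is correct and follows the paper's argument. You induct on $n$, determine $D_{\lambda,n}(x)$ from its simple poles at $a/\mathfrak{t}$ and its limit at infinity, and identify it with $c_{n-1}H_\lambda(x)$, which by $|A(\lambda)|=|R(\lambda)|+1$ is exactly the paper's $x\,c_{n-1}\prod_{r\in R(\lambda)}(x-r)/\prod_{a\in A(\lambda)}(x-a/\mathfrak{t})$, with the residues matched via the branching rule as in the paper. Your aside about the ``wrong candidate'' is muddled and can be dropped: the closed form of $\omega_\lambda$ gives $H_\lambda(x)=\omega_\lambda(\mathfrak{t}x)$ exactly, with no extra factor, and your explicit check of regularity at $x=0$ is a detail the paper leaves implicit.
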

\begin{proof}
    We proceed by induction on $n$ with the induction start $n=0$ being trivial as $c_0(\lambda)=1$. Assuming that $c_m=c_m(\lambda)$ is independent of $\lambda$ for $m<n$ we will show that
    \[D_{\lambda,n}(x)=xc_{n-1}\frac{\prod_{r\in R(\lambda)}(x-r)}{\prod_{a\in A(\lambda)}(x-a/\mathfrak{t})}\]
    which in particular implies that $D_{\lambda,n}(w)=0$ for all $w\in R(\lambda)$ which then implies that $c_n(\lambda)$ is independent of $\lambda$. By Liouville's theorem it will suffice to show that the right hand side has the same residues and limit at $x\to \infty$. Clearly the right hand side has simple poles at $x=b/\mathfrak{t}$ for $b\in A(\lambda)$ and the residue is given 
    \[c_{n-1}\frac{b}{\mathfrak{t}}\frac{\prod_{r\in R(\lambda)}(b/\mathfrak{t}-r)}{\prod_{a\in A(\lambda)\backslash\{b\}}(b/\mathfrak{t}-a/\mathfrak{t})}=c_{n-1}\frac{b}{\mathfrak{t}}\frac{\prod_{r\in R(\lambda)}(1-\mathfrak{t}r/b)}{\prod_{a\in A(\lambda)\backslash\{b\}}(1-a/b)}.\]
    Conversely we see using the branching rule that
    \[\omega_{\lambda\cup\{b\}}(b)=C\frac{\prod_{r\in R(\lambda)}(1-\mathfrak{t}r/b)}{\prod_{a\in A(\lambda)\backslash\{b\}}(1-a/b)}\]
    so that the residue of $D_{\lambda,n}(x)$ at $x=b/\mathfrak{t}$ is 
    \[\frac{b\cdot \omega_{\lambda\cup\{b\}}(b)}{\mathfrak{t}C}\cdot c_{n-1}(\lambda\cup\{b\})=\frac{b}{\mathfrak{t}}\frac{\prod_{r\in R(\lambda)}(1-\mathfrak{t}r/b)}{\prod_{a\in A(\lambda)\backslash\{b\}}(1-a/b)}c_{n-1}\]
    which agrees with the right hand side. Lastly, using $|A|=|R|+1$ we can rewrite the right hand side to
    \[c_{n-1}\frac{\prod_{r\in R(\lambda)}(1-r/x)}{\prod_{a\in A(\lambda)}(1-a/\mathfrak{t}x)} \]
    so that when $x\to \infty$ we get 
    \[c_{n-1}\]
    which agrees with 
    \[\lim_{x\to \infty}D_{\lambda,n}(x)=c_{n-1}(\lambda).\]
\end{proof}
\begin{proof}[Proof of \cref{Main theorem}]
Let $p=p_{\underline{d},\underline{\hat{d}}}$. We have by \cref{factorization-existence} that 
\[p_*(\mathcal{O}_{\underline{d}}^{\operatorname{vir}})=p_*(\mathcal{E}_{\underline{d}/\underline{\hat{d}}})\cdot \mathcal{O}_{\underline{\hat{d}}}^{\operatorname{vir}}.\]
For each nested Young diagram of type $\underline{d}$
\[\lambda_{\bullet}=\lambda_{r+1}\supset \lambda_{r-1}\supset \cdots \supset \lambda_1\]
corresponding to a fixed point of $\operatorname{NHilb}^{\underline{d}}(\A^2)$ we then see by \cref{correction} that
\[\operatorname{cont}_{\lambda_\bullet}(\mathcal{E}_{\underline{d}/\underline{\hat{d}}})=P_{\lambda_{r+1}/\lambda_{r}}\]
so that for a nested Young diagram $\lambda_{\bullet}$ of type $\underline{\hat{d}}$ we have
\[\operatorname{cont}_{\lambda_\bullet}p_*(\mathcal{E}_{\underline{d}/\underline{\hat{d}}})=\sum_{\underset{|\mu|-|\lambda_{r}|}{\lambda_{r}\subset \mu}}P_{\mu/\lambda_{r}}=c_{d_r}(\lambda_{r}).\]
By \cref{main combinatorial theorem} this is independent of $\lambda_{r}$, and so in particular $p_*(\mathcal{E}_{\underline{d}/\underline{\hat{d}}})$ is constant. One then explicitly sees that
\[c_{d_r}(\lambda_{r})=c_{d_r}(\varnothing)=\sum_{\mu \vdash d_r}P_{\mu/\varnothing}\]
agrees with the localization formula for $\chi_T(\operatorname{Hilb}^{d_r}(\A^2),\wedge_{-1}^\bullet(\omega_{\A^2}^{[d_r]}))$. Indeed, note first that 
\[\omega_{\A^2}^{[d_r]}=t_1t_2\oo_{\A^2}^{[d_r]}.\]
We have by definition that
\[P_{\mu/\varnothing}=\frac{1}{\wedge^{\bullet}_{-1}(\mathcal{C}_{\varnothing,\mu})}\]
where
\[\mathcal{C}_{\varnothing,\mu}=Q_{\mu}^{\vee}-(1-t_1)(1-t_2)Q_{\mu}^{\vee}Q_{\mu}.\]
Using that
\[T_{\operatorname{Hilb}^{d_r}(\A^2),\mu}^{\vee}=Q_{\mu}^{\vee}+t_1t_2Q_{\mu}-(1-t_1)(1-t_2)Q_{\mu}^{\vee}Q_{\mu}\]
we get
\[\mathcal{C}_{\varnothing,\mu}=T_{\operatorname{Hilb}^{d_r}(\A^2),\mu}^{\vee}-t_1t_2Q_{d_r}=T_{\operatorname{Hilb}^{d_r}(\A^2),\mu}^{\vee}-t_1t_2\mathcal{O}^{[d_r]}_{\A_2,\mu}\]
hence
\[P_{\mu/\varnothing}=\frac{\wedge_{-1}^{\bullet}(t_1t_2\mathcal{O}^{[d_r]}_{\A_2,\mu})}{\wedge^{\bullet}_{-1}(T_{\operatorname{Hilb}^{d_r}(\A^2),\mu}^{\vee})}=\operatorname{cont}_{\mu}(\wedge_{-1}^\bullet t_1t_2\mathcal{O}^{[d_r]}_{\A_2}).\]
We therefore get
\[\sum_{\mu\vdash d_r}P_{\mu/\varnothing}=\sum_{\mu\vdash d_r}\operatorname{cont}_{\mu}(t_1t_2\mathcal{O}^{[d_r]}_{\A_2})=\chi_T(\operatorname{Hilb}^{d_r}(\A^2),t_1t_2\mathcal{O}^{[d_r]}_{\A_2})\]
as desired.
\end{proof}
\bibliographystyle{alpha}
\bibliography{ref}
\end{document}